\newcommand{\Pic}[0]{\operatorname{Pic}}
\def\O{{\mathcal{O}}}
\def\Z{{\mathbb{Z}}}
\def\N{{\mathbb{N}}}
\def\Q{{\mathbb{Q}}}
\def\C{{\mathbb{C}}}
\def\Pic{{\mathrm{Pic}}}
\def\rk{{\mathrm{rk}}}
\newcommand{\vol}{{\rm vol}}
\newtheorem{thm}{Theorem}[section]
\newtheorem{lem}[thm]{Lemma}
\newtheorem{cor}[thm]{Corollary}
\theoremstyle{definition}
\newtheorem{defn}[thm]{Definition}
\newtheorem{rem}[thm]{Remark}
\newtheorem{defn-thm}[thm]{Definition--Theorem}  
\newtheorem{defn-prop}[thm]{Definition--Proposition}  
\newtheorem{defn-lem}[thm]{Definition--Lemma}  
\theoremstyle{remark}
\begin{document}

\title{Nonvanishing for threefolds in characteristic $p>5$}

\author {Chenyang Xu}
\address {Current: MIT, Cambridge, MA 02139, USA}
\address   {Former: Beijing International Center for Mathematical Research,
       Beijing 100871, China}
\email     {cyxu@math.mit.edu, cyxu@math.pku.edu.cn}

\author {Lei Zhang}
\address {Current: School of Mathematical Science, University of Science and Technology of China, Hefei 230026, P.R.China}
\address {Former: School of Mathematics and Information Sciences, Shaanxi Normal
University, Xi'an 710062, China}
\email {zhlei18@ustc.edu.cn, zhanglei2011@snnu.edu.cn}

\maketitle
\begin{abstract}
We prove the Nonvanishing Theorem for threefolds over an algebraically closed field $k$ of characteristic $p >5$.
\end{abstract}

\tableofcontents

\section{Introduction} \label{section:intro}
Starting from \cite{HX15}, the minimal model program for threefolds in positive characteristic $p>5$ has moved forward quickly. Running a minimal model program in this setting is established by  \cite{HX15, CTX15, Birkar16, BW14, HNT17} in increasing generalities. The aim of this paper is to attack the abundance conjecture for minimal threefolds over characteristic $p>5$. Previously, this result was known for $X$ of general type \cite{Birkar16, Xu15} or satisfying $\dim {\rm Pic}^0(X) >0$ \cite{DW16, Zhang16, Zhang17}. The main result in this paper is the following nonvanishing theorem.
\begin{thm}[Non-vanishing theorem]\label{thm-nonvanishing}
Let $X$ be a terminal projective minimal threefold over an algebraically closed field $k$ of characteristic $p > 5$.  Then $\kappa(X, K_X) \ge 0$, that is, nonvanishing holds for $X$.
\end{thm}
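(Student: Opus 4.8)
The plan is to combine minimality with the cases already in the literature to isolate a narrow situation, and then to produce a pluricanonical section by a Riemann--Roch computation fed by a Miyaoka-type positivity inequality. Since $X$ is minimal, $K_X$ is nef, hence pseudoeffective, so $X$ is non-uniruled (for threefolds with $p>5$, uniruledness would contradict pseudoeffectivity of $K_X$). If $K_X$ is big then $X$ is of general type and nonvanishing is \cite{Birkar16, Xu15}; if $\dim \operatorname{Pic}^0(X) > 0$ then nonvanishing is \cite{DW16, Zhang16, Zhang17}. Hence I may assume throughout that $X$ is non-uniruled, that $\dim \operatorname{Pic}^0(X) = 0$, and that the numerical dimension satisfies $0 \le \nu(X, K_X) \le 2$.

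First I would dispose of the case $\nu(X, K_X) = 0$, i.e. $K_X \equiv 0$, where $\kappa(X, K_X) \in \{-\infty, 0\}$ and nonvanishing is equivalent to $K_X$ being torsion in $\operatorname{Pic}(X)$. I would obtain torsion from the structure theory of terminal threefolds with numerically trivial canonical class, using the triviality of $\operatorname{Pic}^0(X)$ together with a Frobenius argument to bound the order of $K_X$.

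For $1 \le \nu(X, K_X) \le 2$ the engine is the singular Riemann--Roch formula. Since $K_X^3 = 0$ whenever $K_X$ is not big, every term of $\chi(X, \mathcal{O}_X(mK_X))$ involving a positive power of $K_X^3$ drops out, and one is left with a contribution linear in $m$ whose slope is proportional to the intersection number $K_X \cdot c_2(X)$ (together with the constant $\chi(X, \mathcal{O}_X)$ and the terminal singularity corrections). The crucial input is the positivity $K_X \cdot c_2(X) \ge 0$. Granting it, $\chi(X, \mathcal{O}_X(mK_X))$ is bounded below by a nonnegative, generically growing, quantity; since $h^3(X, \mathcal{O}_X(mK_X)) = 0$ for $m \ge 2$ and $h^2(X, \mathcal{O}_X(mK_X)) = h^1(X, \mathcal{O}_X((1-m)K_X))$ by Serre duality, a bound on the latter forces $h^0(X, \mathcal{O}_X(mK_X)) > 0$ for a suitable $m$, proving $\kappa \ge 0$.

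The main obstacle is the inequality $K_X \cdot c_2(X) \ge 0$. In characteristic zero this is Miyaoka's consequence of generic semipositivity of $\Omega^1_X$ for non-uniruled $X$, but generic semipositivity genuinely fails in characteristic $p$. The hard part is therefore to supply a characteristic-$p$ substitute: I would study the Harder--Narasimhan filtration of $\Omega^1_X$, or of its Frobenius pullbacks, via Langer's semistability theory, and argue that a destabilizing quotient of negative slope would yield, through the associated $p$-closed foliation and a bend-and-break construction, a rational curve, contradicting non-uniruledness (this is exactly where $p > 5$ and terminality enter). A secondary difficulty is the absence of Kodaira vanishing needed to control the $h^1$ and $h^2$ terms above; I would replace it by Frobenius-based vanishing statements, or reduce the cohomological estimate to the known abundance in dimension two through a fibration adapted to the value of $\nu$.
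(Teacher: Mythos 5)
Your reduction to $0 \le \nu(X,K_X) \le 2$ and $\dim\Pic^0(X)=0$ matches the paper, and the Riemann--Roch computation giving $\chi(\mathcal{O}_X(mK_X)) \ge \chi(\mathcal{O}_X) > 0$ once $K_X\cdot c_2 \ge 0$ is indeed how the paper begins. But your route to the inequality $K_X\cdot c_2\ge 0$ does not work, and this is a genuine gap. You assert that uniruledness of $X$ would contradict pseudoeffectivity of $K_X$ for threefolds in characteristic $p>5$; that is a characteristic-zero statement (of BDPP type) and is not available here, so the planned contradiction from a destabilizing quotient of $\Omega^1_X$ never materializes. The paper does not prove $\rho^*K_X\cdot c_2(Y)\ge 0$ at all. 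It treats $\rho^*K_X\cdot c_2(Y)<0$ as a live case (Theorem \ref{nc2}): there the Harder--Narasimhan, $1$-foliation and bend-and-break machinery you describe produces a covering family of $\rho^*K_X$-trivial rational curves, and this is \emph{not} absurd --- it is used constructively to bound the nef dimension by $2$, to exclude $n(K_X)=2$ via the arithmetic genus of fibers (this is where $p\ge 5$ enters), and then to conclude $\kappa=1$ with $K_X$ semiample. The correct dichotomy is on the sign of $\rho^*K_X\cdot c_2(Y)$, not on uniruledness.

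Even granting $K_X\cdot c_2\ge 0$, the step ``a bound on $h^2$ forces $h^0>0$'' is where essentially all of the paper's difficulty lives, and your proposal has no mechanism for it when $\nu(K_X)=1$. For $\nu(K_X)=2$ a weak surface vanishing (Szpiro, Langer, Tanaka) does kill $h^2(n_0K_X)$ after restricting to a very ample divisor, much as you suggest. But for $\nu(K_X)=1$ no vanishing is available; the paper instead assumes $h^2(nK_X)>0$ and uses a nonzero class in $\operatorname{Ext}^1_{\mathcal{O}_X}(\mathcal{O}_X(nK_X),\omega_X^\bullet[-3])$ to build a rank-two extension $\mathcal{E}$, and then needs: finiteness of local tame fundamental groups of three-dimensional klt singularities (Theorem \ref{thm-finite-cover}), a substitute for Donaldson's theorem describing numerically flat bundles on varieties with pro-$p$ \'etale fundamental group (Theorem \ref{nfvb}), Ekedahl's $\alpha_L$-torsors to handle extensions that split only after Frobenius pullback, and, when the tame fundamental group of $X^{\rm sm}$ is infinite, Hodge--Witt number symmetry to produce sections of $\Omega^2_Y$. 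None of this is replaceable by ``Frobenius-based vanishing statements''; supplying it is the core content of the theorem.
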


Let us recall the proof of this theorem in characteristic zero which is the main theorem in \cite{Miy88a} (see also \cite[Section 9]{Kol92}). It has a different flavour from the proof of the results for running the minimal model program, namely, it involves
more investigation of higher rank bundles (e.g. the tangent bundle), as opposed to focusing on line
bundles (e.g. the canonical bundle). Two of the main tools used in the proof are the pseudo-effectivity of $c_2$ and Donaldson's
theorem on stable bundles on smooth complex projective surfaces. As well known, these results usually do not hold in in positive characteristics. Luckily, the general understanding of the stability of sheaves in positive characteristics has provided us with enough tools to deal with the new positive characteristic phenomenon.

Let us discuss how to treat the first issue: Let $\rho\colon Y\to X$ be a smooth resolution of singularities, i.e., $Y$ is smooth and $\rho$ is a birational morphism isomorphic over the smooth locus $X^{\rm sm}$. The pseudo-effectivity of $c_2$ means $\rho^*K_X\cdot c_2(Y)\ge 0$, which is a starting point in \cite{Miy88a} (clearly the intersection number does not depend on the choice of the smooth resolutions). As we mentioned, this kind of results may not hold in positive characteristics. Nevertheless, it was understood before that the failure of this type of inequalities will usually imply a special geometric structure on $Y$, e.g., a foliation by rational curves. Using this, we can indeed show the full abundance under the assumption that $\rho^*K_X \cdot c_2(Y) <0$.

\begin{thm}[=Theorem \ref{nc2}]\label{case-nc2}
Let $X$ be a terminal projective minimal 3-fold over an algebraically closed field $k$ of characteristic $p \geq 5$. Let $\rho: Y \to X$ be a smooth resolution of singularities. Assume that $\nu(X, K_X) <3$ and $\rho^*K_X \cdot c_2(Y) <0$. Then  $\kappa(K_X) =1$ and $K_X$ is semiample.
\end{thm}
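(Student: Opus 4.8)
The plan is to exploit the hypothesis $\rho^*K_X \cdot c_2(Y) < 0$ to produce a foliation by rational curves on $Y$, following the philosophy that failure of $c_2$-pseudoeffectivity forces special geometry. Concretely, I would first pass to a suitable ample polarization and consider the Harder--Narasimhan filtration of the tangent sheaf $T_Y$ (or, since we are in characteristic $p$, the \emph{Frobenius-destabilized} version, replacing $T_Y$ by its pullback under a high power of Frobenius so that the maximal destabilizing subsheaf is genuinely foliation-like). The numerical input $\rho^*K_X \cdot c_2(Y) < 0$, combined with the Bogomolov-type inequality and the assumption $\nu(X,K_X) < 3$, should force the existence of a subsheaf $\mathcal{F} \subset T_Y$ of positive degree against $\rho^*K_X$; saturating and using the characteristic-$p$ theory of foliations (integrability of $p$-closed subsheaves) I would arrange that $\mathcal{F}$ is a foliation whose general leaf is a rational curve.

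\smallskip

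Second, I would analyze the rational curves sweeping out $Y$ and push them down to $X$. Because $K_X$ is nef (minimal) and $X$ is terminal, the rational curves covering $Y$ map to curves $C \subset X$ with $K_X \cdot C \ge 0$; but the destabilizing/foliation structure forces a \emph{contraction} of the free rational curves in the $K_X$-trivial directions. The upshot I would aim for is that $X$ carries a fibration structure, i.e., there is a morphism (or at least a rational map, to be made into a morphism by the established MMP for threefolds in $p>5$ cited in the introduction) $f\colon X \to Z$ onto a lower-dimensional base whose general fibers are the images of these rational curves, and along which $K_X$ restricts trivially. Controlling $\nu(X,K_X)$ then pins down $\dim Z$.

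\smallskip

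Third, combining $\nu(X,K_X) < 3$ with the fibration, I expect the numerical dimension to be forced to equal exactly $1$: the rational-curve foliation kills one direction, and the remaining positivity of $K_X$ gives a one-dimensional image. To upgrade $\nu = 1$ to $\kappa(K_X) = 1$ and semiampleness, I would run the canonical bundle formula / subadjunction along $f$ to express $K_X$ as a pullback of a divisor on the base curve or surface $Z$, and then invoke abundance in the lower-dimensional base (trivially for curves, and via the established surface/threefold abundance in $p>5$ for the intermediate case). Semiampleness of $K_X$ would follow from semiampleness on $Z$ pulled back through $f$, using the base-point-free results available for threefolds in this characteristic.

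\smallskip

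The main obstacle I anticipate is the \emph{integrability and rational connectedness} step in characteristic $p$: the maximal destabilizing subsheaf of $T_Y$ need not a priori be closed under $p$-th powers, so it need not define a foliation, and even when it does, separability failures can obstruct the standard bend-and-break arguments that produce rational curves. Handling this is exactly where the positive-characteristic theory of stability of sheaves (Frobenius pullbacks, the Langer-type restriction and boundedness theorems) must be brought in to replace the complex-analytic Donaldson-stability input used in \cite{Miy88a}; I expect the bulk of the real work, and the place where the $p > 5$ hypothesis is genuinely used, to lie in showing that the destabilizing subsheaf can be taken $p$-closed with rationally connected leaves.
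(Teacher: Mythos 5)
Your overall philosophy (instability of $T_Y$ from $\rho^*K_X\cdot c_2(Y)<0$, then a foliation, then rational curves, then a fibration) matches the paper, but the proposal breaks down at exactly the points you defer. The central gap is the foliation step: your proposed fix --- replacing $T_Y$ by $F^{e*}T_Y$ so that the maximal destabilizing subsheaf is ``genuinely foliation-like'' --- does not work, because a destabilizing subsheaf of $F^{e*}T_Y$ is not a subsheaf of $T_Y$ and defines no distribution on $Y$, so the bend-and-break theorem for $1$-foliations (\cite[Theorem 2.1]{Lan15}), which requires $\mathcal{F}\subseteq T_Y$, cannot be applied to it. The paper goes in the opposite direction: it proves that $T_Y$ \emph{itself} is unstable, with respect to the degenerate polarization $(D_1,D_2)$ with $D_2=\rho^*K_X$ (not an ample polarization, as you suggest --- Bogomolov's inequality against an ample class extracts nothing from $\rho^*K_X\cdot c_2(Y)<0$). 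This descent of instability is proved by contradiction: if $T_Y$ were semistable with $F^{e*}T_Y$ unstable, the canonical connection of Cartier descent gives nonzero maps $\mathcal{G}\to\mathcal{G}'\otimes\Omega_Y^1$ between Harder--Narasimhan pieces, and a rank-by-rank slope analysis (using tensor-product semistability, valid since $p\geq 5$) yields contradictory inequalities. Only then is the maximal destabilizing subsheaf of $T_Y$ shown to be involutive and $p$-closed, via Hom-vanishing forced by slopes. You correctly flag this as the main obstacle, but the mechanism you propose to resolve it points the wrong way.

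Two later steps would also fail as proposed. First, you build the fibration by running the MMP; the paper deliberately avoids any MMP and instead uses the nef reduction map of \cite{BCE02} (after passing to an uncountable base field and blowing up $Y$), precisely because the threefold MMP is established only for $p>5$, so your route forfeits the $p=5$ case that the statement covers. Second, your claim that ``the numerical dimension is forced to equal exactly $1$'' glosses over the essential case exclusion: the foliation argument only gives nef dimension $n(K_X)\leq 2$, and the case of a surface base $Z$ must be ruled out separately. The paper does this by noting that a general fiber $F$ of $f\colon Y\to Z$ would satisfy $p_a(F)=1$, hence by \cite[Prop. 2.9]{Zhang17} (or \cite[Cor. 1.8]{PW17}) --- valid exactly because $p\geq 5$ --- $F$ is a smooth elliptic curve, contradicting that these fibers contain rational curves. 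This, and not the $p$-closedness of the destabilizing sheaf, is where the characteristic hypothesis genuinely enters; you misplace it. Finally, the descent of $K_X$ along $f$ should not go through a canonical bundle formula (unavailable in characteristic $p$ for relative dimension two): once the base is a curve, \cite[Lemma 5.2]{BW14} gives $\rho^*K_X\equiv f^*D$, and \cite[Theorem 1.1]{Tan15b} upgrades this to $\rho^*K_X\sim_{\mathbb{Q}}f^*D'$, which yields semiampleness directly.
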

We want to note that in the proof of Theorem \ref{case-nc2}, we do not need to run any minimal model program. This is the reason why we can treat the ${\rm char}~k=5$ case.

More precisely, to prove Theorem \ref{case-nc2}, we start with the assumption $\rho^*K_X \cdot c_2(Y) <0$, which is equivalent to $\Delta(T_Y) \cdot \rho^*K_X < 0$ as
$$\rho^*K_X \cdot K_Y^2=K_X^3=0$$
since $\rho$ is isomorphic outside isolated points on $X$.
Using Bogomolov inequality
\cite[Theorem 3.2]{Lan04}, we immediately get that the tangent bundle $T_Y$ is not strongly semistable, and in our situation we can show that $T_Y$ is unstable. Therefore, we can use the results in \cite{Lan15} to get a foliation $\mathcal{F}$ on $Y$, and a family of $\rho^*K_X$-trivial rational curves covering $X$, hence the nef dimension of $K_X$ satisfies $n(X, K_X) \leq 2$. Under the assumption that $n(X, K_X) \leq 2$, the nonvanishing is recently confirmed in \cite{Wit17} in a more general setting. However, in our case, we can verify it by a straightforward argument: Consider the nef reduction map $f: X \dashrightarrow Z$, if $n(X, K_X) = 2$, then by the adjunction formula the generic fiber $F$ of $f$ has $p_a(F) =1$, so the geometric fiber $\bar{F}$ must be smooth since char $k \geq 5$ by \cite[Prop. 2.9]{Zhang17} (or \cite[Cor. 1.8]{PW17}), which contradicts that the reduction of general fibers of $f$ are rational curves; if $n(X, K_X) = 1$, then $f$ is a morphism, and $K_X$ is the pull-back of an ample divisor on $Z$ hence is semiample.

\bigskip

Now let us discuss the proof of Theorem \ref{thm-nonvanishing}.
By \cite{Birkar16, Xu15, Zhang17}, it suffices to prove Theorem \ref{thm-nonvanishing} for $0\leq \nu(K_X) \leq 2$ and $\dim \Pic^0(X) = 0$, and by Theorem \ref{case-nc2} we can assume $\rho^*K_X  \cdot c_2(Y) \ge 0$. In this case, we closely follow the ideas in characteristic 0 firstly developed in \cite{Miy88a} (see also\cite[Secion 9]{Kol92}). However, the proof in \cite{Miy88a} used many ingredients from characteristic zero. Here we sketch our idea on how to find a positive characteristic replacement of Donaldson's theorem of stable vector bundles on smooth complex projective surfaces, and apply it to our proof: 

\medskip

To start, for separating Case (2) and (3) in the proof of  \cite[Theorem 9.0.6]{Kol92}, the finiteness of the local algebraic fundamental group for a klt singularity $x\in X$ is needed in characteristic zero for \cite[Theorem 9.0.6, Case (3)]{Kol92}. We cannot prove this in full generality in characteristic $p>0$ because of the possible wild
ramifications. Nevertheless, the argument in \cite{Xu14} implies that the tame fundamental group $\pi^{\rm loc, t}_1(x\in X)$ is always finite (see Theorem \ref{thm-finite-cover}). Therefore, in positive characteristics, we separate the cases by considering whether the tame fundamental group $\pi^{\rm  t}_1( X^{\rm sm})$ is finite.

Then in the case similar to \cite[Theorem 9.0.6, Case (2)]{Kol92}  in characteristic zero (i.e., when $\pi^{\rm  t}_1( X^{\rm sm})$ is finite), we reduce to the case that $\pi_1^{\rm et}(X^{\rm sm})$ is a pro-$p$-group. We prove Theorem \ref{nfvb} (generalizing a result of \cite[Lemma 8.1]{Lan12} which itself is built on \cite{EM10}). This is  precisely what we  use to replace Donaldson's theorem on stable vector bundles on surfaces over $\C$ (see  \cite[Page 113]{Kol92}). Then we can mimic the characteristic zero argument by applying it to the sheaf $\mathcal{E}$ obtained by an extension induced by some element in $H^2(X, nK_X)^*$  and show that $\mathcal{E}$ (up to some covers) is not strongly stable.  Eventually, by considering the destablizing exact sequence of $F_X^{e*}\mathcal{E}$, we construct sections of $nK_X$. We note that the extension involved may get split after being pulled back by some absolute Frobenius iterations, so further arguments using the construction originally from \cite{Eke88} (see \cite[Section II. 6]{Kol96}) are needed.

\bigskip

There are other smaller technical issues related to the differences between characteristic zero and characteristic $p$.

\begin{itemize}
\item
For the case $\nu(K_X) = 1$ and  when $X^{\rm sm}$ has an infinite tame fundamental group as in \cite[Proof of Theorem 9.0.6, Case (3)]{Kol92}, to follow Miyaoka's proof, we still need to deal with the failure of Hodge symmetry. Instead we will use Hodge-Witt  numbers and the symmetry of such numbers (cf. \cite[Section 6.3]{Ill82} and \cite{Eke86}).
\item
Since it is not known whether terminal singularities are Cohen-Macaulay (this is only proved in large characteristics by \cite{HW17}), dualizing complexes are used.
\item
For the case $\nu(K_X) = 2$, though Kawamata-Viehweg vanishing may fail in positive characteristic, a weaker vanishing theorem for surfaces (see \cite{Szp79, Lan09, Tan15}) is enough for our case.
\end{itemize}

\bigskip

\noindent {\bf Notation and conventions:} we follow \cite{KM98} and \cite{HL10} for the standard terminologies.

\bigskip
\noindent {\bf Acknowledgement:} We would like to thank H\'el\`ene Esnault, C.D. Hacon, Adrian Langer, Hiromu Tanaka, Liang Xiao and Weizhe Zheng for helpful conversations. We are also grateful to the referees for many valuable suggestions. Part of this work was done when the second author visited Fudan University Key Laboratory this summer, he thanks Meng Chen for his invitation. The first author was partially supported by NSFC Fund for Distinguished Young Scholars (No. 11425101). The second author was partially supported by NSFC (No. 11771260).

\section{The case $\rho^*K_X \cdot c_2(Y) <0$}
In this section, we treat the case $\rho^*K_X \cdot c_2(Y) <0$. This phenomenon only appears in positive characteristic. Recall the aimed theorem
\begin{thm}\label{nc2}
Let $X$ be a minimal projective 3-fold over an algebraically closed field $k$ with $\mathrm{char}~ k=p \geq 5$, and let $\rho: Y \to X$ be a smooth resolution of singularities. Assume that $0< \nu(X, K_X) <3$ and $\rho^*K_X \cdot c_2(Y) <0$. Then the Kodaira dimension $\kappa(X, K_X) = 1$ and $K_X$ is semiample.
\end{thm}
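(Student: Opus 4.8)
The plan is to translate the hypothesis $\rho^*K_X\cdot c_2(Y)<0$ into a strong structural statement about the tangent bundle $T_Y$, and then to extract from that structure a fibration on $X$ whose fibers are rational curves along which $K_X$ is trivial. First I would observe, as the introduction indicates, that since $\rho$ is an isomorphism outside finitely many points we have $\rho^*K_X\cdot K_Y^2=K_X^3=0$ (this uses $\nu(X,K_X)<3$, so $K_X^3=0$), and hence the hypothesis is equivalent to the discriminant inequality $\Delta(T_Y)\cdot\rho^*K_X<0$, where $\Delta(T_Y)=2\,\mathrm{rk}\,c_2-(\mathrm{rk}-1)c_1^2$ is the Bogomolov discriminant. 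Since $\rho^*K_X$ is nef, Langer's positive-characteristic Bogomolov inequality (\cite[Theorem 3.2]{Lan04}) forces $T_Y$ to fail strong semistability with respect to $\rho^*K_X$; I would then upgrade this to genuine instability of $T_Y$ in our setting.

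The heart of the argument is the passage from instability of $T_Y$ to geometry. Using \cite{Lan15}, the maximal destabilizing subsheaf of (a suitable Frobenius pullback of) $T_Y$ gives a foliation $\mathcal{F}\subset T_Y$. The key positive-characteristic input is that a foliation arising this way from the failure of the Bogomolov inequality is $p$-closed and has algebraic, indeed rational, leaves: one obtains a family of $\rho^*K_X$-trivial rational curves sweeping out $Y$, and pushing forward, a covering family of $K_X$-trivial rational curves on $X$. This yields the bound $n(X,K_X)\le 2$ on the nef dimension, because the nef reduction map must contract these covering rational curves. I expect this step — controlling the foliation and verifying that its leaves are rational curves on which $\rho^*K_X$ is trivial — to be the main obstacle, since it is exactly where the characteristic-zero techniques break down and where one must invoke the fine structure theory of foliations in characteristic $p$.

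Having reduced to $n(X,K_X)\le 2$, I would finish by the direct case analysis sketched in the introduction rather than appealing to \cite{Wit17}. Consider the nef reduction map $f\colon X\dashrightarrow Z$. If $n(X,K_X)=2$, then $\dim Z=2$ and the general fiber $F$ is a curve; by adjunction and $\nu>0$ the arithmetic genus satisfies $p_a(F)=1$, so by \cite[Prop. 2.9]{Zhang17} (or \cite[Cor. 1.8]{PW17}) the geometric generic fiber $\bar F$ is smooth in characteristic $p\ge 5$, contradicting that the general fibers are (reductions of) rational curves coming from the covering family. Hence $n(X,K_X)=1$. In that case $f$ is actually a morphism onto a curve $Z$, and $K_X$ is the pullback $f^*A$ of an ample $\mathbb{Q}$-divisor $A$ on $Z$; this immediately gives that $K_X$ is semiample and that $\kappa(X,K_X)=\nu(X,K_X)=1$, completing the proof.
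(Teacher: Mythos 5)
Your outline follows the same route as the paper: instability of $T_Y$ from the failure of the Bogomolov inequality, a $1$-foliation from the destabilizing subsheaf, a covering family of $\rho^*K_X$-trivial rational curves giving $n(X,K_X)\le 2$, and then exactly the endgame you describe (excluding $n=2$ via $p_a(F)=1$ and smoothness of the geometric generic fiber, and in the $n=1$ case passing from $\rho^*K_X\equiv f^*D$ to $\rho^*K_X\sim_{\mathbb{Q}}f^*D'$ via \cite[Lemma 5.2]{BW14} and \cite[Theorem 1.1]{Tan15b}). However, the two middle steps you defer are precisely where the work is, and as you state them one of them is not the right mechanism. First, the polarization: when $\nu(K_X)=1$ the form $(\rho^*K_X)^2\cdot(\,\cdot\,)$ vanishes identically, so semistability ``with respect to $\rho^*K_X$'' is vacuous; one must polarize by the pair $(D_1,D_2)=(\rho^*A,\rho^*K_X)$ with $A$ ample (taking $D_1=\rho^*K_X$ only when $\nu=2$), and Langer's inequality (Theorem \ref{pos-of-ss}) still applies because its conclusion $\Delta(E)\cdot D_2\ge 0$ involves only $D_2$. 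Moreover the ``upgrade'' from not strongly semistable to unstable is a genuine argument, not a formality: one takes the smallest $e$ with $F^{e*}T_Y$ unstable, uses the canonical connection (Theorem \ref{can-conn}) to produce a nonzero map $\mathcal{G}\to\mathcal{G}'\otimes\Omega_Y^1$ from the maximal destabilizing piece, and kills it by slope estimates that split into cases on $\mathrm{rk}\,\mathcal{G}$ and invoke Lemma \ref{tensor} — this is one of the places $p\ge 5$ is used.

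Second, the foliation step: the paper neither proves nor needs that $\mathcal{F}$ has algebraic (let alone rational) leaves, and that claim is stronger than what the theory provides. What is actually shown is that the slope inequalities force $\mathrm{Hom}(\wedge^2\mathcal{F},T_Y/\mathcal{F})=\mathrm{Hom}(F_Y^*\mathcal{F},T_Y/\mathcal{F})=0$, so $\mathcal{F}$ is a $1$-foliation by the criterion of Lemma \ref{c-fol}; the rational curves then come from the quantitative bend-and-break for foliations (Theorem \ref{rat-curve}, proved via the purely inseparable quotient $Y\to Y/\mathcal{F}$), applied to complete-intersection curves $C_m\in\rho^*|mK_X+A|^2$ for $m\gg 0$, where $c_1(\mathcal{F})\cdot C_m\gg K_Y\cdot C_m$ in either case $\nu=1,2$; the resulting curves $B_y$ satisfy $L\cdot B_y<1$ for $L=\rho^*r_0K_X$, hence $L\cdot B_y=0$. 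So the curves are produced by bend-and-break, not by integrating the foliation. With these two steps supplied, the rest of your argument is correct and coincides with the paper's.
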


\begin{rem}\label{rem-unstable-T_Y}
Under the situation of Theorem \ref{nc2}, the case $n(K_X)=2$ does not happen. This case is excluded at the end of the proof, using a property of the fibration of curves, which relies on the condition $\mathrm{char}~ k \geq 5$.

We also remind that, our arguments can prove abundance under the situation that $T_Y$ is not strongly semistable w.r.t. $(D_1, D_2)$ where $D_1,D_2$ are introduced at the beginning of Sec. \ref{pf-nc2}. This is a mildly stronger result than Theorem \ref{nc2}. Indeed, by Bogomolov inequality (Theorem \ref{pos-of-ss}), the condition $\rho^*K_X \cdot c_2(Y) <0$ implies that $T_Y$ is not strongly semistable w.r.t. $(D_1, D_2)$.
\end{rem}

\subsection{Stabilities in positive characteristic}
Let $X$ be a smooth projective variety of dimension $n$ over an algebraically closed field $k$ with $\mathrm{char}~ k=p >0$, let $D_1,...,D_{n-1}$ be nef $\mathbb{R}$-Cartier divisors, and let $E$ be a torsion free coherent sheaf on $X$. The \emph{slope} of $E$ with respect to $(D_1,...,D_{n-1})$ is defined as
$$\mu(E) = \frac{c_1(E)\cdot D_1 \cdots D_{n-1}}{\rk~E}.$$
The sheaf $E$ is \emph{semistable} (resp. \emph{stable}) w.r.t. $(D_1,...,D_{n-1})$ if for every nontrivial subsheaf $E' \subset E$
$$\mu(E') \leq \mu(E)~\mathrm{(}\mathrm{resp}. <\mathrm{)}.$$
Recall that $E$ has \emph{Harder-Narasimhan filtration} (\cite[Sec. I.1.3]{HL10})
$$0 = E_0 \subset E_1 \subset E_2 \subset \cdots \subset E_m = E$$
such that the $(E_i/E_{i-1})$s are semistable and $\mu(E_i/E_{i-1}) >\mu(E_{i+1}/E_{i})$.

Denote by $F: X \cong X_1 \to X$ the Frobenius map. Recall that $F^*E$ has a canonical connection $\nabla_{can}: F^*E \to F^*E\otimes \Omega_{X_1}^1$ which is determined by
$$\mathrm{for}~ a \in \O_{X_1}~\mathrm{and}~e \in E,~~ \nabla_{can}: a\otimes e \mapsto da \otimes e,$$
and it yields the Cartier  equivalence of categories between
the category of quasi-coherent sheaves on $X$ and the category of quasi-coherent
$\mathcal{O}_{X_1}$-modules with integrable $k$-connections, whose $p$-curvature is zero (cf. \cite[Theorem 5.1]{Kat70}).

It is well known that in positive characteristic the stability is not preserved by the Frobenius pullback. We say that $E$ is \emph{strongly semistable} (resp. \emph{strongly stable}) w.r.t. $(D_1,...,D_{n-1})$ if for every integer $e\geq 0$ the pullback $F_X^{e*}E$ is semistable (resp. stable).

Let us recall some results on semistable sheaves in positive characteristic.

\begin{lem}\label{tensor}
Let $D_1,...,D_{n-1}$ be ample $\Q$-Cartier divisors. If $E_1$ and $E_2$ are $(D_1, \cdots, D_{n-1})$-semistable sheaves and $p > \rk E_1 + \rk E_2 - 2$, then $E_1\otimes E_2$
is semistable. If $E$ is semistable and $p > i(\rk E - i)$ then $\wedge^i E$ is semistable.
\end{lem}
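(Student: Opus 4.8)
The plan is to reduce to the case of a smooth projective curve and then apply the theory of semistable principal bundles and low-height representations. For the reduction, I would invoke the Mehta--Ramanathan restriction theorem in its multi-polarized form (as developed by Langer): for $m_1,\dots,m_{n-1}\gg 0$ and general members $D_i'\in |m_iD_i|$, the complete intersection $C=D_1'\cap\cdots\cap D_{n-1}'$ is a smooth curve contained in the locus where all the sheaves in sight are locally free, and a torsion-free sheaf $E$ is semistable with respect to $(D_1,\dots,D_{n-1})$ if and only if $E|_C$ is semistable. Here one uses not only that a semistable sheaf restricts to a semistable sheaf, but also that the Harder--Narasimhan filtration restricts to the Harder--Narasimhan filtration (restriction merely rescales every slope by the factor $\prod_i m_i$), so that instability is faithfully detected on the general curve. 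Since $(E_1\otimes E_2)|_C\cong E_1|_C\otimes E_2|_C$ and $(\wedge^iE)|_C\cong \wedge^i(E|_C)$, it suffices to prove both assertions on $C$.

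On the curve $C$ I would use the dictionary between semistable vector bundles and semistable principal $GL_r$-bundles, together with the positive-characteristic analogue of the theorem of Ramanan--Ramanathan on associated bundles, due to Ilangovan--Mehta--Parameswaran: if $P$ is a semistable principal $G$-bundle and $W$ is a rational $G$-representation whose height is strictly smaller than $p$, then the associated bundle $P\times^{G}W$ is again semistable. A vector bundle is semistable exactly when its frame bundle is a semistable $GL_r$-bundle, and the product of two semistable principal bundles is semistable for the product group. Now $V_1\otimes V_2$ is the bundle associated, via the external tensor representation of $GL_{r_1}\times GL_{r_2}$ on $k^{r_1}\otimes k^{r_2}$, to the semistable $GL_{r_1}\times GL_{r_2}$-bundle built from the frame bundles of $V_1$ and $V_2$; the height of this representation is $r_1+r_2-2$. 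Likewise $\wedge^iV$ is associated to the representation $\wedge^i$ of $GL_r$ on $\wedge^ik^r$, whose height is $i(r-i)$. Thus the hypotheses $p>\rk E_1+\rk E_2-2$ and $p>i(\rk E-i)$ are precisely the low-height conditions needed, and the cited theorem gives the semistability of $V_1\otimes V_2$ and of $\wedge^iV$ on $C$. Restricting back through the first step finishes the proof.

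The part I expect to be the real content, rather than bookkeeping, is the curve case: in positive characteristic the tensor product of semistable bundles need not be semistable at all (this is exactly why \emph{strong} semistability is a separate notion), and it is the low-height bound that rescues the statement. Concretely, the two things I would want to verify carefully are (i) the height computations above, i.e.\ that the external tensor representation and the exterior power have heights $r_1+r_2-2$ and $i(r-i)$ respectively, so that the numerology of the hypotheses matches the input theorem, and (ii) that the multi-polarized Mehta--Ramanathan theorem indeed lets me pass back and forth between $X$ and the general complete intersection curve for the possibly non-locally-free sheaves $E_1,E_2$ and their tensor product. The reduction to curves and the principal-bundle formalism are otherwise standard.
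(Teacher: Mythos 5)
Your argument is correct and is, in substance, the proof behind this lemma: the paper itself offers no argument, simply citing \cite[Corollary 1.2 and Remark 1.3]{Lan15}, and the result there rests on exactly the two inputs you describe, namely the (multi-polarized) Mehta--Ramanathan restriction to a general complete-intersection curve together with the Ilangovan--Mehta--Parameswaran low-height theorem for associated bundles of semistable principal bundles. The two points you flagged do check out: pairing the highest weights $e_1\otimes e_1$ and $e_1+\cdots+e_i$ against the sum of positive coroots gives heights $(r_1-1)+(r_2-1)=r_1+r_2-2$ for the external tensor representation of $GL_{r_1}\times GL_{r_2}$ and $i(r-i)$ for $\wedge^i$ of $GL_r$, matching the stated bounds on $p$, and a general complete-intersection curve of high degree misses the codimension $\ge 2$ locus where the torsion-free sheaves fail to be locally free, so both directions of the restriction argument go through.
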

\begin{proof}
See e.g. \cite[Corollary 1.2 and Remark 1.3]{Lan15}.
\end{proof}

\begin{thm}\label{can-conn}
Let $E$ be a semistable torsion-free
sheaf such that $F^*E$ is unstable. Let
$$0 = E_1\subset E_2 \subset \cdots \subset E_m = F^*E$$
be the Harder-Narasimhan filtration. Then the canonical connection $\nabla_{can}$ induces
non-trivial $\O_X$-homomorphisms $E_i \to (F^*E/E_i)\otimes \Omega_X^1$.
\end{thm}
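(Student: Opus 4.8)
The plan is to exploit the failure of the Harder--Narasimhan filtration to be horizontal for $\nabla_{can}$, and to detect that failure through the slope jump caused by the Frobenius pullback. First I would produce the homomorphisms. For each index $i$ consider the composite
$$\phi_i\colon E_i \hookrightarrow F^*E \xrightarrow{\nabla_{can}} F^*E\otimes \Omega_X^1 \twoheadrightarrow (F^*E/E_i)\otimes \Omega_X^1.$$
Although $\nabla_{can}$ is only a connection, not an $\O_X$-linear map, the composite $\phi_i$ \emph{is} $\O_X$-linear: for local sections $f$ of $\O_X$ and $s$ of $E_i$ the Leibniz rule gives $\nabla_{can}(fs)=f\,\nabla_{can}(s)+s\otimes df$, and the correction term $s\otimes df$ lies in $E_i\otimes\Omega_X^1$, hence dies after projecting onto $(F^*E/E_i)\otimes\Omega_X^1$. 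This furnishes the asserted $\O_X$-homomorphism; the content of the theorem is then that $\phi_i$ is nonzero for $2\le i\le m-1$.

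Next I would argue by contradiction. Suppose $\phi_i=0$ for some such $i$. By construction this says precisely that $\nabla_{can}(E_i)\subseteq E_i\otimes\Omega_X^1$, i.e.\ $E_i$ is a $\nabla_{can}$-stable subsheaf, so $(E_i,\nabla_{can}|_{E_i})$ is a subobject in the category of $\O_{X_1}$-modules with integrable connection. Its $p$-curvature is the restriction of that of $(F^*E,\nabla_{can})$, which vanishes. Applying the Cartier equivalence quoted above, together with the fact that an equivalence of abelian categories preserves subobjects, I conclude that $E_i$ descends, namely $E_i\cong F^*G$ for a subsheaf $G\subseteq E$, where $G$ is the sheaf of $\nabla_{can}$-horizontal sections of $E_i$; since $E_i\neq 0$ (as $i\ge 2$), $G$ is a nonzero subsheaf of $E$.

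Finally I would compare slopes with respect to $(D_1,\dots,D_{n-1})$. Because Frobenius multiplies first Chern classes by $p$ while preserving rank, $\mu(F^*G)=p\,\mu(G)$ and $\mu(F^*E)=p\,\mu(E)$. As $E$ is semistable and $G\subseteq E$ is nonzero, $\mu(G)\le\mu(E)$, whence $\mu(E_i)=p\,\mu(G)\le p\,\mu(E)=\mu(F^*E)$. On the other hand the defining inequalities of the Harder--Narasimhan filtration give $\mu(E_i/E_{i-1})>\mu(E_{i+1}/E_i)$, so $\mu(E_i)\ge \mu(E_i/E_{i-1})>\mu(E_{i+1}/E_i)\ge \mu(F^*E/E_i)$; since $c_1(F^*E)=c_1(E_i)+c_1(F^*E/E_i)$ exhibits $\mu(F^*E)$ as a rank-weighted average of $\mu(E_i)$ and $\mu(F^*E/E_i)$, this forces $\mu(E_i)>\mu(F^*E)$. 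The two inequalities contradict one another, so $\phi_i\neq 0$.

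I expect the delicate point to be the descent step: one must confirm that the $\nabla_{can}$-stable subsheaf $E_i$ genuinely carries an integrable connection with vanishing $p$-curvature, and that the Cartier equivalence returns an honest coherent subsheaf $G\subseteq E$ (rather than a mere abstract descent datum), so that $\mu(E_i)$ is literally $p$ times the slope of a subsheaf of $E$. Once this is secured, the slope bookkeeping in the last paragraph is routine, and semistability of $E$ closes the argument.
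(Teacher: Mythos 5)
Your proposal is correct, and it is essentially the proof the paper is relying on: the paper proves Theorem \ref{can-conn} only by citing \cite{Lan04}*{Corollary 2.4}, and the argument there is exactly your chain — $\O_X$-linearity of $\phi_i$ via the Leibniz rule, Cartier descent of a $\nabla_{can}$-stable $E_i$ to a subsheaf $G\subseteq E$ with $\mu(E_i)=p\,\mu(G)\le p\,\mu(E)=\mu(F^*E)$, against the Harder--Narasimhan inequality $\mu(E_i)>\mu(F^*E)$. The descent step you flag as delicate is indeed the crux, and your treatment of it (zero $p$-curvature passes to the $\nabla_{can}$-stable subsheaf, and the horizontal sections give an honest coherent subsheaf of $E$) is the standard and correct resolution.
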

\begin{proof}
See e.g. \cite[Corollary 2.4]{Lan04}.
\end{proof}

\begin{thm}[{\cite[Theorem 3.2]{Lan04}}]\label{pos-of-ss}
Let $E$ be a strongly $(D_1, ..., D_{n-1})$-semistable torsion-free
sheaf. Then
$$\Delta(E) \cdot D_2 \cdots D_{n-1} \geq 0$$
where $\Delta(E) = (\rk E-1) c_1(E)^2 - 2\rk E\cdot c_2(E)$ denotes the discriminant.
\end{thm}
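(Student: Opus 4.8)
The plan is to reduce to the case of surfaces and then run a Frobenius-limit argument, using strong semistability to kill the characteristic-$p$ correction term that obstructs the naive Bogomolov inequality. First I would reduce to $n=2$: cutting $X$ by general members of large multiples of an auxiliary ample divisor (approximating the nef $D_i$ and passing to the limit, which is legitimate since the intersection number $\Delta(E)\cdot D_2\cdots D_{n-1}$ is continuous and multilinear in the $D_i$), one lands on a smooth projective surface $S$, and by a Mehta--Ramanathan-type restriction theorem the restriction of a strongly semistable sheaf to such a general complete intersection is again strongly semistable while $\Delta(E)\cdot D_2\cdots D_{n-1}$ becomes $\Delta(E|_S)$. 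Thus it suffices to prove: if $E$ is strongly semistable on a smooth projective surface $S$, then $\Delta(E)\ge 0$.

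On $S$ the two key inputs are the Frobenius scaling of the discriminant and strong semistability. Since the $e$-th iterate of Frobenius multiplies codimension-$i$ cycle classes by $p^{ie}$, we have $\Delta(F^{e*}E)=p^{2e}\Delta(E)$, and by hypothesis every $F^{e*}E$ is semistable. I would then establish a weak Bogomolov inequality valid for an arbitrary semistable sheaf $V$ of rank $\rk V$ on $S$, of the shape $\Delta(V)\ge -\beta(V)$, and apply it to $V=F^{e*}E$ to get $p^{2e}\Delta(E)\ge -\beta(F^{e*}E)$; dividing by $p^{2e}$ and letting $e\to\infty$ yields $\Delta(E)\ge 0$ provided $\beta(F^{e*}E)=o(p^{2e})$. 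To produce such a bound I would compute, via Riemann--Roch, $\chi(S,\mathcal{E}nd V)=(\rk V)^2\chi(\O_S)-\Delta(V)$ (using $c_1(\mathcal{E}nd V)=0$ and $c_2(\mathcal{E}nd V)=\Delta(V)$), and then bound $h^0(\mathcal{E}nd V)=\dim\Hom(V,V)$ and $h^2(\mathcal{E}nd V)=\dim\Hom(V,V\otimes K_S)$ by the slope constraints coming from semistability of $V$, which forces a lower bound on $\Delta(V)$.

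The main obstacle is making the error term $\beta(F^{e*}E)$ genuinely $o(p^{2e})$, i.e.\ controlling $\dim\Hom(F^{e*}E, F^{e*}E\otimes K_S)$ uniformly enough: a priori $c_1(F^{e*}E)=p^e c_1(E)$ grows, so the slopes entering these $\Hom$-estimates grow like $p^e$ and a crude bound only gives $\beta=O(p^{2e})$, which does not close the limit. This is exactly where strong semistability, through the canonical connection, enters. I cannot simply pass to $\mathcal{E}nd(E)$ and invoke Lemma~\ref{tensor}, since Theorem~\ref{pos-of-ss} carries no lower bound on $p$, so $\mathcal{E}nd(F^{e*}E)$ need not be semistable. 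Instead I would use Theorem~\ref{can-conn}: if some $F^{e*}E$ failed to be semistable, its Harder--Narasimhan factors would be linked by nonzero $\O_S$-homomorphisms $E_i\to (F^{e*}E/E_i)\otimes\Omega_S^1$, forcing the spread $\mu_{\max}(F^{e*}E)-\mu_{\min}(F^{e*}E)$ (the slope gap between the first and last Harder--Narasimhan factors) to be bounded by a multiple of $\mu_{\max}(\Omega_S^1)$; strong semistability makes this spread vanish for every $e$, and feeding this vanishing into the Harder--Narasimhan-refined form of the discriminant inequality kills the growing part of $\beta$. Assembling the uniform bound this way, rather than via tensor-product semistability, is the crux; the reduction to surfaces and the Riemann--Roch bookkeeping are routine by comparison.
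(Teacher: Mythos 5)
The paper does not actually prove this statement: it is imported verbatim from Langer, \cite[Theorem 3.2]{Lan04}, so there is no in-paper argument to measure yours against. Your sketch does reproduce the well-known skeleton of the proof in the literature (the Frobenius rescaling $\Delta(F^{e*}E)=p^{2e}\Delta(E)$, a weak Bogomolov inequality with an error term, division by $p^{2e}$ and a limit), but the two steps you label as routine or handle only impressionistically are precisely where the content of \cite{Lan04} lies, and as written each has a genuine gap.

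First, the reduction to surfaces. A Mehta--Ramanathan statement gives, for each fixed $e$, a degree bound beyond which $(F^{e*}E)|_S$ is semistable, but you need one complete-intersection surface that works for all $e$ simultaneously; the uniform (effective) restriction theorem achieving this is proved in \cite{Lan04} by a joint induction with the very Bogomolov inequality you are trying to establish, so invoking it as a black box is circular. Moreover, your perturbation of the nef $D_i$ to ample classes addresses the wrong continuity: the intersection number is indeed continuous, but strong semistability with respect to the nef collection does \emph{not} imply semistability with respect to $(D_i+\epsilon H)$ (consider $\mathcal{O}(C)\oplus\mathcal{O}(-C)$ with $C\cdot D_1\cdots D_{n-1}=0$ but $C\cdot (D_1+\epsilon H)\cdots(D_{n-1}+\epsilon H)>0$, e.g.\ $C$ a $\rho$-exceptional divisor in the paper's application), so the hypothesis is lost before you reach the surface. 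Second, the uniform bound on $\beta(F^{e*}E)$. The canonical-connection step you describe acts on the Harder--Narasimhan filtration of $F^{e*}E$ itself, which is trivial by hypothesis, so it does no work; what must be controlled is $h^0$ and $h^2$ of $\mathcal{E}nd(F^{e*}E)=F^{e*}\mathcal{E}nd(E)$, whose semistability is exactly what is in question. You rightly observe that Lemma \ref{tensor} is unavailable without a lower bound on $p$, but the standard repair is the Ramanan--Ramanathan theorem (tensor products of strongly semistable sheaves are strongly semistable in every characteristic), which makes $F^{e*}\mathcal{E}nd(E)$ semistable of slope $0$, bounds $h^0$ by $r^2$ and $h^2=h^0\bigl(F^{e*}\mathcal{E}nd(E)\otimes K_S\bigr)$ by a constant independent of $e$, and then your Riemann--Roch limit closes; the alternative is Langer's Harder--Narasimhan-refined discriminant estimate, which is itself the hard core of \cite{Lan04} and cannot simply be cited into existence. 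As it stands, your argument does not close, and for the purposes of this paper the correct move is the one the authors make: quote \cite[Theorem 3.2]{Lan04}.
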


\subsection{Numerical and nef dimension}
Let $X$ be a normal projective variety over an algebraically closed field $k$ and let $D$ be a nef divisor on $X$. The \emph{numerical dimension} $\nu(D)$ is defined as
$$\nu(D) = \max \{k \in \N|D^k\cdot A^{n-k} >0\mathrm{~for~an~ample~divisor}~A~\mathrm{on}~X \}.$$
It is easy to see the definition does not depend on the choice of the ample divisor $A$.

By the main result of \cite{BCE02} (see also \cite[Theorem 2.9]{CTX15}), if $k$ is uncountable, then
there exists a nonempty Zariski open set $U \subset X$ and a proper morphism $f: U \to V$, namely, the \emph{nef reduction map}, such that $D$ is numerically trivial on a very general fibre $F$ of $f$, and for a very general point $x \in U$ and $C$ a curve containing $x$, we have that $D\cdot C = 0$ if and only if $C$ is contained in the fibre of $f$.
The \emph{nef dimension} of $D$ is defined as $n(D) = \dim V$.

\subsection{$1$-foliation}
Let $X$ be a smooth variety over an algebraically closed field $k$ with $\mathrm{char}~ k=p >0$. Recall that a \emph{$1$-foliation} is a saturated subsheaf $\mathcal{F} \subset T_X$ which is involutive (i.e., $[\mathcal{F}, \mathcal{F}] \subset \mathcal{F}$) and $p$-closed (i.e., $\xi^p \in \mathcal{F}, \forall \xi \in \mathcal{F}$).
A $1$-foliation $\mathcal{F}$ induces a finite purely inseparable morphism (cf. \cite{Eke87})
$$\pi: X \to Y = X/\mathcal{F} = \mathrm{Spec} (Ann(\mathcal{F}) = _{\rm defn} \{a \in \O_X| \xi(a) = 0, \forall \xi \in \mathcal{F}\}),$$
and if $\mathcal{F}$ is locally free then $Y$ is smooth and
$$K_{X} \sim \pi^*K_{Y} + (p-1)\det \mathcal{F}|_{X}.$$

We can often get $1$-foliations when the tangent bundle is unstable by using the following criterion.
\begin{lem}\label{c-fol}
Let $\mathcal{F}$ be a saturated $\O_X$-submodule of $T_X$. If
$$\mathrm{Hom}_X(\wedge^2\mathcal{F}, T_X/\mathcal{F}) = \mathrm{Hom}_X(F_X^*\mathcal{F}, T_X /\mathcal{F}) = 0$$
then $\mathcal{F}$ is a 1-foliation.
\end{lem}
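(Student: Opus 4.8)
The plan is to verify directly the two defining properties of a $1$-foliation --- involutivity and $p$-closedness --- by exhibiting each obstruction as an $\O_X$-linear map into $T_X/\mathcal{F}$ which one of the two hypotheses forces to vanish. Everything is local, so I work with local sections and note that the resulting morphisms glue. First I would treat involutivity. Write $\pi\colon T_X\to T_X/\mathcal{F}$ for the projection and consider $\pi\circ[\,\cdot\,,\,\cdot\,]\colon\mathcal{F}\times\mathcal{F}\to T_X/\mathcal{F}$. Although the Lie bracket is only $k$-bilinear, the Leibniz rule $[\xi,f\eta]=f[\xi,\eta]+\xi(f)\eta$ shows that modulo $\mathcal{F}$ the term $\xi(f)\eta$ disappears (as $\eta\in\mathcal{F}$), so this composite is $\O_X$-linear in the second argument; by antisymmetry it is $\O_X$-linear in the first as well, and it is alternating since $[\xi,\xi]=0$. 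By the universal property of the exterior power it therefore factors through an $\O_X$-linear map $\wedge^2\mathcal{F}\to T_X/\mathcal{F}$. The hypothesis $\mathrm{Hom}_X(\wedge^2\mathcal{F},T_X/\mathcal{F})=0$ forces this map to be zero, i.e. $[\mathcal{F},\mathcal{F}]\subset\mathcal{F}$, so $\mathcal{F}$ is involutive.

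Next I would establish $p$-closedness using the restricted Lie algebra structure on $T_X=\mathrm{Der}_k(\O_X)$, where $\xi^p$ denotes the $p$-fold composite of the derivation $\xi$ (again a derivation in characteristic $p$). Composing with $\pi$ gives a map $\mathcal{F}\to T_X/\mathcal{F}$, $\xi\mapsto\pi(\xi^{p})$, and I claim it is both $p$-semilinear and additive. For semilinearity, Hochschild's formula $(f\xi)^{p}=f^p\xi^{p}+(f\xi)^{p-1}(f)\cdot\xi$ shows that modulo $\mathcal{F}$ (where the last term lies, since $\xi\in\mathcal{F}$ and $\mathcal{F}$ is an $\O_X$-module) one has $\pi((f\xi)^{p})=f^p\,\pi(\xi^{p})$. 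For additivity, Jacobson's formula expresses $(\xi+\eta)^{p}-\xi^{p}-\eta^{p}$ as a sum of iterated Lie brackets of $\xi$ and $\eta$; by the involutivity just proved all of these brackets lie in $\mathcal{F}$, so the difference vanishes modulo $\mathcal{F}$. A map $\mathcal{F}\to T_X/\mathcal{F}$ that is additive and $p$-semilinear is precisely the datum of an $\O_X$-linear map $F_X^*\mathcal{F}\to T_X/\mathcal{F}$ under adjunction for the absolute Frobenius. The hypothesis $\mathrm{Hom}_X(F_X^*\mathcal{F},T_X/\mathcal{F})=0$ then forces $\pi(\xi^{p})=0$, i.e. $\xi^{p}\in\mathcal{F}$ for every local section $\xi$ of $\mathcal{F}$. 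Together with involutivity this shows that $\mathcal{F}$ is a $1$-foliation.

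I expect the step requiring the most care to be the additivity of the $p$-th power map modulo $\mathcal{F}$: this is where the logical order is essential, since it genuinely uses the involutivity established first (the correction terms in Jacobson's formula are iterated brackets, which only land in $\mathcal{F}$ once $\mathcal{F}$ is known to be closed under the bracket). The remaining bookkeeping --- identifying a $p$-semilinear additive map $\mathcal{F}\to T_X/\mathcal{F}$ with an $\O_X$-linear map out of $F_X^*\mathcal{F}$ --- is formal but must be invoked explicitly in order to match the second hypothesis.
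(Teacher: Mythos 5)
Your proof is correct and is exactly the standard argument that the paper delegates to its references (Ekedahl's Lemma 4.2, Koll\'ar et al.\ 9.1.2.2, Langer's Lemma 1.5): the bracket obstruction is $\O_X$-bilinear alternating modulo $\mathcal{F}$ by the Leibniz rule, and the $p$-th power obstruction is additive and Frobenius-semilinear modulo $\mathcal{F}$ by Hochschild's and Jacobson's formulas, hence each is killed by the corresponding Hom-vanishing hypothesis. You also correctly flag the one point of logical order, namely that additivity of $\xi\mapsto\pi(\xi^p)$ requires the involutivity established first.
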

\begin{proof}
See e.g. \cite[9.1.2.2]{Kol92}, \cite[Lemma 4.2]{Eke87} or \cite[Lemma 1.5]{Lan15}.
\end{proof}

By studying the purely inseparable morphisms induced from $1$-foliations, one can prove bend-and-break type results for foliations.

\begin{thm}\textup{(\cite[Theorem 2.1]{Lan15})}\label{rat-curve}
Let $L$ be a nef $\mathbb{R}$-divisor on $X$. Let $f : C \to X$ be a non-constant morphism
from a smooth projective curve $C$ such that $X$ is smooth along $f(C)$. Let $\mathcal{F} \subseteq T_X$ be a
$1$-foliation, smooth along $f(C)$. Assume that
$$c_1(\mathcal{F})\cdot C > \frac{K_X\cdot C}{p-1}.$$
Then for every $x \in f(C)$ there is a rational curve $B_x \subseteq X$ passing through $x$ such that
$$L \cdot B_x \leq 2\dim X \frac{pL \cdot C}{(p-1)c_1(\mathcal{F})\cdot C-K_X\cdot C}.$$
\end{thm}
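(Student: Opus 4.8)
The plan is to quotient by the foliation and transplant Mori's bend-and-break to the quotient, where the hypothesis forces the anticanonical degree of the image curve to be positive. First I would form the finite purely inseparable quotient $\pi\colon X\to Y=X/\mathcal{F}$ induced by the $1$-foliation. Since $\mathcal{F}$ is smooth, hence locally free, along $f(C)$, the variety $Y$ is smooth along $\pi(f(C))$ and the canonical bundle formula $K_X\sim \pi^*K_Y+(p-1)\det\mathcal{F}$ holds along $f(C)$. Two structural facts about $\pi$ will be used. It has height one: it factors the relative Frobenius, $\pi'\circ\pi=F_{X/k}$ for some $\pi'\colon Y\to X^{(1)}$; in particular, for any integral curve $B\to\pi(B)$ one has $k(B)^p\subseteq k(\pi(B))$, so the relative degree $\deg(B/\pi(B))$ is at most $p$. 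This bound on the relative degree is exactly what will produce the factor $p$ in the final estimate.

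Next I would set $g=\pi\circ f\colon C\to Y$ and read off the two intersection numbers that govern bend-and-break. By the projection formula and the canonical bundle formula,
$$-K_Y\cdot g_*C=-\pi^*K_Y\cdot f_*C=(p-1)\,c_1(\mathcal{F})\cdot C-K_X\cdot C,$$
which is \emph{positive} precisely by the hypothesis $c_1(\mathcal{F})\cdot C>\frac{K_X\cdot C}{p-1}$. To keep track of $L$ I would choose on $Y$ the nef divisor $H=(\pi')^*L^{(1)}$, where $L^{(1)}$ is the Frobenius twist of $L$ on $X^{(1)}$; then $H$ is nef as the pullback of a nef divisor by the finite morphism $\pi'$, and $\pi^*H=F_{X/k}^*L^{(1)}=pL$, so that $H\cdot g_*C=\pi^*H\cdot f_*C=p\,L\cdot C$.

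Now I would run Mori's bend-and-break for $g\colon C\to Y$ relative to the nef divisor $H$. As $-K_Y\cdot g_*C>0$, through the point $\pi(x)\in Y$ there is a rational curve $B'\subseteq Y$ with
$$H\cdot B'\le 2\dim Y\cdot\frac{H\cdot g_*C}{-K_Y\cdot g_*C}=2\dim X\cdot\frac{p\,L\cdot C}{(p-1)\,c_1(\mathcal{F})\cdot C-K_X\cdot C}.$$
Finally I would descend: let $B_x=(\pi^{-1}B')_{\mathrm{red}}$, an integral rational curve (a purely inseparable image or preimage of a rational curve is rational) passing through $x$ because $\pi$ is a homeomorphism. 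Writing $\delta=\deg(B_x/B')\le p$ by height one and using $\pi^*H=pL$, one gets $pL\cdot B_x=\pi^*H\cdot B_x=\delta\,(H\cdot B')$, whence $L\cdot B_x=\frac{\delta}{p}\,H\cdot B'\le H\cdot B'$; combining with the displayed bound yields the assertion.

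The heart of the argument, and the step I expect to be the main obstacle, is the bend-and-break on $Y$. In characteristic $p$ the required family of maps fixing $\pi(x)$ does not follow from $-K_Y\cdot g_*C>0$ alone; one manufactures it by precomposing $g$ with iterates of the Frobenius $F_C$ of $C$, which multiplies both $H\cdot g_*C$ and $-K_Y\cdot g_*C$ by the same power of $p$ while leaving $g(C)$ and the genus of $C$ fixed. This pushes $\dim_{[g]}\mathrm{Mor}(C,Y)$ above the codimension of the point condition, forcing a degeneration whose limiting cycle contains a rational curve, and—because numerator and denominator scale identically—leaves the stated ratio untouched. The delicate point is to ensure that the deforming curves and the limiting rational curve remain in the locus where $Y$ is smooth and $K_Y$ is Cartier; this is exactly what the hypothesis that $\mathcal{F}$ be smooth along $f(C)$, so that $Y$ is smooth along $\pi(f(C))$, is there to guarantee.
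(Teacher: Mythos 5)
The paper does not prove this statement; it is quoted verbatim from \cite[Theorem 2.1]{Lan15}, and your argument is essentially Langer's own proof: pass to the quotient $\pi\colon X\to Y=X/\mathcal{F}$, use $K_X=\pi^*K_Y+(p-1)\det\mathcal{F}$ to see $K_Y\cdot\pi_*f_*C<0$, apply bend-and-break \cite[II.5.8]{Kol96} on $Y$ with the nef divisor $(\pi')^*L^{(1)}$ (whose pullback to $X$ is $pL$), and descend the rational curve through the universal homeomorphism $\pi$ using $\deg(B_x/B')\le p$. The argument is correct, including the delicate points (degree bound via the factorization of relative Frobenius, and the Frobenius-twist version of bend-and-break in characteristic $p$).
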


\subsection{Proof of Theorem \ref{nc2}}\label{pf-nc2}
Let $H$ be an ample divisor on $Y$.
We set the nef divisors
$$D_1 = \rho^*A~ \mathrm{if} ~\nu(K_X) = 1~ \mathrm{or} ~D_1 = \rho^*K_X~ \mathrm{if} ~\nu(K_X) = 2, \mbox{\ \  and \ \ } D_2 = \rho^*K_X$$
where $A$ is an ample divisor on $X$.
Then $D_1 \cdot D_2$ is nontrivial.
Denote by $\mu(\mathcal{G})$ the slope of a coherent sheaf $\mathcal{G}$ on $Y$ with respect to $(D_1,D_2)$ and by $\mu_{\epsilon}(\mathcal{G})$ the slope with respect to $(D_1 + \epsilon H,D_2 + \epsilon H)$.
Before giving the proof let us recall some well-known facts:
\begin{itemize}
\item
the Harder-Narasimhan filtration of a coherent sheaf with respect to $(D_1 + \epsilon H,D_2 + \epsilon H)$ is independent of sufficiently small $\epsilon >0$ (\cite[p. 263]{Lan04});
\item
if a sheaf $\mathcal{E}$ is $\mu_{\epsilon}$-semistable for sufficiently small $\epsilon > 0$ then it is $\mu$-semistable and
$\mu(\mathcal{E}) = \lim_{\epsilon \to 0} \mu_{\epsilon}(\mathcal{E})$; and
\item
for two torsion free sheave $\mathcal{E}_1, \mathcal{E}_2$ on $Y$, if $\mu_{\min}(\mathcal{E}_1) > \mu_{\max}(\mathcal{E}_2)$ then
$${\rm Hom}_Y(\mathcal{E}_1, \mathcal{E}_2) = 0.$$
\end{itemize}

Since $K_Y\cdot D_1\cdot D_2 = 0$ and the exceptional locus is contracted to those isolated singularities on $X$, it follows that
$$\mu(T_Y) = \mu(\Omega^1_Y) = 0~\mathrm{and}~\Delta(T_Y)\cdot \rho^*K_X = 6c_2(Y)\cdot \rho^*K_X.$$

\begin{lem}
We have $T_Y$ is not $\mu$-semistable.
\end{lem}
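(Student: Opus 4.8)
The plan is to derive the instability of $T_Y$ from the hypothesis $\rho^*K_X \cdot c_2(Y) < 0$ by combining the Bogomolov-type inequality with the canonical-connection structure theorem. First I would argue by contradiction: suppose $T_Y$ is $\mu$-semistable with respect to $(D_1, D_2)$. The key numerical input is the identity just recorded, namely that $\mu(T_Y) = 0$ and $\Delta(T_Y)\cdot \rho^*K_X = 6 c_2(Y)\cdot \rho^*K_X$, so the assumption $\rho^*K_X\cdot c_2(Y) < 0$ is precisely the statement $\Delta(T_Y)\cdot D_2 < 0$. By Theorem \ref{pos-of-ss} (the Bogomolov inequality of \cite{Lan04}), a strongly semistable sheaf satisfies $\Delta(E)\cdot D_2 \geq 0$; hence $T_Y$ cannot be strongly semistable. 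This already shows that $F^{e*}_Y T_Y$ is unstable for some $e \geq 0$.

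The remaining task is to upgrade ``not strongly semistable'' to the stronger ``not semistable.'' I would take the smallest $e \geq 0$ for which $F^{e*}_Y T_Y$ becomes unstable, so that $F^{(e-1)*}_Y T_Y$ is still semistable (or $T_Y$ itself if $e = 0$). Setting $E := F^{(e-1)*}_Y T_Y$, this $E$ is semistable but $F^*_Y E$ is unstable, so Theorem \ref{can-conn} applies: the canonical connection $\nabla_{can}$ produces a nontrivial $\O_Y$-homomorphism from a step of the Harder-Narasimhan filtration of $F^*_Y E$ into $(F^*_Y E / E_i)\otimes \Omega^1_Y$. Since $F^*_Y E$ is a Frobenius pullback of a pullback of $T_Y$, this $\Omega^1_Y$-valued map links the positive-slope destabilizing piece to a quotient twisted by $\Omega^1_Y$, which has slope $\mu(\Omega^1_Y) = 0$. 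The strategy is to use this map, together with $\mu(\Omega^1_Y) = 0$ and the fact that the destabilizing subsheaf has strictly positive slope, to force a contradiction with semistability back at the bottom of the Frobenius tower — thereby showing $e = 0$ is already forced, i.e. $T_Y$ itself must be unstable.

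I expect the main obstacle to be this last descent step: translating the existence of the $\Omega^1_Y$-twisted homomorphism into an actual destabilizing subsheaf of $T_Y$ at the base level. The difficulty is that a priori the Bogomolov inequality only obstructs strong semistability, and passing from an instability that appears after some positive number of Frobenius twists down to instability of $T_Y$ requires controlling how the slopes of the Harder-Narasimhan pieces evolve under Frobenius. The clean way to do this is to use the rigidity coming from $\mu(\Omega^1_Y) = 0$: if $E$ were semistable with $\mu(E) = 0$ and $F^*_Y E$ had a destabilizing piece of slope $> 0$ mapping nontrivially into a quotient of slope $\leq \mu(\Omega^1_Y) = 0$, the Hom-vanishing principle recalled before the lemma (that $\mu_{\min}(\E_1) > \mu_{\max}(\E_2)$ forces $\mathrm{Hom}_Y(\E_1,\E_2) = 0$) would be contradicted unless the filtration already descends, forcing the instability to be present at level $e = 0$. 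Making this slope bookkeeping precise, while keeping track of the $\epsilon$-perturbation so that the Harder-Narasimhan filtration is well-defined with respect to the boundary divisors $(D_1,D_2)$, is where the real work lies.
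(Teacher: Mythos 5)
Your overall strategy coincides with the paper's: Bogomolov's inequality (Theorem \ref{pos-of-ss}) rules out strong semistability, one takes the smallest $e$ with $F_Y^{e*}T_Y$ unstable, applies Theorem \ref{can-conn} to obtain a nonzero map $\mathcal{G}\to\mathcal{G}'\otimes\Omega_Y^1$ from the maximal destabilizing subsheaf to the quotient, and aims to contradict the Hom-vanishing principle. However, there is a genuine gap at exactly the point you flag as ``where the real work lies.'' The Hom-vanishing principle requires $\mu_{\min}(\mathcal{G})>\mu_{\max}(\mathcal{G}'\otimes\Omega_Y^1)$, whereas what you control is only $\mu(\mathcal{G}'\otimes\Omega_Y^1)=\mu(\mathcal{G}')+\mu(\Omega_Y^1)=\mu(\mathcal{G}')<0$. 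In characteristic $p$ the tensor product of semistable sheaves need not be semistable, so $\mu_{\max}(\mathcal{G}'\otimes\Omega_Y^1)$ can a priori be strictly larger than $\mu(\mathcal{G}')$, and nothing in your argument bounds it below $\mu(\mathcal{G})$. Your phrase ``mapping nontrivially into a quotient of slope $\le 0$'' conflates the slope of $\mathcal{G}'\otimes\Omega_Y^1$ with the maximal slope of its Harder--Narasimhan filtration, and the latter is precisely the quantity that must be controlled.

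The paper closes this gap with a case analysis on ranks. When the relevant factor has rank one, its tensor with a semistable sheaf is automatically semistable, so $\mu_{\max}=\mu$ and the contradiction is immediate. When both factors have rank at least $2$, one invokes Langer's theorem that a tensor product of semistable sheaves is semistable provided $p>\rk E_1+\rk E_2-2$ (Lemma \ref{tensor}); since the ranks here are at most $2$ and $3$, this is exactly where the hypothesis $p\ge 5$ enters the lemma --- your proposal never uses it. Moreover, Lemma \ref{tensor} is stated for ample polarizations, so one must pass to the perturbed polarization $(D_1+\epsilon H,\,D_2+\epsilon H)$; and when $\mathcal{G}'$ or $\Omega_Y^1$ fails to be semistable for the perturbed polarization, one applies the Hom-vanishing to each rank-one graded piece of its Harder--Narasimhan filtration separately, using that $\Omega_Y^1$ is $(D_1,D_2)$-semistable (being the dual of $T_Y$, which is assumed semistable for contradiction) to pin the slopes of those pieces to $0$. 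Without these steps the descent from ``unstable after $e$ Frobenius pullbacks'' to ``unstable at $e=0$'' does not go through.
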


\begin{proof} Applying Theorem \ref{pos-of-ss}, the assumption $c_2(Y)\cdot \rho^*K_X < 0$ implies that $T_Y$ is not strongly $\mu$-semistable.

We argue by contradiction. Assume $T_Y$ is $\mu$-semistable, and for some smallest $e>0$, $F_Y^{e*}T_Y$ is not $\mu$-semistable.
Let $\mathcal{G}$ be the maximal destablizing subsheaf of $F^{e*}T_Y$ and let $\mathcal{G}' = F^{e*}T_Y/\mathcal{G}$. Then
$$\mu(\mathcal{G}) >0,~\mu(\mathcal{G}') < 0~\mbox{and}~\mu_{\max}(\mathcal{G}') < \mu(\mathcal{G}).$$
Applying Lemma \ref{can-conn}, there exists a nontrivial $\O_Y$-module homomorphism
$$\eta: \mathcal{G} \to \mathcal{G}'\otimes \Omega_Y^1.$$
By the definition of $\mathcal{G}$, it is semistable, thus we have
$$\mu(\mathcal{G}) = \mu_{\min}(\mathcal{G}) \leq \mu_{\max}(\mathcal{G}' \otimes \Omega_Y^1).$$

Case $\rk~\mathcal{G} = 2$: In this case $\rk~\mathcal{G}' = 1$, thus $\mathcal{G}'\otimes \Omega_Y^1$ is $\mu$-semistable. We get a contradiction by
\begin{eqnarray*}\label{e-dest}
\mu(\mathcal{G}) \leq \mu_{\max}(\mathcal{G}'\otimes \Omega_Y^1) = \mu(\mathcal{G}'\otimes \Omega_Y^1) = \mu(\mathcal{G}') + \mu(\Omega_Y^1) = \mu(\mathcal{G}') < \mu(\mathcal{G}).
\end{eqnarray*}
Case $\rk~\mathcal{G} = 1$:
If $\mathcal{G}'\otimes \Omega_Y^1$ is $\mu$-semistable, then as $\mu_{\max}(\mathcal{G}'\otimes \Omega_Y^1) = \mu(\mathcal{G}'\otimes \Omega_Y^1) $, we get the same contradiction as above.
Thus we may assume that $\mathcal{G}'\otimes \Omega_Y^1$ is not $\mu$-semistable, hence for sufficiently small $\epsilon >0$, $\mathcal{G}'\otimes \Omega_Y^1$ is not $(D_1 + \epsilon H, D_{2}+ \epsilon H)$-semistable. Applying Lemma \ref{tensor}, since $p\ge 5$, we have that at least one of $\mathcal{G}'$ and $\Omega_Y^1$ is not $(D_1 + \epsilon H, D_{2}+ \epsilon H)$-semistable.

If $\mathcal{G}'$ is not $(D_1 + \epsilon H, D_{2}+ \epsilon H)$-semistable, consider the Harder-Narasimhan filtration for sufficiently small $\epsilon >0$
$$0 \to \mathcal{G}'_1 \to \mathcal{G}' \to \mathcal{G}'_2 \to 0.$$
By $\mu_{\epsilon}(\mathcal{G}'_1) > \mu_{\epsilon}(\mathcal{G}'_2)$, taking limit we get
$$\mu_{\max}(\mathcal{G}') \geq \mu(\mathcal{G}'_1) \geq \mu(\mathcal{G}'_2).$$
Since $\rk~ \mathcal{G}'_i = 1$, the sheaves $\mathcal{G}'_i \otimes \Omega_Y^1$ are $\mu$-semistable, and
$$\mu(\mathcal{G}_i'\otimes \Omega_Y^1) = \mu(\mathcal{G}_i') + \mu(\Omega_Y^1) = \mu(\mathcal{G}_i') < \mu(\mathcal{G}).$$
It follows that ${\rm Hom}_Y(\mathcal{G}, \mathcal{G}'_i\otimes \Omega_Y^1) = 0$. We can conclude that ${\rm Hom}_Y(\mathcal{G}, \mathcal{G}'\otimes \Omega_Y^1) = 0$, which is a contradiction.

\medskip

For the remaining case, we assume $\mathcal{G}'$ is semistable while $\Omega_Y^1$ is not semistable with respect to $(D_1 + \epsilon H, D_{2}+ \epsilon H)$. Consider the Harder-Narasimhan filtration of $\Omega_Y^1$ with respect to $(D_1 + \epsilon H, D_{2}+ \epsilon H)$
$$0 = \mathcal{E}_0 \subset \mathcal{E}_1 \subset \cdots \subset \mathcal{E}_k = \Omega_Y^1$$
where $k=2$ or $3$. Let $\mathcal{G}_i = \mathcal{E}_i/\mathcal{E}_{i-1}$. Applying Lemma \ref{tensor}, the sheaf $\mathcal{G}_i \otimes \mathcal{G}'$ is $(D_1 + \epsilon H, D_{2}+ \epsilon H)$ (hence $(D_1, D_2)$)-semistable. Since $\Omega_Y^1$ is semistable with respect to $(D_1,D_2)$, we in fact can show that
$$\mu(\mathcal{G}_i) = \mu(\Omega_Y^1) = 0,~\mathrm{thus}~\mu_{\max}(\mathcal{G}_i \otimes \mathcal{G}') = \mu(\mathcal{G}_i \otimes \mathcal{G}')= \mu(\mathcal{G}') < \mu(\mathcal{G}).$$
Therefore ${\rm Hom}_Y(\mathcal{G}, \mathcal{G}_i \otimes \mathcal{G}') = 0$, and this implies ${\rm Hom}_Y(\mathcal{G}, \mathcal{G}'\otimes \Omega_Y^1) = 0$. We get a contradiction again.
\end{proof}

Let $\mathcal{F} \subset T_Y$ be the maximal destablizing subsheaf, and let $\mathcal{G} = T_Y/\mathcal{F}$. Then $c_1(\mathcal{F}) \cdot D_1\cdot  D_2 >0$.
\begin{lem}\label{l-foliation}
The subsheaf $\mathcal{F} \subset T_Y$ yields a $1$-foliation.
\end{lem}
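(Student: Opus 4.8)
The plan is to verify the two $\mathrm{Hom}$-vanishing conditions of Lemma \ref{c-fol} for $\mathcal{F}$, namely $\mathrm{Hom}_Y(\wedge^2\mathcal{F}, \mathcal{G}) = \mathrm{Hom}_Y(F_Y^*\mathcal{F}, \mathcal{G}) = 0$ with $\mathcal{G} = T_Y/\mathcal{F}$. First note that $\mathcal{F}$, being the maximal destabilizing subsheaf of $T_Y$, is automatically saturated and semistable, so it satisfies the standing hypothesis of Lemma \ref{c-fol}, and it has $\mu(\mathcal{F}) = \mu_{\min}(\mathcal{F}) > \mu_{\max}(\mathcal{G})$; moreover $c_1(\mathcal{F})\cdot D_1\cdot D_2 > 0$ gives $\mu(\mathcal{F}) > 0$. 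Since $T_Y$ has rank $3$, either $\rk\mathcal{F} = 1$ or $\rk\mathcal{F} = 2$, and I would treat these separately, in each case reducing the vanishing to a slope comparison via the third bullet recalled at the start of this subsection (vanishing of $\mathrm{Hom}$ once $\mu_{\min}$ of the source exceeds $\mu_{\max}$ of the target).

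For the $\wedge^2\mathcal{F}$ condition: if $\rk\mathcal{F} = 1$ then $\wedge^2\mathcal{F} = 0$ and there is nothing to check; if $\rk\mathcal{F} = 2$ then $\wedge^2\mathcal{F} = \det\mathcal{F}$ is a line bundle of slope $2\mu(\mathcal{F}) > \mu(\mathcal{F}) > \mu_{\max}(\mathcal{G})$, whence $\mathrm{Hom}_Y(\wedge^2\mathcal{F},\mathcal{G}) = 0$. Likewise, when $\rk\mathcal{F} = 1$ the sheaf $F_Y^*\mathcal{F}$ is a line bundle of slope $p\mu(\mathcal{F}) > \mu(\mathcal{F}) > \mu_{\max}(\mathcal{G})$, so $\mathrm{Hom}_Y(F_Y^*\mathcal{F}, \mathcal{G}) = 0$.

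The one genuinely delicate point, and the main obstacle, is the $F_Y^*\mathcal{F}$ condition when $\rk\mathcal{F} = 2$ (so $\rk\mathcal{G} = 1$ and $\mu(\mathcal{G}) = -2\mu(\mathcal{F})$), precisely because Frobenius pull-back need not preserve semistability, so one cannot simply read off $\mu_{\min}(F_Y^*\mathcal{F}) = p\mu(\mathcal{F})$. If $F_Y^*\mathcal{F}$ is semistable, then $\mu_{\min}(F_Y^*\mathcal{F}) = p\mu(\mathcal{F}) > 0 > \mu(\mathcal{G})$ and we are done. Otherwise I would control the instability using Theorem \ref{can-conn}: the Harder--Narasimhan filtration of $F_Y^*\mathcal{F}$ is $0 \subset L_1 \subset F_Y^*\mathcal{F}$ with line-bundle factors $L_1$ and $L_2 = F_Y^*\mathcal{F}/L_1$ satisfying $\mu(L_1) + \mu(L_2) = 2p\mu(\mathcal{F})$, and the canonical connection yields a nonzero map $L_1 \to L_2\otimes\Omega^1_Y$, so that $\mu(L_1) \leq \mu(L_2) + \mu_{\max}(\Omega^1_Y)$. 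Combining this with the identity $\mu_{\max}(\Omega^1_Y) = -\mu_{\min}(T_Y) = -\mu(\mathcal{G}) = 2\mu(\mathcal{F})$ (the duality of Harder--Narasimhan filtrations on the smooth $Y$) forces $\mu(L_1) \leq (p+1)\mu(\mathcal{F})$, and hence $\mu_{\min}(F_Y^*\mathcal{F}) = \mu(L_2) \geq (p-1)\mu(\mathcal{F}) > 0 > \mu(\mathcal{G})$. In either subcase $\mu_{\min}(F_Y^*\mathcal{F}) > \mu_{\max}(\mathcal{G})$, giving the desired vanishing. With both $\mathrm{Hom}$-groups shown to vanish, Lemma \ref{c-fol} applies and $\mathcal{F}$ is a $1$-foliation; the crux is absorbing the failure of Frobenius to preserve semistability into the instability estimate furnished by the canonical connection.
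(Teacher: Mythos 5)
Your proposal is correct and follows essentially the same route as the paper: both cases reduce to slope comparisons via Lemma \ref{c-fol}, and the delicate subcase ($\rk\mathcal{F}=2$ with $F_Y^*\mathcal{F}$ unstable) is handled exactly as in the paper by applying Theorem \ref{can-conn} to get a nonzero map between the Harder--Narasimhan factors twisted by $\Omega_Y^1$ and using $\mu_{\max}(\Omega_Y^1)=-\mu(\mathcal{G})$. Your explicit bounds $\mu(L_2)\geq (p-1)\mu(\mathcal{F})$ are just a rearrangement of the paper's inequality $\mu(\mathcal{F}_2)\geq\mu(\mathcal{F}_1)+\mu(\mathcal{G})$.
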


\begin{proof}

If $\rk~\mathcal{F} = 1$, then $\wedge^2\mathcal{F} = 0$. And since $\mu_{\max}(\mathcal{G}) < \mu (\mathcal{F}) < \mu(F_Y^*\mathcal{F})$, we have
${\rm Hom}_Y(F_Y^*\mathcal{F}, \mathcal{G}) = 0$. It follows that $\mathcal{F}$ is a foliation by Lemma \ref{c-fol}.

\medskip

If $\rk~\mathcal{F} = 2$, then $\rk~\mathcal{G} = \rk~\wedge^2\mathcal{F} = 1$ and
$$\mu(\wedge^2\mathcal{F}) = c_1(\mathcal{F}) \cdot D_1\cdot D_2 > c_1(\mathcal{G})\cdot D_1\cdot D_2,$$
thus ${\rm Hom}_Y(\wedge^2\mathcal{F}, \mathcal{G}) = 0$. It remains to prove that ${\rm Hom}_Y(F_Y^*\mathcal{F}, \mathcal{G}) = 0$.
Since $\mu(F_Y^*\mathcal{F}) = p\cdot \mu(\mathcal{F}) > \mu(\mathcal{G})$, if $F_Y^*\mathcal{F}$ is also $\mu$-semistable, then we are done. Now we may assume that
$F_Y^*\mathcal{F}$ has a nontrivial Harder-Narasimhan filtration
$$0 \to \mathcal{F}_1 \to F_Y^*\mathcal{F} \to \mathcal{F}_2 \to 0.$$
Therefore, there exists a nonzero homomorphism
$$\mathcal{F}_1 \to \mathcal{F}_2\otimes \Omega_Y^1,$$
thus
\begin{align*}
\mu(\mathcal{F}_1) &\leq \mu_{\max}(\mathcal{F}_2\otimes \Omega_Y^1) \\
& = \mu(\mathcal{F}_2) +  \mu_{\max}(\Omega_Y^1) = \mu(\mathcal{F}_2) +  \mu(\mathcal{G}^*),
\end{align*}
i.e., $\mu(\mathcal{F}_2) \geq \mu(\mathcal{F}_1) + \mu(\mathcal{G})$.

Since $\mu(\mathcal{F}_1) > \mu(F_Y^*\mathcal{F}) = p\cdot \mu(\mathcal{F}) >0$, we also have
$\mu(\mathcal{F}_1)>\mu(\mathcal{F}_2) > \mu(\mathcal{G})$.
Therefore,
$${\rm Hom}_Y(\mathcal{F}_1, \mathcal{G}) = {\rm Hom}_Y(\mathcal{F}_2, \mathcal{G}) = 0$$
which implies ${\rm Hom}_Y(F_Y^*\mathcal{F}, \mathcal{G}) = 0$.
\end{proof}

\begin{lem}The nef dimension $$n(Y, \rho^*K_X) = n(X, K_X)\leq 2.$$
\end{lem}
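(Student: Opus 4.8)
The plan is to exploit the $1$-foliation $\mathcal{F}\subset T_Y$ furnished by Lemma \ref{l-foliation} together with the foliated bend-and-break of Theorem \ref{rat-curve} to sweep out $Y$ by $\rho^*K_X$-trivial rational curves; the bound on the nef dimension then drops out of the defining property of the nef reduction map. Throughout write $L=\rho^*K_X$, which is nef, and recall the numerics already established, namely $L\cdot D_1\cdot D_2=0$, $K_Y\cdot D_1\cdot D_2=0$, and $c_1(\mathcal{F})\cdot D_1\cdot D_2>0$. (So that the nef reduction is available one may first base change to an uncountable algebraically closed field; intersection numbers, the foliation, and the existence of a sweeping family of rational curves are unaffected, and the nef dimension is read off from the base-changed situation.)

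First I would produce the curves to which Theorem \ref{rat-curve} is applied. Fix a very ample divisor $H$ on $Y$. For small rational $\epsilon>0$ choose $m=m(\epsilon)$ so that both $m(D_1+\epsilon H)$ and $m(D_2+\epsilon H)$ are very ample, and let $C_\epsilon$ be a general complete intersection of one member of each system, so $[C_\epsilon]=m^2(D_1+\epsilon H)\cdot(D_2+\epsilon H)$; these curves sweep out $Y$. Since $\mathcal{F}$ is saturated in $T_Y$ and $Y$ is smooth, the locus where $\mathcal{F}$ fails to be a subbundle has codimension at least two, so a general $C_\epsilon$ avoids it and $\mathcal{F}$ is smooth along $C_\epsilon$, while $Y$ is smooth along $C_\epsilon$ automatically. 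The quantity entering Theorem \ref{rat-curve} is the scale-invariant ratio
\[
\theta(\epsilon)=\frac{L\cdot C_\epsilon}{(p-1)c_1(\mathcal{F})\cdot C_\epsilon-K_Y\cdot C_\epsilon}
=\frac{L\cdot(D_1+\epsilon H)(D_2+\epsilon H)}{(p-1)c_1(\mathcal{F})\cdot(D_1+\epsilon H)(D_2+\epsilon H)-K_Y\cdot(D_1+\epsilon H)(D_2+\epsilon H)},
\]
which is independent of $m$. As $\epsilon\to0$ the numerator tends to $L\cdot D_1\cdot D_2=0$ and the denominator to $(p-1)c_1(\mathcal{F})\cdot D_1\cdot D_2>0$; hence for all small $\epsilon$ the hypothesis $c_1(\mathcal{F})\cdot C_\epsilon>\tfrac{K_Y\cdot C_\epsilon}{p-1}$ holds and $\theta(\epsilon)\to0$.

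Next I would apply Theorem \ref{rat-curve} with the nef divisor $L=\rho^*K_X$ and the foliation $\mathcal{F}$: through every point $x\in C_\epsilon$ it produces a rational curve $B_x\subset Y$ with $L\cdot B_x\le 2(\dim Y)\,\theta(\epsilon)=6\,\theta(\epsilon)$. The point is to upgrade this to exact triviality. Let $r$ be the $\mathbb{Q}$-Cartier index of $K_X$, so $K_X\cdot C'\in\frac1r\mathbb{Z}$ for every curve $C'\subset X$, and choose $\epsilon$ small enough that $6\,\theta(\epsilon)<\tfrac1r$. For a general point $x$ lying off $\operatorname{Exc}(\rho)$ the curve $B_x$ passes through $x$, hence is not contracted by $\rho$, so $L\cdot B_x=K_X\cdot\rho_*B_x\in\frac1r\mathbb{Z}_{\ge0}$ by the projection formula; being nonnegative (as $L$ is nef) and strictly less than $\tfrac1r$, it must vanish. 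Thus each such $B_x$ is a $\rho^*K_X$-trivial rational curve, and as $x$ ranges over general points these $B_x$ sweep out $Y$.

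Finally I would conclude. Let $f\colon U\to V$ be the nef reduction of $\rho^*K_X$. A very general point $x$ lies on the $\rho^*K_X$-trivial rational curve $B_x$, so by the characterizing property of the nef reduction $B_x$ is contained in the fibre of $f$ through $x$; hence the general fibre is positive-dimensional and $n(Y,\rho^*K_X)=\dim V\le\dim Y-1=2$. Since $\rho$ is birational and an isomorphism over the cofinite locus $X^{\mathrm{sm}}$, $\rho^*K_X$-trivial curves correspond to $K_X$-trivial curves there and the nef reduction of $\rho^*K_X$ factors through $\rho$, giving $n(Y,\rho^*K_X)=n(X,K_X)$. I expect the main obstacle to be precisely the passage from the asymptotic estimate $L\cdot B_x\le6\,\theta(\epsilon)$ to exact $\rho^*K_X$-triviality: this is where the discreteness of $K_X\cdot(\text{curve})$ coming from the $\mathbb{Q}$-Cartier index, combined with the nefness of $\rho^*K_X$ and with the vanishings $L\cdot D_1\cdot D_2=K_Y\cdot D_1\cdot D_2=0$ weighed against $c_1(\mathcal{F})\cdot D_1\cdot D_2>0$, does the work.
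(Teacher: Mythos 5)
Your proof is correct and follows essentially the same route as the paper: complete-intersection curves approximating the degenerate polarization $D_1\cdot D_2$, the foliated bend-and-break of Theorem \ref{rat-curve}, and the integrality of $r_0K_X\cdot(\text{curve})$ to force $\rho^*K_X$-triviality, the only cosmetic difference being that the paper perturbs by letting $m\to\infty$ in $\rho^*|mK_X+A|$ rather than by adding $\epsilon H$ and letting $\epsilon\to 0$. (Note only that Theorem \ref{rat-curve} yields the bound $6p\,\theta(\epsilon)$ rather than $6\,\theta(\epsilon)$ --- you dropped the factor $p$ from the numerator --- but this is harmless since $\theta(\epsilon)\to 0$.)
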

\begin{proof}
Suppose that $A$ is sufficiently ample. Then for any sufficiently divisible $m$ the divisor $mK_X + A$ is also very ample (see \cite{Keeler08}) on $X$. Take a smooth curve $C_m$ as the intersection of two general divisors in $\rho^*|mK_X +A|$. We can assume $\mathcal{F}$ is smooth along $C_m$. Then
\begin{eqnarray*}
K_Y \cdot C_m& =& K_Y \cdot (m\rho^*K_X + \rho^* A)^2 \\
& =& 2mK_Y\cdot \rho^*K_X \cdot \rho^*A + K_Y\cdot (\rho^*A)^2\\
&=& 2m(K_X)^2 \cdot A + K_X\cdot A^2
\end{eqnarray*}
and
\begin{eqnarray*}
c_1(\mathcal{F}) \cdot C_m& = & c_1(\mathcal{F}) \cdot (m\rho^*K_X + \rho^* A)^2 \\
& =& m^2c_1(\mathcal{F})\cdot\rho^*K_X^2 + 2mc_1(\mathcal{F})\cdot \rho^*K_X \cdot \rho^* A + c_1(\mathcal{F})\cdot (\rho^* A)^2.
\end{eqnarray*}

For $m\gg0$, by the construction of $\mathcal{F}$, in either case $\nu(X, K_X ) = 1$ or $2$ we have
$$c_1(\mathcal{F}) \cdot C_m \gg K_Y \cdot C_m.$$
In fact, when $\nu(X,K_X)=1$, the right hand side is a constant while the left hand side increases linearly with $m$; when $\nu(X,K_X)=2$, the right hand side is a linear term while the left hand side has a quadratic term in $m$.

\medskip

Set $L = \rho^*r_0K_X$ where $r_0$ is the Cartier index of $K_X$.
 Applying Theorem \ref{rat-curve}, for general $y \in C_m$, there exists a rational curve $B_y$ such that
$$L \cdot B_y \leq 6\frac{pL \cdot C_m}{(p-1)c_1(\mathcal{F})\cdot C_m- K_Y\cdot C_m}.$$
Fix $m\gg 0$.
Since $L\cdot C_m \leq r_0K_Y\cdot C_m$, the intersection number $L \cdot B_y$ is a non-negative integer less than one, hence $L \cdot B_y = 0$.
As $C_m$ moves, we can find a family of $\mu^*K_X$-trivial curves covering $Y$, which concludes that the nef dimension is at most 2.
\end{proof}

\begin{proof}[Proof of Theorem \ref{nc2}]  Since it suffices to show the result after a base field extension, we can assume $k$ is uncountable. Consider the nef reduction map. If necessary, by blowing up $Y$ we get a fibration $f: Y \to Z$.

We first exclude the case that $Z$ is a surface. Otherwise denote by $F$ a general fiber of $f$. By the construction, $F$ does not intersect with exceptional divisors over $X$. We have that $K_Y \cdot F = \rho^*K_X \cdot F = 0$, thus $p_a(F) = 1$. Since $\mathrm{char}~ k \geq 5$, by \cite[Prop. 2.9]{Zhang17} (or \cite[Cor. 1.8]{PW17}) $F$ is a smooth elliptic curve. However, by the construction in Step 3 the fiber $F$ contains a rational curve as a component.

We have that $\nu(K_X) = n(K_X) = 1$, thus $\rho^*K_X\equiv f^*D$ for an effective divisor $D>0$ on the curve $Z$ by \cite[Lemma 5.2]{BW14}. This implies that  $\rho^*K_X\sim_{\mathbb{Q}} f^*D'$ by \cite[Theorem 1.1]{Tan15b} for some $D'\equiv D$.
Therefore, $\rho^*K_X$ is semiample, and the proof is completed.
\end{proof}

\section{Fundamental group and numerically flat vector bundles}

To mimic the proof from characteristic zero, which uses the dual relation between the fundamental groups and the flat sheaves, we need to understand the corresponding picture in positive characteristics.

\subsection{Local fundamental groups}\label{fdmgp}
In this section we will study the local fundamental groups of three dimensional klt singularities in characteristic $p>5$. In general, the local algebraic fundamental groups are finite for all klt singularities in characteristic zero (cf. \cite{Xu14, BGO17}), and for strongly $F$-regular singularities (cf. \cite{CST16}) in all dimensions. In our situation, since wild ramification may happen, using  the strategy in \cite{Xu14}, we can only prove that the local tame fundamental group  is finite in the sense of Theorem \ref{thm-finite-cover}.

\bigskip

\begin{defn}[{\cite[2.1.2]{GM71}}]\label{d-tame}
Let $L/K$ be a finite separable extension of two fields, let $v$ be a discrete valuation of $K$, and denote by $A_v$ the valuation ring, by $k(v)$ the residue field and by $\Gamma_v$ the valuation group. We say $L$ is \emph{tamely ramified} over $K$ w.r.t. $v$ if for each extension $w$ of $v$

(1)$p \nmid [\Gamma_w: \Gamma_v]$; and

(2) the residue field extension $k(w)$ is separable over $k(v)$.

Let $\pi\colon X' \to X$ be a finite morphism between normal varieties and let $D$ be a prime divisor on $X$. We say $\pi$ is tamely ramified along $D$ if the extension $K(X')/K(X)$ is tame w.r.t. $v_D$. Geometrically this means that for any divisor $D'\subset X'$ over $D\subset X$, the branch degree along $D'$ is not divisible by $p$ and the residue field extension $k(D)\subset k(D')$ is separable.

Let $X$ be a smooth variety, let $D$ be a simple normal crossing divisor, and denote by ${\rm Rev}^D(X)$ the category of the Galois covers of $X$ unramified over $X\setminus D$ and tamely ramified along $D$. Corresponding to ${\rm Rev}^D(X)$, one can define the {\it tame fundamental group} w.r.t. $D$, denoted by $\pi^{\rm t}_1(X,D)$ (see \cite[Sec.4.2,4.3]{GM71}).
\end{defn}

\medskip

\begin{lem}\label{l-pullback}
Let $\pi\colon X' \to X$ be a finite separable morphism between normal varieties over an algebraically closed field of characteristic $p>0$. Let $P$ be a codimension one point on $X$ and $D_P$ the corresponding prime divisor. Let $D'$ be the reduced divisor supported on $\pi^{-1}(D_P)$.
We have
$K_{X'}+D'-\pi^*(K_X+D)\ge 0$ and the left hand side is positive around $D'$ if and only if the branch degree along a component $D_Q$ of $D'$ over $D_P$ is divisible by $p$.
\end{lem}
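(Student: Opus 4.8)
The plan is to reduce everything to a one‑dimensional computation at the codimension‑one points and then read off, for each prime divisor $D_Q\subset D'$ lying over $D_P$, the coefficient of $D_Q$ in the three divisors involved (here the $D$ of the statement is $D_P$). Since the asserted inequality and its positivity are statements about these coefficients, and since normal varieties are regular in codimension one, I would localize at the generic points and set $B=\mathcal O_{X,P}$ and $A=\mathcal O_{X',Q}$, both discrete valuation rings. Passing to completions, Cohen's structure theorem gives $\widehat B\cong k(P)[[s]]$ and $\widehat A\cong k(Q)[[t]]$, and the map induced by $\pi$ sends $s\mapsto u\,t^{e}$ with $u\in\widehat A^{\times}$, where $e=e_Q$ is the branch degree (ramification index) of $D_Q$ over $D_P$; on residue fields it induces an embedding $k(P)\hookrightarrow k(Q)$.

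The conceptually cleanest way to get both effectivity and the positivity criterion at once is to work with the logarithmic cotangent sheaves. Near the generic point of $D_Q$ the pairs $(X,D_P)$ and $(X',D')$ are smooth, so $\Omega^{1}_X(\log D_P)$ and $\Omega^{1}_{X'}(\log D')$ are locally free of rank $n=\dim X$, and since $\pi^{-1}(D_P)\subseteq D'$ there is a natural pullback map $\phi\colon \pi^{*}\Omega^{1}_X(\log D_P)\to\Omega^{1}_{X'}(\log D')$. Because $\pi$ is separable, $\phi$ is generically an isomorphism, so $\det\phi$ is a nonzero \emph{regular} section of $\mathcal O\bigl((K_{X'}+D')-\pi^{*}(K_X+D_P)\bigr)$; this already shows the divisor in question is effective around $D'$, and the coefficient of $D_Q$ equals $\operatorname{ord}_{D_Q}(\det\phi)$.

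It then remains to compute this order. Differentiating $s=u\,t^{e}$ gives the identity $\pi^{*}\!\bigl(\tfrac{ds}{s}\bigr)=e\,\tfrac{dt}{t}+\tfrac{du}{u}$, which is the heart of the matter. When $p\nmid e$ and $k(P)\subset k(Q)$ is separable, $e$ is a unit and $\tfrac{du}{u}$ is a regular (non‑log) differential, so $\phi$ is an isomorphism near the generic point of $D_Q$ and $\operatorname{ord}_{D_Q}(\det\phi)=0$; in terms of the classical different this is $d_Q=e-1$. When $p\mid e$ the $\tfrac{dt}{t}$‑term dies, $\phi$ drops rank along $D_Q$, and its cokernel forces $\operatorname{ord}_{D_Q}(\det\phi)\ge 1>0$, i.e. $d_Q\ge e$. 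Thus positivity of the coefficient is equivalent to the extension being wildly ramified, and in particular holds when the branch degree is divisible by $p$. One may equally quote Dedekind's sharp different formula ($d_Q=e-1$ tame, $d_Q\ge e$ when $p\mid e$) in place of this local computation.

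The main obstacle is the residue field extension $k(P)\subset k(Q)$: for a higher‑dimensional $X$ it need not be separable, and an inseparable residue extension also makes $A/B$ wild and enlarges $\operatorname{ord}_{D_Q}(\det\phi)$ even when $p\nmid e$. Hence the intrinsically correct equivalence is between positivity of the coefficient and failure of tameness in the sense of Definition \ref{d-tame}, the reduction to ``$e$ divisible by $p$'' being exactly the case in which the residue extension is separable. I expect the safest way to close this gap is to invoke the sharp form of the different formula, which treats the ramification index and the residue inseparability uniformly, rather than to treat the inseparable case by hand.
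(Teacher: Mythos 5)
Your proposal is correct and takes essentially the same route as the paper: the paper's proof is exactly the one-line local computation at the generic point of $D_Q$, writing a uniformizer $u$ of $D_P$ as $f\cdot u'^r$ and observing that $\frac{du}{u}\mapsto f^{-1}df+r\frac{du'}{u'}$ fails to generate precisely when $p\mid r$, which is your identity $\pi^*\bigl(\frac{ds}{s}\bigr)=e\,\frac{dt}{t}+\frac{du}{u}$ dressed up via the determinant of $\pi^*\Omega^1_X(\log D_P)\to\Omega^1_{X'}(\log D')$. Your caveat about inseparable residue extensions (so that the clean equivalence is with wildness in the sense of Definition \ref{d-tame} rather than with $p\mid e$ alone) is legitimate and is glossed over in the paper's proof as well, but it is harmless for the paper's applications, where tameness is always imposed with both conditions and only the implication ``tame $\Rightarrow$ coefficient zero'' and its converse for $p\mid e$ are used.
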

\begin{proof}Let $u$ and $u'$ be uniformizers of $D_P$ and a component $D_Q$ over it, and $u\to f\cdot u'^r$ where $f$ is a unit in $\mathcal{O}_{X',Q}$. Thus
$\frac{du}{u}$ is sent to $f^{-1}df+\frac{rdu'}{u'}$. So it is a generator if and only if $r$ is not divisible by $p$. 
\end{proof}

As an immediate consequence we get the following result.
\begin{cor}\label{sing-of-cover}
Let $\pi\colon X' \to X$ be a finite morphism between normal varieties over an algebraically closed field of characteristic $p>0$. Assume that $\pi$ is unramified over the smooth locus $X^{\rm sm}$. If $X$ has at most  terminal singularities, then $X'$ is terminal.
\end{cor}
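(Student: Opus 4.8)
The plan is to use Lemma \ref{l-pullback} to upgrade the hypothesis into the crepant relation $K_{X'}=\pi^*K_X$, and then to compare discrepancies of $X'$ and $X$ through a pair of compatible \emph{smooth} models. First I would record the reduction to the crepant case. Since $X$ is normal it is regular in codimension one, so $\operatorname{Sing}(X)$ has codimension $\ge 2$ and every prime divisor $D_P\subset X$ has its generic point in $X^{\mathrm{sm}}$. As $\pi$ is unramified over $X^{\mathrm{sm}}$, it is therefore \'etale in codimension one; in particular $\Omega_{X'/X}$ vanishes at the generic point, so $\pi$ is separable. Applying Lemma \ref{l-pullback} to each $D_P$, the branch degree is $1$ (not divisible by $p$), whence $\pi^*D_P=D'$ and $K_{X'}=\pi^*K_X$ in codimension one. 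Because $K_X$ is $\mathbb{Q}$-Cartier and $\pi^*K_X$ agrees with $K_{X'}$ in codimension one on the normal variety $X'$, I conclude that $K_{X'}$ is $\mathbb{Q}$-Cartier and $K_{X'}=\pi^*K_X$; that is, $\pi$ is a crepant (quasi-\'etale) cover.

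Next I would set up models. Recall that to prove $X'$ terminal it suffices to exhibit a single resolution $h\colon W'\to X'$ with $W'$ smooth on which every $h$-exceptional divisor has positive discrepancy (see \cite{KM98}); indeed, over the smooth $W'$ any further divisor has discrepancy $\ge 1$ relative to $W'$, and the effective exceptional correction only increases the discrepancy over $X'$. Choose a resolution $g\colon Y\to X$ with $Y$ smooth, so that $K_Y=g^*K_X+\sum a_iE_i$ with all $a_i>0$ since $X$ is terminal. Let $Y'$ be the normalization of $Y\times_X X'$ and let $\tau\colon W'\to Y'$ be a resolution; write $h\colon W'\to X'$ for the induced morphism (birational, since $\deg(Y'/X')=1$) and $\phi\colon W'\to Y$ for the composite, so that $\pi\circ h=g\circ\phi$ and both $Y$ and $W'$ are smooth.

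Finally I would carry out the discrepancy computation. Let $E'$ be an $h$-exceptional divisor; restricting the valuation $\mathrm{ord}_{E'}$ to $K(X)$ gives a divisorial valuation with $\mathrm{ord}_{E'}|_{K(X)}=r\cdot\mathrm{ord}_E$ for a divisor $E$ over $X$ and a ramification index $r\ge 1$. As $\pi$ is finite its centres preserve codimension, so the centre of $E$ on $X$ has codimension $\ge 2$; thus $E$ is exceptional over $X$ and $a(E,X)>0$. Now fix $m$ with $mK_X$ Cartier and a local generator $\sigma$ of $\mathcal O_X(mK_X)$ near the centre; comparing orders along $E'$ of $\phi^*g^*\sigma=h^*\pi^*\sigma$ and using $K_{W'}=\phi^*K_Y+R_\phi$ (Riemann--Hurwitz for the generically finite separable map $\phi$ between smooth varieties, with $R_\phi\ge 0$) together with $\mathrm{ord}_E(g^*\sigma)=m\,a(E,X)$ and $K_{X'}=\pi^*K_X$, I obtain
$$a(E',X')=r\cdot a(E,X)+\mathrm{ord}_{E'}(R_\phi)\ \ge\ r\,a(E,X)\ >\ 0.$$
Hence every $h$-exceptional discrepancy is positive and $X'$ is terminal.

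The main obstacle is to organise the comparison so that it runs through the \emph{smooth} resolution $Y$ of $X$ rather than through the possibly singular normalized fibre product $Y'$: this is exactly what guarantees that $R_\phi$ is an effective divisor (the different of a separable generically finite morphism of smooth varieties) and that the bound $a(E,X)>0$ is available, thereby sidestepping any need to control the singularities of $Y'$. Separability---hence the effectivity of $R_\phi$ and the applicability of Riemann--Hurwitz---is furnished by the unramified hypothesis, and all the discrepancy bookkeeping takes place on smooth models with $\mathbb{Q}$-Cartier canonical classes, so the open question of Cohen--Macaulayness of terminal singularities never intervenes.
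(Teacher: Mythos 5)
Your overall strategy is the same as the paper's: the paper's proof is a one-line appeal to the discrepancy comparison of \cite[5.20]{KM98}, with Lemma \ref{l-pullback} guaranteeing that in characteristic $p$ the (possibly wild) ramification only pushes discrepancies on $X'$ upward. Your reduction to the crepant case, the reduction of terminality to positivity of exceptional discrepancies on a single smooth resolution, and the choice to route the computation through the smooth model $Y$ so that the different $R_\phi$ of $\phi\colon W'\to Y$ is effective, are all correct and are exactly the content hiding inside the paper's citation.

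There is, however, one bookkeeping slip in your displayed identity. If $E$ denotes the divisor over $X$ with $\mathrm{ord}_{E'}|_{K(X)}=r\cdot\mathrm{ord}_E$, then $E$ need not be realized as a divisor on $Y$, and the order along $E$ of the rational-function part of $g^*\sigma$ computed via a trivialization of $\omega_Y$ at the centre of $E$ on $Y$ is $m\bigl(a(E,X)-a(E,Y)\bigr)$, not $m\,a(E,X)$. The correct identity coming out of your comparison is therefore
$$a(E',X')=r\bigl(a(E,X)-a(E,Y)\bigr)+\mathrm{ord}_{E'}(R_\phi),$$
and since $a(E,X)-a(E,Y)=\sum_i a_i\,\mathrm{ord}_E(E_i)\ge 0$ may vanish (it does whenever $E'$ is $\phi$-exceptional and $\phi(E')$ meets no $E_i$ in codimension one), your stated lower bound $a(E',X')\ge r\,a(E,X)$ does not follow as written. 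The fix is short: if $E'$ is not $\phi$-exceptional then $\phi(E')$ is a divisor on $Y$ lying over the codimension $\ge 2$ set $\pi(h(E'))$, hence equals some $g$-exceptional $E_i$ and $a(E,Y)=0$, so your formula is valid and gives $a(E',X')\ge r\,a_i>0$; if $E'$ is $\phi$-exceptional then, $W'$ and $Y$ being smooth, the image of $\phi^*\Omega_Y^{\dim Y}\to\Omega_{W'}^{\dim Y}$ vanishes along $E'$, so $\mathrm{ord}_{E'}(R_\phi)\ge 1$ and $a(E',X')\ge 1>0$ directly. With that case distinction added, your proof is complete and matches the paper's intended argument.
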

\begin{proof}We apply the calculation from characteristic zero as in \cite[5.20]{KM98}. Lemma \ref{l-pullback} implies that on the covering $X'$, the log discrepancy must be larger than or equal to the formula obtained in characteristic zero. In particular, if $X$ is terminal, then $X'$ is terminal.
\end{proof}

\begin{thm}\label{thm-finite-cover}
Let $(X,x)$ be a germ of a three dimensional algebraic klt singularity over an algebraically closed field $k$ with ${\rm char}~k = p >5$. If
\begin{align*}
\cdots \to (X_n,x_n) \xrightarrow{\eta_n} (X_{n-1}, x_{n-1}) \to \cdots \to (X_1,x_1) \xrightarrow{\eta_1} (X_0,x_0) = (X,x)
\end{align*}
is any sequence of finite covers \'{e}tale over $X \setminus \{x\}$ and tamely ramified around every divisor over $x$, then there exists some $n_0$ such that for all $n> n_0$ the covers $\eta_n$ are isomorphisms.
\end{thm}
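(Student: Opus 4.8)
The plan is to follow the strategy of \cite{Xu14}, reducing the finiteness of the tame local fundamental group to a boundedness statement for log del Pezzo surfaces. The key new point in characteristic $p$ is that one cannot invoke the implication ``rationally connected $\Rightarrow$ simply connected'' that drives the characteristic zero argument; instead, I would bound only the \emph{tame} covers, via a volume computation combined with boundedness of surfaces.

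First I would record that the tower is crepant. Since each $\eta_n$ is \'etale over $X_{n-1}\setminus\{x_{n-1}\}$ and tamely ramified around every divisor over $x_{n-1}$, Lemma \ref{l-pullback} gives $K_{X_n}=\eta_n^*K_{X_{n-1}}$, so by the discrepancy comparison of \cite[5.20]{KM98} used in Corollary \ref{sing-of-cover} every $X_n$ is klt and the composite $\pi_n\colon X_n\to X$ satisfies $K_{X_n}=\pi_n^*K_X$. It therefore suffices to bound the degrees $d_n=\deg\pi_n$: once the $d_n$ are bounded the tower must stabilize, which is exactly the assertion.

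Next I would extract a Koll\'ar component. Using the three-dimensional MMP in characteristic $p>5$, one produces a proper birational $\mu\colon Y\to X$, an isomorphism over $X\setminus\{x\}$, extracting a single divisor $S$ with $\mu(S)=x$, such that $(Y,S)$ is plt and $-(K_Y+S)$ is $\mu$-ample. By adjunction $(S,{\rm Diff}_S)$ is an $\epsilon$-klt log del Pezzo surface for some fixed $\epsilon>0$, with the coefficients of ${\rm Diff}_S$ in the fixed set $\{1-1/m\}$. Pulling the tower back over $Y$ and normalizing, I obtain crepant covers $(S_n,{\rm Diff}_n)\to(S,{\rm Diff}_S)$, \'etale over $S\setminus\Supp{\rm Diff}_S$ and tamely ramified along ${\rm Diff}_S$, with total degree $\deg(S_n\to S)=d_n$; since crepancy preserves log discrepancies and tameness keeps the coefficients standard, each $(S_n,{\rm Diff}_n)$ is again an $\epsilon$-klt log del Pezzo surface with coefficients in the same DCC set. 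The heart of the reduction, as in \cite{Xu14}, is that a tame cover of $X$ \'etale over $X\setminus\{x\}$ is controlled by its restriction to the Koll\'ar component, so that bounding $\deg(S_n\to S)$ bounds $d_n$ and forces stabilization of $X_n\to X$.

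Finally I would close the argument by a volume computation. Because the covers are crepant, $\vol(-(K_{S_n}+{\rm Diff}_n))=d_n\cdot\vol(-(K_S+{\rm Diff}_S))$. By boundedness of $\epsilon$-klt log del Pezzo surfaces with coefficients in a fixed DCC set in characteristic $p$, the family $\{(S_n,{\rm Diff}_n)\}$ is bounded, so its anticanonical volumes are bounded above; as $\vol(-(K_S+{\rm Diff}_S))>0$ is fixed, the $d_n$ are bounded and the tower stabilizes. The main obstacle I expect is precisely this surface-level input in positive characteristic: establishing (or citing) boundedness of $\epsilon$-klt log del Pezzo surfaces, and carefully checking that tame ramification around divisors over $x$ corresponds, through $\mu$ and adjunction, to tame ramification along ${\rm Diff}_S$, so that the coefficients of the different genuinely remain in a fixed DCC set under pullback. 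A secondary technical point is the construction of the Koll\'ar component in characteristic $p$ and the verification that the correspondence between tame covers of $(X,x)$ and tame covers of $(S,{\rm Diff}_S)$ survives here.
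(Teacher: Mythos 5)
The first half of your proposal --- the crepant tower, the extraction of a Koll\'ar component, the crepant pullback to klt log del Pezzo surfaces, and the boundedness-plus-volume argument --- is essentially the first half of the paper's proof (the paper uses an auxiliary $\Delta+H$ to make $E$ the \emph{unique} divisor of log discrepancy $0$, so that the pulled-back pairs pick out a well-defined Koll\'ar component $E_i$ on each $X_i$, and then invokes Alexeev's boundedness). But there is a genuine gap at the step you call ``the heart of the reduction'': the assertion that $\deg(S_n\to S)=d_n$, i.e.\ that bounding the induced covers of the Koll\'ar component bounds $d_n$. This is false. A cover \'etale over $X\setminus\{x\}$ is allowed to be (tamely) ramified along divisors lying \emph{over} $x$ --- this is exactly what the hypothesis ``tamely ramified around every divisor over $x$'' anticipates --- and in particular it can be totally ramified along the Koll\'ar component itself. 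Model example: $X=\mathbb{A}^3/\mu_m$ with the diagonal action, $\gcd(m,p)=1$; the cover $\mathbb{A}^3\to X$ has degree $m$ and is \'etale off the origin, yet on the blow-ups of the origin the induced map between the exceptional $\mathbb{P}^2$'s is an isomorphism. Consequently your volume identity $\vol(-(K_{S_n}+{\rm Diff}_n))=d_n\cdot\vol(-(K_S+{\rm Diff}_S))$ is wrong: the correct scaling factor is $\deg(S_n\to S)$ (this is the identity $\deg(\gamma_i)(-K_E-\Gamma)^2=(-K_{E_i}-\Gamma_i)^2$ in the paper), which your argument does bound --- but that only proves that the covers of the Koll\'ar components stabilize, which is half the theorem. (A minor further point: crepancy of the tower is automatic because the covers are \'etale in codimension one, $x$ being a point; tameness enters not there but in the discrepancy comparison of Lemma \ref{l-pullback} and in keeping the surface pairs klt.)

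The paper's proof closes exactly this gap with a second step, following \cite[p.413]{Xu14}. Once $i^*$ is fixed so that $E_j\to E_{i^*}$ is an isomorphism for all $j>i^*$, the residual covers $Y_j\to Y_{i^*}$ are cyclic \emph{tame} covers of degree $d_j$ prime to $p$, totally ramified along $E_{i^*}$, and one must show these degrees are bounded --- equivalently, that the image of the loop around the generic point of $E_{i^*}$ in $\pi_1^{\rm loc,t}(x_{i^*},X_{i^*})$ is torsion. This is done by cutting with a general hypersurface $T_{i^*}$ and decomposing $\eta_*\O_{T_j}$ into eigen-line-bundles for the $\mathbb{Z}/(d_j)$-action (here $\gcd(d_j,p)=1$ is essential), which yields $[C_{i^*}]=d_j\alpha_j$ in ${\rm Cl}(T_{i^*})$ for the curve $C_{i^*}=E_{i^*}\cap T_{i^*}$. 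Since $C_{i^*}^2$ is a fixed negative rational number and $C_{i^*}\cdot\alpha_j\in\frac{1}{e}\mathbb{Z}$ with $e$ the Cartier index of $[C_{i^*}]$, the relation $0>C_{i^*}^2=d_j(C_{i^*}\cdot\alpha_j)$ forces $d_j\le es$. Without this step, or some substitute handling covers that are nontrivial only along the exceptional divisor, your proposal does not prove the theorem.
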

\begin{proof}
The proof follows closely to the argument in \cite{Xu14}, but we replace the fundamental group in characteristic zero by the tame fundamental group, since the latter behaves similarly as the \'etale fundamental group in characteristic zero (see e.g. \cite{GM71}) 

By the standard construction of the Koll\'ar component (\cite[Lemma 1]{Xu14}), we can find a divisor $\Delta$ on $X$, an ample divisor $H$ on $X$ and a Koll\'ar component $E$ of $(X,\Delta)$ such that $E$ is the unique divisor such that the discrepancy $a(E, X, \Delta+ H) = -1$. Let $f\colon Y \to X$ be the morphism which only extracts the Koll\'ar component $E$. Then $(Y, E)$ is plt and $(K_{Y} + E + f_{*}^{-1}\Delta)|_{E} = K_{E} + \Gamma$ is antiample.

Let $Y_i$ be the normalization of $X_i\times_{X} Y$, and denote by $f_i\colon Y_i\to X_i$ and $\psi_i: Y_i \to Y$ the natural projections. Since $\pi_i: X_i \to X$ is tamely ramified, applying Lemma \ref{l-pullback} we conclude that an exceptional divisor $F_i$ over $X_i$ has the discrepancy $a(F_i, X_i, \pi_i^*(\Delta+ H)) = -1$ if and only if it is dominant over $E$. Let $E_i$ be one such a component, then we  define $\Gamma_i$ by $\psi_i^*(K_{Y} + E + f_{*}^{-1}\Delta)|_{E_i} = K_{E_i} + \Gamma_i$, which is just the pull back of $K_E+\Gamma$ under the finite morphism $\gamma_i\colon E_i\to E$. Since $\gamma_i$ is separable by Definition \ref{d-tame}(2),  Lemma \ref{l-pullback} implies that $({E_i},\Gamma_i)$ is klt. Thus $E_i$ yields the unique irreducible $f_i$-exceptional component and it is a Koll\'ar component of $(X_i, \Delta_i = \pi_i^*\Delta)$.

Then as in characteristic zero case,
we get a sequence of separable morphisms between klt pairs
$$\cdots \to (E_i, \Gamma_i) \to \cdots \to (E_1,\Gamma_1)\to (E_0,\Gamma_0)~\mathrm{with}~K_{E_i} + \Gamma_i = \gamma_i^*(K_{E} + \Gamma).$$
Fix an integer $N$ such that $N(K_{E}+\Gamma)$ is Cartier. Then $N(K_{E_i}+\Gamma_i)$ is Cartier. Since $(E_i,\Gamma_i)$ are all $\frac{1}{N}$-lc, by the boundedness of $\epsilon(=\frac{1}{N})$-lc log Fano surfaces \cite[Theorem 6.9]{Al94}, $\{E_i\}$ are contained in a bounded family.
As
$$\deg(\gamma_i)(-K_E-\Gamma)^2=(-K_{E_i}-\Gamma_i)^2\le \vol(-K_{E_i})$$ is bounded from above, we conclude that for sufficiently big $i$ and $j >i$, $E_j \to E_i$ is an isomorphism.

We complete the argument as in \cite[p.413]{Xu14}. By the above discussion, we may fix $i^*$ as above such that for any $j>i^*$, $E_j\to E_{i^*}$ is isomorphic. Let $\gamma$ be the generator of the tame fundamental group ($\cong \prod_{p\neq l}\mathbb{Z}_l[1]$) of $\mathcal{O}^{\rm hs}_{P_{E_{i^*}},Y_{i^*}}$ (see \cite[XIII, 5.3]{SGA1}), where $P_{E_{i^*}}$ is the generic point of $E_{i^*}$ and $\mathcal{O}^{\rm hs}_{P_{E_{i^*}},Y_{i^*}}$ is the strict henselization of the local ring $\mathcal{O}_{P_{E_{i^*}},Y_{i^*}}$. Denote by
$$\rho\colon \pi_1^{\rm loc, t}(P^{\rm hs}_{E_{i^*}}, \mathcal{O}^{\rm hs}_{P_{E_{i^*}},Y_{i^*}})\to \pi_1^{\rm loc, t}(x_{i^*}, X_{i^*})$$
the natural homomorphism. Since $E_j\to E_{i^*}$ is an isomorphism for any $j\ge i^*$, we know $Y_j\to Y_{i^*}$ is a totally ramified tame cover induced by $\pi_1^{\rm loc, t}(x_{i^*}, X_{i^*})\to \mathbb{Z}/(d_j)$ for some positive integer $d_j$ with $(d_j,p)=1$, which sends $\rho(\gamma)\in \pi_1^{\rm loc, t}(x_{i^*}, X_{i^*})$ to the generator $[1]$ of $\mathbb{Z}/(d_j)$. Therefore, it suffices to show that the image of $\gamma$ in $\pi_1^{\rm loc, t}(x_{i^*}, X_{i^*})$ is a torsion, which we will verify in the next paragraph.

In fact, to see this we can cut $Y_{i^*}$ by a general hypersurface $T_{i^*}\subset Y_{i^*}$ to get a surface $T_{i^*}$.
For the $\mathbb{Z}/(d_j)$ quotient of  $\pi_1^{\rm loc, t}(x_{i^*}, X_{i^*})$ which corresponds to $Y_j\to Y_{i^*}$, it induces a degree $d_j$ morphism $\eta: T_j:=Y_j\times_{Y_{i^*}}T_{i^*}\to T_{i^*}$ which is degree $d_j$ totally ramified around the curve $C_{i^*}:=E_{i^*}\cap T_{i^*}$. For this cyclic cover,
by considering the Galois group $(\cong \mathbb{Z}/(d_j))$-action, as ${\rm gcd}(d_j,p)=1$, we obtain a decomposition of $\eta_*\O_{T_j}$ corresponding to the eigenvalues of the Galois action,
which restricting on the smooth locus $T^{\mathrm{sm}}_{i^*}$ can be written as $\eta_*\O_{T_j}|_{T^{\mathrm{sm}}_{i^*}} \cong \bigoplus_{k=0}^{d_j-1} L^{\otimes k}$, where $L$ is a line bundle corresponding to the sections with eigenvalue being some $d_j$-th primitive root $\xi$.
Moreover, we have the relation $L^{\otimes d_j} \sim \mathcal{O}_{T^{\mathrm{sm}}_{i^*}}(-C_{i^*})$.
Thus $[C_{i^*}]=d_j \alpha_j$ for some $\alpha_j\in {\rm Cl}(T_{i^*})$ corresponding to the line bundle $L^{-1}$ on $T^{\mathrm{sm}}_{i^*}$ .
Taking the intersection with $[C_{i^*}]$ on $T_{i^*}$, we have
$$0>C_{i^*}^2=d_j(C_{i^*}\cdot \alpha_j),$$
and $[C_{i^*}]^2$ is a fixed rational number. Assume $[C_{i^*}]^2 = -\frac{s}{r}$ where $r,s$ are two positive integers prime to each other.
As $\alpha_j \in {\rm Cl}(T_{i^*})$, we know $[C_{i^*}]\cdot \alpha_j=\frac{1}{d_j}C_{i^*}^2 \in \frac{1}{e}\mathbb{Z}$, where $e$ is the Cartier index of $[C_{i^*}]$. Thus $d_j \leq es$.
\end{proof}

\subsection{Numerically flat vector bundles}

A key tool used in the proof of nonvanishing in characteristic zero is Donaldson's theorem on the equivalence between flat bundles and semistable bundles with trivial Chern classes, which is a special case of Simpson's correspondence of stable sheaves. We note that in the proof in \cite{Miy88a}, only the case when the variety has a trivial fundamental group is needed.  In this section, we recall the definition and basic results on numerically flat vector bundles. They form a neutral Tannaka category whose Tannaka dual is used to define the {\it S-fundamental group scheme} in \cite{Lan11}. Using such a theory, part of Simpson's correspondence can be recovered in characteristic $p$ (see \cite{Lan11, Lan12}). In particular,  together with the arguments in \cite{EM10}, we can obtain a similar description of numerically flat vector bundles on a variety with the fundamental group being `small', i.e., only consisting of the $p$-part. This is precisely Theorem \ref{nfvb}, which plays an analogous role as Donaldson's Theorem in Miyaoka's proof. It is a generalization of \cite[Proposition 8.2]{Lan12}, where the case $\pi_1^{\rm et}(X, x) = \{1\}$ is studied. For our purpose, we need to treat the case that $\pi_1^{\rm et}(X, x)$ is a pro-$p$-group.

\begin{defn}Let $X$ be a smooth projective variety over an algebraically closed field $k$ of characteristic $p>0$. A vector bundle $E$ on $X$ is said to be \emph{numerically flat} if both $E$ and its dual $E^*$ are nef.
\end{defn}

There are several other equivalent characterizations of numerically flat sheaves (see \cite[Theorem 2.2]{Lan12}). For example, $E$ is numerically flat if and only if $E$ is a strongly $H$-semistable torsion free sheaf for some (hence for all) ample divisor $H$ and $c_i(E) = 0$ for every $i \in \Z$.

\begin{thm}\label{nfvb}
Let $X$ be a smooth projective variety over an algebraically closed field $k$ of characteristic $p>0$ and $x\in X$ a closed point. Assume that $\pi_1^{\rm et}(X, x)$ is a pro-$p$-group. If $E$ is a numerically flat vector bundle of rank $r$ on $X$, then there exists an integer $e>0$ and an \'{e}tale cover $\pi: Y \to X$ such that $F_Y^{e*}\pi^*E \cong \oplus^r\O_Y$.
\end{thm}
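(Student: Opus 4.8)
The plan is to prove the theorem in two stages: first to show that a single sequence of Frobenius pullbacks already makes $E$ \emph{\'etale-trivializable}, and then to kill the resulting finite monodromy by one finite \'etale cover. Recall the theorem of Lange--Stuhler: a vector bundle $V$ on $X$ is \'etale-trivializable, i.e. arises from a representation $\pi_1^{\rm et}(X,x)\to GL_r(\overline{\mathbb F}_p)$ with finite image, if and only if $F_X^{f*}V\cong V$ for some $f\ge 1$. The key reduction is therefore the following claim: there is an integer $e\ge 0$ such that $F_X^{e*}E$ is \'etale-trivializable. Granting this, I would finish by writing $F_X^{e*}E$ as the bundle attached to a representation $\rho\colon \pi_1^{\rm et}(X,x)\to GL_r(k')$ with $k'/\mathbb F_p$ finite and image $G=\rho(\pi_1^{\rm et}(X,x))$ a finite group. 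Letting $\pi\colon Y\to X$ be the connected finite \'etale cover corresponding to the open subgroup $\ker\rho$, the pullback $\pi^*F_X^{e*}E=F_Y^{e*}\pi^*E$ is attached to $\rho|_{\pi_1^{\rm et}(Y)}$, which is trivial by construction; hence $F_Y^{e*}\pi^*E\cong\oplus^r\O_Y$, as wanted. (The pro-$p$ hypothesis further forces $G$ to be a finite $p$-group, so $\pi$ has $p$-power degree, though this is not logically needed for the last step.)

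The heart of the matter is the claim, and this is exactly where the hypothesis that $\pi_1^{\rm et}(X,x)$ is a pro-$p$-group is essential: without it the claim is false, since a non-torsion point of $\mathrm{Pic}^0(X)$ gives a numerically flat line bundle $L$ with $F_X^{e*}L=L^{\otimes p^e}$ never \'etale-trivializable. I would establish the claim as follows. Since $E$ is numerically flat it is strongly semistable with vanishing Chern classes, and by boundedness of semistable sheaves \cite{Lan04} the stable numerically flat bundles of rank $\le r$ form a bounded family; moreover the pro-$p$ hypothesis forces $\mathrm{Alb}(X)=0$ (its prime-to-$p$ torsion would otherwise give prime-to-$p$ quotients of $\pi_1^{\rm et}$), so there are no positive-dimensional continuous families of such bundles, and each stable numerically flat bundle becomes $F_X^*$-periodic after a Frobenius twist, hence \'etale-trivializable by Lange--Stuhler.

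Now I would pass to a Jordan--H\"older filtration of $E$ in the Tannakian category of numerically flat bundles \cite{Lan11}. Each stable graded piece becomes \'etale-trivializable after a twist, and since $\pi_1^{\rm et}(X,x)$ is pro-$p$ while every irreducible representation of a $p$-group over $\overline{\mathbb F}_p$ is trivial, each such piece in fact becomes isomorphic to $\O_X$ after a twist. Thus for $e\gg 0$ the bundle $F_X^{e*}E$ is a successive self-extension of $\O_X$, i.e. unipotent, with extension classes of the successive subquotients lying in $H^1(X,\O_X)$. Decomposing the $p$-linear Frobenius action on these finite-dimensional spaces into its semisimple and nilpotent parts, a further Frobenius twist annihilates the nilpotent components; by Artin--Schreier theory the surviving semisimple ones correspond exactly to $\mathbb Z/p$-quotients of $\pi_1^{\rm et}(X,x)$, so the resulting unipotent bundle is \'etale-trivializable. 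This proves the claim.

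The main obstacle is the step asserting that \emph{stable} numerically flat bundles become $F_X^*$-periodic under the pro-$p$ hypothesis; it is here that one must combine boundedness \cite{Lan04}, the description of numerically flat bundles via the $S$-fundamental group scheme \cite{Lan11}, and the Esnault--Mehta vanishing \cite{EM10} for the ``local'' (non-\'etale) part of that group scheme, exactly as in \cite[Lemma 8.1 and Proposition 8.2]{Lan12}, but now retaining the residual pro-$p$ \'etale monodromy rather than assuming it trivial. The remaining ingredients---the Jordan--H\"older filtration, the triviality of irreducible $p$-group representations, the semisimple/nilpotent splitting of Frobenius on $H^1(X,\O_X)$, and the final descent along $\pi$---are then formal.
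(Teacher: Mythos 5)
Your proposal follows essentially the same route as the paper's proof: triviality of Frobenius-periodic stable bundles via Lange--Stuhler \cite{LS77} together with the fact that $p$-group representations in characteristic $p$ have nonzero invariants, Frobenius-periodicity of stable numerically flat bundles via the Esnault--Mehta density-of-torsion-points argument as in \cite{EM10} and \cite{Lan12}, a Jordan--H\"older filtration whose graded pieces become $\O_X$ after a twist, and the elimination of the resulting unipotent extension classes in $H^1(X,\O_X)$ by Frobenius iterations and $\mathbb{Z}/p$-\'etale (Artin--Schreier) covers, exactly as in the paper's Lemmas on stable pieces and on killing cohomology. The one shaky link is your parenthetical claim that $\mathrm{Alb}(X)=0$ already rules out positive-dimensional families of stable numerically flat bundles of rank $\ge 2$ --- it does not; the correct argument, which you do identify in your final paragraph, is that a positive-dimensional Verschiebung-divisible closure of the Frobenius orbit would have dense torsion points over a good model, and under the pro-$p$ hypothesis every torsion point is the trivial bundle, a contradiction.
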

\begin{proof}
First we prove the following lemma, which says that torsion points in the moduli space of stable sheaves $\mathcal{M}_r$ (\cite[Definition 3.12]{EM10}) correspond to exactly $\O_X$.
\begin{lem}\label{l1}
If $E'$ is a stable vector bundle on $X$ as in Theorem \ref{nfvb} and there exists some $e>0$ such that $F^{e*}E' \cong E'$, then $E' \cong \O_X$.
\end{lem}
\begin{proof}
Applying Lange-Stuhler's theorem \cite[Satz 1.4]{LS77}, there exists a Galois \'{e}tale cover $\eta: Z \to X$ trivializing $E'$, i.e., $\eta^*E' \cong \O_{Z}^r$. We have a $G(:=\pi_1(Z/X))$-invariant isomorphism $\eta^*E' \cong \O_{Z}^r$, and $E'\cong (\eta_*\O_Z^r)^G$, which corresponds to a representation
$$\rho: G \to {\rm GL}(r, k)$$
via the action of $G$ on $V= H^0(Z, \eta^*E') \cong k^r$.

Since $G$ is a $p$-group and $\mathrm{char}~k =p$, we have the following two immediate facts:

(1) for $g \in G$, every eigenvalue of the action of $g$ on $V$ is equal to $1$, thus the $g$-invariant subspace $V^g$ has rank $>0$;

(2) the center $H \lhd G$ is a nontrivial $p$-group, and for every $h \in H$ the $h$-invariant subspace $V^h$ is $G$-equi-invariant, i.e., $g V^h \subseteq V^h$, thus there exists a natural action of the quotient group $G/<h>$ on $V^h$.\\
By induction on the order of $G$ we can conclude that $\rk~V^{G} >0$. From this we see that $E'$ contains a subsheaf $E_0'\cong \O_X$. Therefore, if $E'$ is stable then $E'\cong \O_X$ as $\mu(E')=0$.
\end{proof}

Granted the above lemma, then we can show the following lemma by an application of the proof of \cite[Theorem 3.15]{EM10} (see also \cite[Lemma 8.1]{Lan12}). We copy them here for readers' convenience.
\begin{lem}\label{l2}
If $E'$ is a strongly stable numerically flat vector bundle on $X$ as above, then there exists $e >0$ such that $F^{e*}E'\cong \O_X$.
\end{lem}
\begin{proof}
By assumption all vector bundles $E_n = (F_X^n)^*E'$ are stable. Let $N$ be the Zariski closure of the set $\{[E_0],[E_1],...,\}$ in the moduli space $\mathcal{M}_r$. If $N$ has dimension zero then some Frobenius pullback $E_e = (F_X^e)^*E'$ is a torsion point of  $\mathcal{M}_r$. Since $\pi_1^{\rm et}(X, x)$ is a pro-$p$-group, we can apply Lemma \ref{l1} to show that $E_e = \O_X$.

Hence we can assume that $N$ has dimension at least one. Note that the set $N'$ of irreducible components of $N$ of dimension greater than or equal to 1 is Verschiebung divisible (see \cite[Definition 3.6]{EM10}),  because $V |_N$ is defined at points $E_n$ for $n \geq 1$. Let $N'_S$ be a  good model of $N'$, then the torsion points of $N'_S$ are dense in $N'_S$ by \cite[Theorem 3.14]{EM10}. Since the specialization morphism of the \'etale fundamental groups $\pi_1^{\rm et}(X) \to \pi_1^{\rm et}(X_s \times_S \bar{\mathbb{F}}_p)$ is surjective for closed $s \in S$ (see \cite[Chapter X Theorem 3.8]{SGA1}, the latter is also a pro-$p$-group, which implies that the torsion points correspond to the trivial bundle by Lemma \ref{l1}. Therefore, the trivial bundle is dense in $N'$, which is a contradiction.
\end{proof}

Recall the following well known result which in fact holds for general projective normal varieties.
\begin{lem}\label{l3}
For an extension
$0 \to \O_X \to V \to \O_X \to 0$, there exists $\eta: Z \to X$ composed of a sequence of $\mathbb{Z}/(p)$-\'{e}tale covers and Frobenius iterations of $X$ such that the pull back extension
$$0 \to \O_Z \to \eta^*V \to \O_Z \to 0$$
splits.
\end{lem}
\begin{proof}
This is proved by the well known idea of `killing cohomology'. By \cite[Proposition 12 and Section 9]{Ser58} (see also \cite[end of the proof of Lemma 1.3]{HPZ17}), the morphism $\eta$ can be chosen to be a composite of $\mathbb{Z}/(p)$-\'{e}tale covers and Frobenius morphisms.
\end{proof}

Now let us finish the proof of the theorem.  If $e$ is sufficiently large, then the Jordan-H\"{o}lder filtration
$$0 = E_0 \subset E_1 \subset E_2 \subset \cdots \subset E_r = F_X^{e*}E$$
has all the graded pieces being strongly stable (cf. \cite[Theorem 2.7]{Lan04}), and so they satisfy that $E_i/E_{i-1} \cong \O_X$ by Lemma \ref{l2}.

By induction, applying Lemma \ref{l3} we can get an \'{e}tale cover $\pi: Y \to X$ such that for $e \gg 0$,
$\pi^*F_X^{e*}E \cong F_Y^{e*}\pi^*E \cong \O_Y^r.$
\end{proof}

\section{Nonvanishing}\label{nv}
In this section we aim to prove nonvanishing Theorem \ref{thm-nonvanishing}. Abundance has been proved when $K_X$ is big in \cite{Birkar16} \cite{Xu15},  $\dim \Pic^0(X) >0$ in \cite{Zhang16, Zhang17}, or $\rho^*K_X \cdot c_2(Y) < 0$ by Theorem \ref{case-nc2}. We only need to prove
\begin{thm}\label{thm-nv}
Let $X$ be a terminal projective minimal 3-fold over an algebraically closed field $k$ of characteristic $p > 5$. Let $\rho: Y \to X$ be a smooth resolution of singularities. Assume that $\dim \Pic^0(Y) =0$, $\nu(X, K_X) <3$ and $\rho^*K_X \cdot c_2(Y) \geq 0$. Then $\kappa(X, K_X) \geq 0$.
\end{thm}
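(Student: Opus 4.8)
The plan is to mimic Miyaoka's characteristic zero strategy for nonvanishing, now that the pseudo-effectivity-style input $\rho^*K_X\cdot c_2(Y)\ge 0$ has been secured and the auxiliary machinery (finiteness of the tame local fundamental group, Theorem \ref{thm-finite-cover}, and the structure theorem for numerically flat bundles, Theorem \ref{nfvb}) is in place. I would reduce to $\nu(K_X)\in\{0,1,2\}$ with $\dim\Pic^0(Y)=0$ and treat the three numerical-dimension cases separately. The case $\nu(K_X)=0$ should be essentially immediate: a nef divisor with numerical dimension zero is numerically trivial, so after passing to the resolution and using $\dim\Pic^0(Y)=0$ one deduces that $\rho^*K_X$ is torsion in the Picard group, whence $\kappa(X,K_X)=0$. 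The substance lies in $\nu(K_X)=1$ and $\nu(K_X)=2$.

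For these cases, following \cite{Miy88a} and \cite[Section 9]{Kol92}, I would produce sections by a cohomological argument: if $\kappa(X,K_X)=-\infty$ then $H^0(X,mK_X)=0$ for all $m>0$, and by Serre duality (using dualizing complexes, since terminal threefold singularities are not known to be Cohen--Macaulay in this range) this forces nonvanishing of a higher cohomology group, producing a nonsplit extension class in $H^2(X,nK_X)^*$. This class builds a rank-two (more generally higher rank) sheaf $\mathcal{E}$ sitting in an exact sequence. The key idea is to separate cases according to whether $\pi_1^{\rm t}(X^{\rm sm})$ is finite or infinite. When it is finite, after an étale-in-codimension-one tame cover (legitimate by Corollary \ref{sing-of-cover}, which keeps us terminal) we reduce to $\pi_1^{\rm et}(X^{\rm sm})$ being a pro-$p$-group, and then Theorem \ref{nfvb} plays the role of Donaldson's theorem: it lets me conclude that a suitable numerically flat piece of $\mathcal{E}$ becomes trivial after Frobenius pullback and a further étale cover, so that $\mathcal{E}$ is not strongly stable; analyzing the destabilizing subsheaf of $F_X^{e*}\mathcal{E}$ then yields the desired section of a multiple of $K_X$. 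When $\pi_1^{\rm t}(X^{\rm sm})$ is infinite, I would argue instead as in \cite[Case (3)]{Kol92}, but replacing Hodge symmetry (which fails in characteristic $p$) by the symmetry of Hodge--Witt numbers from \cite[Section 6.3]{Ill82} and \cite{Eke86} to control the relevant cohomology; the $\nu(K_X)=1$ subcase is where this Hodge--Witt input is needed, while for $\nu(K_X)=2$ the failure of Kawamata--Viehweg vanishing is circumvented by the weaker surface vanishing theorems of \cite{Szp79, Lan09, Tan15}.

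The main obstacle I anticipate is the behavior of the extension class $\mathcal{E}$ under iterated Frobenius: the extension $0\to\O\to\mathcal{E}\to\dots$ coming from $H^2(X,nK_X)^*$ may split after pulling back by absolute Frobenius, so the naive strategy of locating a destabilizing subsheaf and pushing it forward would produce nothing. To handle this I would invoke the construction originally due to Ekedahl (see \cite{Eke88} and \cite[Section II.6]{Kol96}): one tracks the extension through Frobenius iterations together with the canonical connection $\nabla_{can}$ (Theorem \ref{can-conn}), using that the relevant homomorphisms $E_i\to (F^*\mathcal{E}/E_i)\otimes\Omega^1$ are forced to be nontrivial, so that even when the class dies one recovers a differential-form-valued map that ultimately produces a nonzero global section of some $mK_X$. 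A secondary technical point, which must be threaded through every step, is that all the vanishing/duality inputs have to be phrased via dualizing complexes rather than Serre duality for $\omega_X$, precisely because Cohen--Macaulayness of terminal singularities is unavailable for $5<p$ small; keeping the slope computations (with respect to the pair $(D_1,D_2)$ from Section \ref{pf-nc2}) compatible with this derived-category formalism is where the bookkeeping will be heaviest.
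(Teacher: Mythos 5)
Your overall architecture matches the paper's: dispose of $\nu=0$ by numerical triviality plus $\dim\Pic^0=0$, handle $\nu=2$ by a vanishing argument, and for $\nu=1$ split on the finiteness of $\pi_1^{\rm t}(X^{\rm sm})$, using Theorem \ref{nfvb} in the finite case and Hodge--Witt symmetry in the infinite case. However, two steps as you describe them would not go through. First, you never actually produce the nonzero class in $H^2(X,nK_X)$: vanishing of $H^0(X,mK_X)$ plus Serre duality forces nothing by itself. The engine is the inequality $\chi(X,\O_X(nK_X))\ge\chi(\O_X)>0$, obtained from Riemann--Roch on $Y$ --- and this is the one and only place where the hypothesis $\rho^*K_X\cdot c_2(Y)\ge 0$ enters, combined with $\chi(\O_X)>0$ coming from $\dim\Pic^0=0$ and $h^0(\omega_X)=0$. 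Your sketch announces that the $c_2$ hypothesis ``has been secured'' but never uses it; without the Euler characteristic computation both the $\nu=2$ case (where the paper shows $H^2(\O_X(n_0K_X))=0$ via Fujita plus Theorem \ref{tavsh} and concludes $h^0\ge\chi>0$ directly, with no extension class and no fundamental group dichotomy at all) and the $\nu=1$ case (where $h^0=0$ and $\chi>0$ give $h^2>0$) have no starting point.

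Second, your endgame for the subcase where the extension class dies under Frobenius pullback is not the one that works. You propose to recover ``a differential-form-valued map that ultimately produces a nonzero global section'' via the canonical connection; but $\nabla_{can}$ is used elsewhere (to destabilize $T_Y$ in Section 2), and no section is produced in this subcase. What the paper does when $\alpha\in\ker(F_H^{a*})$ is invoke Ekedahl's $\alpha_L$-torsor construction (Theorem \ref{torsor} and Corollary \ref{ndvh1}): the kernel class yields a purely inseparable degree-$p$ cover with $K_{X'}\sim\pi'^*(K_X+(p-1)L)$, bend-and-break on that cover produces $K_X|_H$-trivial rational curves (the lower bound $n > ((4r_0+2)K_X\cdot H^2+H^3)/(K_X\cdot H^2)$ is chosen precisely to make these numerics work), hence $n(X,K_X)\le 2$, and one concludes that $K_X$ is semiample by the nef reduction argument of Theorem \ref{nc2} --- the output is a fibration, not a section. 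Relatedly, even in the non-split case the passage from the destabilizing subsheaf $\O_X(A)\subset F_X^{e*}\mathcal{E}$ to a section of $mK_X$ is not automatic: it requires the Hodge-index argument of Lemma \ref{equiv-restr} on a surface section to show $A|_S\in\Q\cdot K_X|_S$, together with the Lefschetz-type Theorem \ref{thm-hl} and the vanishing of $\Pic^0(S)$ (again from the pro-$p$ hypothesis) to upgrade this to a linear equivalence $n_1A\sim n_2K_X$ on $X$. These omissions are concrete, not merely stylistic, and the proposal as written does not close them.
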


Here we note that $\dim {\rm Pic}^0(X)=\dim {\rm Pic}^0(Y)$ as the fibers of $\rho\colon Y\to X$ are covered by rational curves (see \cite[Theorem 1.1]{BW14}).

\subsection{Preliminaries} For readers' convenience, we recall some known results. The results in this section hold for any algebraically closed field of characteristic $p > 0$, i.e., we do not need $p>5$ here.

First recall the following well known result about Picard schemes.
\begin{thm}\label{pic}
Let $X$ be a normal projective variety.
Then the reduction $\Pic^0(X)_{\rm red}$ of $\Pic^0(X)$ is an abelian variety, and
$$\dim \Pic^0(X) \geq h^1(\O_X) - h^2(\O_X).$$
\end{thm}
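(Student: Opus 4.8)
The plan is to assemble the standard theory of the Picard scheme together with the deformation-theoretic estimate of its dimension at the identity, taking care over the characteristic-$p$ feature that $\Pic^0(X)$ may fail to be reduced. First I would invoke Grothendieck's existence theorem for the Picard functor of a projective scheme: $\Pic_{X/k}$ is representable by a group scheme locally of finite type over $k$, and its identity component $\Pic^0(X)$ is of finite type. Since $X$ is a normal projective variety over the algebraically closed (hence perfect) field $k$, it is integral with $H^0(X,\O_X)=k$, so that the $k$-points of $\Pic_{X/k}$ recover the usual group $\Pic(X)$ and $\Pic^0(X)$ is its classical identity component.

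For the structure statement, I would note that $\Pic^0(X)_{\rm red}$ is a reduced group scheme of finite type over the perfect field $k$, hence \emph{smooth}, since a reduced group scheme of finite type over a perfect field is smooth; moreover it is connected. Its properness is obtained not from normality directly but from the observation that $\Pic^0(X)$ is a connected component, and therefore closed, inside $\Pic^\tau_{X/k}$, the latter being projective by Grothendieck's theorem on the subscheme of numerically ($\tau$-)trivial classes; a closed component of a proper scheme is proper. A smooth, connected, proper group scheme over an algebraically closed field is an abelian variety, which gives the first assertion.

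For the inequality I would use the infinitesimal theory of line bundles. The Zariski tangent space of $\Pic_{X/k}$ at the origin is canonically identified with $H^1(X,\O_X)$, while the obstruction to extending a line bundle across a square-zero thickening lies in $H^2(X,\O_X)$. Consequently the complete local ring $\widehat{\O}_{\Pic^0(X),\,e}$ is a quotient of $k[[t_1,\dots,t_{h^1}]]$ by an ideal generated by at most $h^2(\O_X)$ elements, so Krull's principal ideal theorem forces $\dim \widehat{\O}_{\Pic^0(X),\,e}\ge h^1(\O_X)-h^2(\O_X)$. Because $\Pic^0(X)$ is a group scheme it is equidimensional, so its global dimension equals its local dimension $\dim_e \Pic^0(X)$ at the identity, and passing to the reduction does not change the dimension; this yields $\dim \Pic^0(X)\ge h^1(\O_X)-h^2(\O_X)$.

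The point demanding the most care — rather than a genuine obstacle — is the characteristic-$p$ phenomenon that $\Pic^0(X)$ need not be reduced, which is exactly why the statement is phrased for $\Pic^0(X)_{\rm red}$ and why $h^1(\O_X)$ can strictly exceed $\dim\Pic^0(X)$. One must therefore keep the two halves of the argument logically separate: the dimension bound is extracted from the full (possibly non-reduced) scheme structure through deformation theory, whereas the abelian-variety conclusion concerns only the reduction. The remaining inputs — representability and properness of $\Pic^\tau_{X/k}$, and the identifications of the tangent and obstruction spaces with $H^1(\O_X)$ and $H^2(\O_X)$ — are standard and can be cited from the FGA theory of the Picard scheme.
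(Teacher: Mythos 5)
Your argument is correct and is essentially the paper's own proof: the paper disposes of this statement by citing \cite[Remarks 9.5.15, 9.5.25]{FGA}, and those remarks consist of exactly the steps you spell out (representability, smoothness of the reduction over a perfect field, properness via $\Pic^\tau$, and the tangent/obstruction-space bound $\dim \geq h^1(\O_X)-h^2(\O_X)$ via Krull). One small correction of emphasis: the projectivity of $\Pic^\tau_{X/k}$ that you invoke is itself the point where geometric normality of $X$ enters (it fails for a nodal cubic, where $\Pic^\tau=\mathbb{G}_m$), so properness does come ``from normality,'' merely routed through Grothendieck's theorem rather than bypassing it.
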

\begin{proof}
See e.g. \cite[Remark 9.5.15, 9.5.25]{FGA}.
\end{proof}

Kawamata-Viehweg vanishing does not hold in positive characteristic, instead we will use a weaker vanishing on surfaces.
\begin{thm}\label{tavsh}
Let $X$ be a smooth projective surface and $L$ a nef and big line bundle on $X$. Then for $i>0$ and sufficiently large integer $n$
$$H^i(X, K_X + nL) = 0.$$
\end{thm}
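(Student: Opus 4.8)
The plan is to prove Theorem~\ref{tavsh} by reducing the vanishing of $H^i(X, K_X + nL)$ for a nef and big line bundle $L$ to a known weak vanishing theorem on surfaces. By Serre duality, $H^i(X, K_X + nL) \cong H^{2-i}(X, -nL)^*$, so for $i=2$ we must show $H^0(X, -nL) = 0$, which is immediate since $L$ is nef and big (hence $-nL$ has no sections for $n > 0$). The remaining and essential case is $i=1$, where we must show $H^1(X, K_X + nL) = 0$ for $n \gg 0$.

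\emph{First I would} recall that for a surface, nef and big divisors admit a decomposition into a semiample (or ample) part and an effective part via Zariski decomposition, or more directly one may invoke the Fujita-type or Szpiro-type vanishing cited in the excerpt. Indeed, the paper points to \cite{Szp79, Lan09, Tan15} for exactly this kind of weak vanishing. The cleanest route is to cite one of these: for instance, the results of Szpiro or Terpereau-Tanaka guarantee that on a smooth projective surface, if $L$ is nef and big then $H^i(X, K_X + nL)$ vanishes for $i > 0$ and $n$ sufficiently large. \emph{The key step} is therefore to identify the precise statement in the literature that applies to the nef-and-big (not necessarily ample) case and verify that our hypotheses match.

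\emph{Alternatively}, if one wants a self-contained argument, I would proceed as follows. Write $L$ as nef and big; by the Zariski decomposition on the surface $X$, one can find a birational modification or use the base-point-free part. More elementarily, since $L$ is big, for large $n$ we may write $nL \sim A + E$ with $A$ ample and $E$ effective (Kodaira's lemma). One then studies the exact sequence
\begin{equation*}
0 \to \O_X(K_X + nL - E) \to \O_X(K_X + nL) \to \O_E(K_X + nL) \to 0,
\end{equation*}
and controls the $H^1$ of the left term (which involves an ample divisor, where Serre vanishing applies for $n \gg 0$) and the cohomology on the one-dimensional scheme $E$. The contribution from $E$ is handled because $E$ is supported on a curve and the relevant line bundle has large degree on each component.

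\emph{The hard part will be} dealing with the fact that Kodaira-Viehweg vanishing genuinely fails in characteristic $p$, so one cannot simply quote it for the nef-and-big case; the subtlety is that counterexamples to strong vanishing exist, and one must stay within the regime (large twist $n$, effective/ample decomposition) where the weaker surface vanishing survives. I expect that the most efficient and intended approach is simply to cite the appropriate weak vanishing result from \cite{Szp79, Lan09, Tan15}, checking that ``nef and big, twisted by $n \gg 0$'' falls within its scope, rather than reconstructing the proof by hand.
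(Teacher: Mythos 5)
Your primary route --- reducing to the known weak vanishing theorems and citing them --- is exactly what the paper does: its proof of Theorem~\ref{tavsh} consists solely of the references to \cite[Proposition 2.1]{Szp79}, \cite[Proposition 2.24]{Lan09} and \cite[Theorem 0.3]{Tan15}, with no further argument. Be aware, however, that your self-contained alternative is not sound as written: for $L$ merely nef and big, the decomposition $n_0L \sim A+E$ may produce components of $E$ on which $L$ has degree zero, so the cohomology of $\mathcal{O}_E(K_X+nL)$ cannot be killed by a ``large degree on each component'' argument, and this is precisely where the genuine content of the cited results lies.
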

\begin{proof}This is first proved in \cite[Proposition 2.1]{Szp79}. For more general statements, see \cite[Proposition 2.24]{Lan09} and \cite[Theorem 0.3]{Tan15}.
\end{proof}

The following result on Hodge-Witt numbers plays an analogous role as Hodge symmetry of Hodge numbers in dimension three. For a general background, see \cite[Section 6.3]{Ill82}.
\begin{thm}\label{hw}
Let $Y$ be a smooth projective variety.

(i) $h_W^{i,j}(Y) \leq h^j(\Omega_Y^i)$;

(ii) $\chi(\Omega_Y^i) = \sum_j (-1)^jh^j(\Omega_Y^i) = \sum_j (-1)^jh_W^{i,j}(Y)$ where $h_W^{i,j}(Y)$ denote the Hodge-Witt numbers of $Y$;

(iii) if $\dim Y \leq 3$ then $h_W^{i,j}(Y) = h_W^{j,i}(Y)$.
\end{thm}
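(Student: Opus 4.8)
The plan is to read off all three parts from the structure of the de Rham--Witt complex $W\Omega_Y^{\bullet}$ and its slope spectral sequence $E_1^{i,j}=H^j(Y,W\Omega_Y^i)\Rightarrow H^{i+j}_{\mathrm{cris}}(Y/W)$, in the form developed by Illusie--Raynaud and Ekedahl. First I would attach to this spectral sequence the two families of invariants that control everything: the slope (multiplicity) numbers $m^{i,j}=\dim_K(H^j(Y,W\Omega_Y^i)\otimes_W K)$, which by Illusie--Raynaud count, with multiplicity, the slopes of $H^{i+j}_{\mathrm{cris}}(Y/W)\otimes_W K$ lying in the interval $[i,i+1)$, and the domino numbers $T^{i,j}$ attached to the $E_1$-differentials $H^j(W\Omega^i)\to H^j(W\Omega^{i+1})$, which measure the torsion linking consecutive columns. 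The Hodge--Witt numbers $h_W^{i,j}$ are built from $m^{i,j}$ and the $T^{i,j}$ by an explicit formula, and the whole argument is then a matter of tracking these two families.

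For (i) and (ii) I would use the comparison between $W\Omega_Y^{\bullet}$ and the Hodge complex $\Omega_Y^{\bullet}$ supplied by the fundamental exact sequences of the de Rham--Witt theory. This yields an identity expressing the defect $h^j(\Omega_Y^i)-h_W^{i,j}$ as a nonnegative combination of domino numbers; nonnegativity gives (i) immediately. For (ii), the first equality is the definition of $\chi(\Omega_Y^i)$, and the second follows by substituting the same identity into the alternating sum $\sum_j(-1)^j h_W^{i,j}$ and observing that the domino contributions cancel in the alternating sum over $j$, so that $\sum_j(-1)^j h_W^{i,j}=\sum_j(-1)^j h^j(\Omega_Y^i)=\chi(\Omega_Y^i)$. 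The cancellation is the only computational point, and it is forced by the bookkeeping of dominoes together with the finiteness coming from $Y$ being smooth and projective.

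The genuinely nonformal part is (iii), since it replaces Hodge symmetry $h^{i,j}=h^{j,i}$, which fails in characteristic $p$. I would obtain it from crystalline Poincaré duality on $W\Omega_Y^{\bullet}$, which identifies $R\Gamma(Y,W\Omega_Y^i)$ with the dual of $R\Gamma(Y,W\Omega_Y^{d-i})$ up to shift for $d=\dim Y$, and hence gives $m^{i,j}=m^{d-i,d-j}$ together with Ekedahl's companion duality for the domino numbers. The remaining transposition symmetry reduces, via the defining formula for $h_W^{i,j}$, to a symmetry of the Newton polygon of $H^{i+j}_{\mathrm{cris}}$ about its midpoint and a matching symmetry for the dominoes; both hold once $d\le 3$, where the positions of the possible dominoes are too constrained to produce an asymmetry. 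This is exactly Ekedahl's theorem, and the hypothesis $\dim Y\le 3$ enters here and only here --- in higher dimensions additional dominoes can break the transposition symmetry. I expect the main obstacle to be precisely this step: isolating the correct duality statements in \cite{Ill82} and \cite{Eke86} and checking the low-dimensional vanishing of the obstructing terms, whereas (i) and (ii) are essentially formal consequences of the domino formula.
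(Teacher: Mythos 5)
The paper offers no argument for Theorem \ref{hw} at all: its proof is a pure citation, (i) being Ekedahl's inequality, (ii) Crew's formula, and (iii) Ekedahl's symmetry theorem, all quoted from \cite{Ill82}*{Section 6.3} and \cite{Eke86}. Your sketch is, in substance, a faithful outline of how those cited results are actually proved --- slope spectral sequence, slope and domino numbers, the defect formula for (i), Euler-characteristic bookkeeping for (ii), and crystalline duality plus Newton-polygon symmetry plus the low-dimensional constraints on dominoes for (iii) --- so you are reconstructing the references rather than diverging from the paper. One imprecision should be corrected: the slope numbers $m^{i,j}$ entering the definition of $h_W^{i,j}$ are \emph{not} $\dim_K\bigl(H^j(Y,W\Omega_Y^i)\otimes_W K\bigr)$; they are the weighted counts $m^{i,j}=\sum_{\lambda\in[i-1,i)}(\lambda-i+1)\,h^{i+j}_\lambda+\sum_{\lambda\in[i,i+1)}(i+1-\lambda)\,h^{i+j}_\lambda$, which spread each fractional slope of $H^{i+j}_{\mathrm{cris}}$ over two adjacent columns, whereas $\dim_K H^j(W\Omega_Y^i)\otimes_W K$ counts every slope in $[i,i+1)$ with full multiplicity. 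The two agree only when the relevant slopes are integral: already for a supersingular elliptic curve one has $\dim_K H^1(W\mathcal{O}_Y)\otimes_W K=2$ but $m^{0,1}=1=h^{0,1}$, and with your identification both (i) and (iii) would fail in that example. Since you defer the actual definition of $h_W^{i,j}$ to ``an explicit formula,'' the plan still lands on the correct statements once the right $m^{i,j}$ is inserted, but this weighted bookkeeping of fractional slopes is precisely the point of the definition and should not be elided.
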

\begin{proof}
The inequality (i) is called Ekedahl's inequality  (see \cite[Theorem 6.3.10]{Ill82} or \cite[p.86 Theorem 3.3]{Eke86}). The formula (ii) is known as Crew's formula (see e.g., \cite[(6.3.5)]{Ill82} or or \cite[p.85 Theorem 3.2]{Eke86}). For (iii) refer to \cite[p.113, Corollary 3.3 (iii)]{Eke86}.
\end{proof}

In the proof we come up with a non-split extension but whose pull-back via some absolute Frobenius iterations gets split. To treat this phenomenon, the idea is to use Ekedahl's construction of torsors.
\begin{thm}\label{torsor}
Let $L$ be a line bundle on a smooth projective variety $X$.
Elements of the kernel of the Frobenius action
$$F_X^*: H^1(X, L) \to H^1(X, pL)$$
give rise to $\alpha_L$-torsors. Furthermore, the torsor arising from any non-trivial element of the kernel is non-trivial, that is a purely inseparable cover $\pi': X' \to X$ of degree $p$, and
$$K_{X'} \sim \pi'^*(K_X + (p-1) L).$$
\end{thm}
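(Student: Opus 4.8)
The plan is to carry out Ekedahl's construction, reading the torsor off the relative Frobenius on the total space of $L$. Let $\mathbb{V}(L)$ denote the total space of $L$, regarded as a smooth commutative group scheme over $X$ whose sheaf of sections is $L$ with fibrewise addition, and let $F\colon \mathbb{V}(L)\to \mathbb{V}(pL)$ be its relative Frobenius; on sheaves of sections this is the additive $p$-power map $s\mapsto s^{p}$. Its kernel $\alpha_L:=\ker F$ is a finite infinitesimal group scheme of order $p$ (fibrewise $\alpha_p$, twisted by $L$), and we obtain a short exact sequence of fppf sheaves
$$0\to \alpha_L\to \mathbb{V}(L)\xrightarrow{F}\mathbb{V}(pL)\to 0 .$$
Since $\mathbb{V}(L)$ and $\mathbb{V}(pL)$ are vector group schemes, their flat cohomology agrees with the coherent cohomology of $L$ and $pL$, and the map induced by $F$ is precisely the Frobenius action $F_X^{*}$. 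The long exact sequence in flat cohomology then reads
$$H^{0}(X,pL)\to H^{1}_{\mathrm{fl}}(X,\alpha_L)\xrightarrow{a}H^{1}(X,L)\xrightarrow{F_X^{*}}H^{1}(X,pL),$$
so that every class in $\ker F_X^{*}$ is $a(\beta)$ for some $\beta\in H^{1}_{\mathrm{fl}}(X,\alpha_L)$, i.e. for some $\alpha_L$-torsor, $\alpha_L$-torsors being classified by $H^{1}_{\mathrm{fl}}(X,\alpha_L)$ as $\alpha_L$ is finite flat. This proves the first assertion.

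For a nonzero $\xi\in\ker F_X^{*}$ I would pick a lift $\beta$ with $a(\beta)=\xi$; since $a(\beta)\neq0$, the torsor $\beta$ is non-trivial. As $\alpha_L$ is finite, flat and infinitesimal of order $p$, the associated torsor $\pi'\colon X'\to X$ is automatically finite flat of degree $p$ and a universal homeomorphism, hence purely inseparable; non-triviality of $\beta$ guarantees that $\pi'$ is a genuine cover rather than the split one. To make this explicit I would represent $\xi$ by a \v{C}ech cocycle $\{s_{ij}\}$ with $s_{ij}\in L(U_{ij})$; the vanishing $F_X^{*}\xi=0$ furnishes $\tau_i\in pL(U_i)$ with $s_{ij}^{p}=\tau_i-\tau_j$, and one glues the local models $\mathrm{Spec}\,\mathcal{O}_{U_i}[y_i]/(y_i^{p}-\tau_i)\subset\mathbb{V}(L)|_{U_i}$ along the $\alpha_L$-translations $y_i=y_j+s_{ij}$, the relation $s_{ij}^{p}=\tau_i-\tau_j$ being exactly what renders the gluing consistent. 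Thus $\pi'_*\mathcal{O}_{X'}$ is locally free of rank $p$, and $\pi'$ is a degree $p$ purely inseparable cover.

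It remains to compute $K_{X'}$. The sheaf $\pi'_*\mathcal{O}_{X'}$ is filtered by powers of the fibre coordinate, and its associated graded is $\bigoplus_{k=0}^{p-1}L^{-k}$, exactly as for a degree $p$ cyclic cover whose branch datum lies in $|pL|$; the sole difference is that our cover is wildly, everywhere-totally ramified, which affects neither the determinant nor the dualizing sheaf. Since $\mathcal{O}_{U_i}[y_i]/(y_i^{p}-\tau_i)$ is a local complete intersection over $\mathcal{O}_X$, it is Gorenstein and $\omega_{X'/X}$ is invertible with local generator dual to $y_i^{p-1}$; tracking the $L$-weight of this generator (the top graded piece being $L^{-(p-1)}$) identifies $\omega_{X'/X}\cong\pi'^{*}\big((p-1)L\big)$. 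With $\omega_{X'}\cong\pi'^{*}\omega_X\otimes\omega_{X'/X}$ this gives $K_{X'}\sim\pi'^{*}\big(K_X+(p-1)L\big)$. Equivalently, I could compactify $\mathbb{V}(L)$ to the $\mathbb{P}^{1}$-bundle $\mathbb{P}_X(\mathcal{O}_X\oplus L)$, take the closure of $X'$, and apply adjunction; keeping track of the relative $\mathcal{O}(1)$ and the relative canonical of the $\mathbb{P}^{1}$-bundle again produces the net twist $(p-1)L$ on $X'$.

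The step I expect to be most delicate is this last canonical bundle computation: because the naive different $f'(y)=p\,y^{p-1}$ vanishes in characteristic $p$ and the cover ramifies totally everywhere, one cannot read off $\omega_{X'/X}$ from a Jacobian and must instead pin down the $L$-weight of the Gorenstein generator with the correct sign. The safest route is the comparison above with the ordinary degree $p$ cyclic cover associated to $L$, whose associated graded $\bigoplus_{k=0}^{p-1}L^{-k}$ and canonical bundle $\pi'^{*}(K_X+(p-1)L)$ coincide with ours and fix the factor $(p-1)L$ unambiguously.
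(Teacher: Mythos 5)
Your argument is correct and is exactly the approach the paper relies on: its ``proof'' of Theorem~\ref{torsor} is only a pointer to Ekedahl \cite{Eke88} and \cite[Sec.~II.6]{Kol96}, and what you have written out --- the flat-cohomology sequence of $0\to\alpha_L\to\mathbb{V}(L)\xrightarrow{F}\mathbb{V}(pL)\to 0$, the \v{C}ech gluing of the local models $y_i^p=\tau_i$, and the computation of $\omega_{X'/X}$ from the filtration of $\pi'_*\mathcal{O}_{X'}$ with graded pieces $L^{-k}$ --- is precisely that construction. No discrepancy to report.
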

\begin{proof}
The construction is due to Ekedahl \cite{Eke88}. See \cite[Sec.II.6]{Kol96} for more details.
\end{proof}

\begin{cor}\label{ndvh1}
Let $X$ be a smooth projective variety of dimension $d$, $L$ a line bundle on $X$ such that
the kernel of $F_X^*: H^1(X, L) \to H^1(X, pL)$ is nontrivial, $M$ a nef Cartier divisor and $H$ an ample divisor on $X$.
If
$$-(K_X + (p-1) L) \cdot H^{d-1} > 2d M \cdot H^{d-1}$$
then the nef dimension $n(M) \leq d-1$.
\end{cor}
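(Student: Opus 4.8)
The plan is to produce a covering family of $M$-trivial rational curves on $X$; by the defining property of the nef reduction this forces the general fibre to be positive-dimensional, hence $n(M)\le d-1$. (Since $n(M)$ is unchanged by base field extension, I may assume $k$ uncountable as in Section 2.2.) The geometric input comes from unpacking the Frobenius-kernel hypothesis. A nonzero class $\xi\in\ker(F_X^*\colon H^1(X,L)\to H^1(X,pL))$ is precisely the datum of Ekedahl's construction behind Theorem \ref{torsor}: on a suitable affine cover the cover $\pi'\colon X'\to X$ is given by $z_i^p=b_i$ with $b_i\in H^0(U_i,\mathcal{O}_X(pL))$, and since $b_i-b_j=a_{ij}^p$ one has $db_i=db_j$, so the local forms $db_i$ glue to a nonzero global section
$$\omega\in H^0\bigl(X,\Omega^1_X\otimes\mathcal{O}_X(pL)\bigr),\qquad \omega\colon T_X\to \mathcal{O}_X(pL),$$
whose vanishing scheme is exactly the singular locus of $X'$. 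I then set $\mathcal{F}\subset T_X$ to be the saturation of $\ker\omega$.

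Next I record the properties of $\mathcal{F}$. Because $\omega$ is locally exact, the distribution $\ker\omega$ is involutive ($\omega\wedge d\omega=0$ locally) and $p$-closed (if $\xi(b)=0$ then $\xi^p(b)=\xi^{p-1}(\xi(b))=0$), so $\mathcal{F}$ is a $1$-foliation of rank $d-1$; being saturated in $T_X$ on a smooth variety, it is a subbundle away from a closed set of codimension $\ge 2$. From the codimension-one exact sequence $0\to\mathcal{F}\to T_X\xrightarrow{\omega}\mathcal{O}_X(pL-D_0)\to 0$, where $D_0\ge 0$ is the divisorial part of $\{\omega=0\}$, I obtain $c_1(\mathcal{F})=-K_X-pL+D_0$, and hence
$$c_1(\mathcal{F})\cdot N\ \ge\ (-K_X-pL)\cdot N\qquad\text{for every nef class }N.$$

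Now I run bend-and-break. Fix the ample divisor $H$ and let $C$ be the complete intersection of $d-1$ general members of $|mH|$ for $m\gg 0$; then $C$ is a smooth curve avoiding the codimension-$\ge2$ non-subbundle locus of $\mathcal{F}$, so both $X$ and $\mathcal{F}$ are smooth along $C$. Using the displayed inequality I compute
$$(p-1)\,c_1(\mathcal{F})\cdot C-K_X\cdot C\ \ge\ -p\,(K_X+(p-1)L)\cdot C\ >\ 0,$$
the last inequality by hypothesis. Applying Theorem \ref{rat-curve} to the nef divisor $M$, through every $y\in C$ there is a rational curve $B_y$ with
$$M\cdot B_y\ \le\ 2d\,\frac{p\,M\cdot C}{(p-1)c_1(\mathcal{F})\cdot C-K_X\cdot C}\ \le\ 2d\,\frac{M\cdot H^{d-1}}{-(K_X+(p-1)L)\cdot H^{d-1}}\ <\ 1,$$
again by hypothesis. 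As $M$ is nef and Cartier, $M\cdot B_y$ is a non-negative integer, so $M\cdot B_y=0$. Letting $C$ vary in its covering family, the curves $B_y$ sweep out $X$; through a very general point $x$ one of them passes, and by the nef reduction it must lie in the fibre over $x$, which is therefore positive-dimensional. Hence $n(M)\le d-1$.

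The main obstacle is the first step: converting the cohomological hypothesis into a usable geometric object, that is, producing the twisted form $\omega$ from $\xi$ and verifying that its saturated kernel is a genuine $1$-foliation with $c_1(\mathcal{F})\ge -K_X-pL$. The naive alternative---passing to the degree-$p$ cover $\pi'\colon X'\to X$ of Theorem \ref{torsor} and invoking ordinary bend-and-break with $-K_{X'}=\pi'^*\bigl(-(K_X+(p-1)L)\bigr)$ and ample $\pi'^*H$---is blocked by the fact that $X'$ is typically singular along $\{\omega=0\}$, a locus that can have codimension one, so that no moving complete-intersection curve meets it away from the singularities; working instead on the smooth $X$ with the corank-one foliation $\mathcal{F}$ sidesteps this, the remaining codimension-two and general-position statements being routine. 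For $d=2$ one could alternatively resolve $X'$ (resolution of surface singularities being unconditional) and check that the discrepancy divisors are $\pi'^*H$-trivial, so that the numerical inequality survives on the resolution before applying bend-and-break there.
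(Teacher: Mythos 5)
Your proof is correct, but it takes a genuinely different route from the paper's. The paper passes to Ekedahl's degree-$p$ purely inseparable cover $X'\to X$ of Theorem \ref{torsor}, takes the normalization $Z\to X'$ with composite $\pi\colon Z\to X$, writes $K_Z+\Delta\sim_{\mathbb{Q}}\pi^*(K_X+(p-1)L)$ with $\Delta\ge 0$ the conductor, and applies classical bend-and-break \cite[II Theorem 5.8]{Kol96} on $Z$ along general complete-intersection curves in $|k\pi^*H|$; since $Z$ is normal its singular locus has codimension $\ge 2$, so those curves lie in the smooth locus, and the conductor only improves the numerical estimate because $-K_Z\cdot C\ge -\pi^*(K_X+(p-1)L)\cdot C$. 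You instead stay on the smooth variety $X$, extract from the same torsor data the corank-one $1$-foliation $\mathcal{F}=\ker(\omega)$ with $c_1(\mathcal{F})=-K_X-pL+D_0$, $D_0\ge 0$, and invoke Langer's foliated bend-and-break (Theorem \ref{rat-curve}); the two computations yield the identical bound $M\cdot B<1$, and both are sound. The one inaccuracy is your closing claim that the cover-and-normalize route is ``blocked'' by codimension-one singularities of $X'$: normalization removes exactly that obstacle, and the resulting conductor term has the favorable sign, which is precisely how the paper proceeds. What your approach buys is that one never leaves $X$ and the foliation is made explicit; what the paper's buys is brevity, since it quotes the unfoliated bend-and-break and need not verify involutivity, $p$-closedness, or the subbundle condition for $\mathcal{F}$ along the moving curve.
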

\begin{proof}
Let $X'$ be the variety induced by a nonzero element of the kernel of $F_X^*: H^1(X, L) \to H^1(X, pL)$ as in Theorem \ref{torsor}. Denote by $Z \to X'$ the normalization of $X'$ and by $\pi: Z \to X$ the natural composition morphism. There exists an effective divisor $\Delta$ on $Z$, which arises from the conductor of the normalization $Z \to X'$, such that
$$K_Z + \Delta \sim_{\mathbb{Q}} \pi^*(K_X + (p-1) L).$$
For general $z \in Z$, there exists a curve $C$ arising from the intersections of $d-1$ divisors in $|k\pi^*H|$, such that $z \in C$ and that $Z$ is smooth along $C$.
Applying bend-and-break \cite[II Theorem 5.8]{Kol96}, there exists a rational curve $\Gamma_z$ passing through $z$ such that
\begin{eqnarray*}
\pi^*M\cdot \Gamma_z & \leq& 2d\frac{\pi^*M \cdot (k\pi^*H)^{d-1}}{-K_Z \cdot (k\pi^*H)^{d-1}}\\
& \leq& 2d\frac{\pi^*M \cdot (k\pi^*H)^{d-1}}{-\pi^*(K_X + (p-1) L) \cdot (k\pi^*H)^{d-1}}\\
&  \leq &\frac{2dpk^{d-1}M \cdot H^{d-1}}{-pk^{d-1}(K_X + (p-1) L) \cdot H^{d-1}} \\
& <& 1,
\end{eqnarray*}
where the ratio being smaller than $1$ is from the assumption. Since $M$ is Cartier the intersection number $\pi^*M\cdot \Gamma_z = 0$. We conclude that the nef dimension $n(M) \leq d-1$.
\end{proof}

In the same spirit, one can show the following vanishing result.
\begin{cor}\label{vanishing-of-h1}
Let $X$ be a smooth projective variety and $L$ an ample line bundle on $X$. If for every curve $C$ the inequality
$$((p-1)L -K_X)\cdot C > \dim X + 1$$
holds, then $H^1(X, -L) = 0$.
\end{cor}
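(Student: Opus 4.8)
The plan is to argue by contradiction: assuming $H^1(X,-L)\neq 0$, I will manufacture a purely inseparable cover whose canonical class is so negative on every curve that it violates the bend-and-break bound on the anticanonical degree of extremal rational curves.

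The first step is to feed a suitable class into Theorem \ref{torsor}. Since that theorem only detects the kernel of Frobenius, I cannot use a class in $H^1(X,-L)$ directly; instead I would consider the $p$-semilinear iterates
$$H^1(X,-L)\xrightarrow{F_X^*}H^1(X,-pL)\xrightarrow{F_X^*}\cdots\xrightarrow{F_X^*}H^1(X,-p^eL).$$
By Serre duality $H^1(X,-p^eL)\cong H^{\dim X-1}(X,K_X+p^eL)^\vee$, which vanishes for $e\gg 0$ by Serre vanishing (here one uses $\dim X\geq 2$, which is the range in which the corollary is applied). A composite of $p$-semilinear maps is injective whenever each factor is, so since the source is nonzero while the target vanishes, some factor $F_X^*\colon H^1(X,-p^jL)\to H^1(X,-p^{j+1}L)$ must have nontrivial kernel.

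Next I would apply Theorem \ref{torsor} to the line bundle $M:=-p^jL$. A nonzero kernel class produces a purely inseparable degree-$p$ cover $\pi'\colon X'\to X$ with $K_{X'}\sim\pi'^*(K_X+(p-1)M)=\pi'^*(K_X-(p-1)p^jL)$. Passing to the normalization $\nu\colon Z\to X'$ and setting $\pi=\pi'\circ\nu$, the conductor yields an effective divisor $\Delta\geq 0$ on $Z$ with
$$K_Z+\Delta\sim_{\mathbb{Q}}\pi^*\big(K_X-(p-1)p^jL\big),$$
exactly as in the proof of Corollary \ref{ndvh1}. The key estimate is then that for every curve $C\subset Z$, using $\Delta\geq 0$, $p^j\geq 1$, ampleness of $L$, and that the finite map $\pi$ sends $C$ to an effective $1$-cycle $\pi_*C$ supported on curves,
$$-K_Z\cdot C\ \geq\ \pi^*\big((p-1)p^jL-K_X\big)\cdot C\ \geq\ \big((p-1)L-K_X\big)\cdot\pi_*C\ >\ \dim X+1,$$
the last inequality being the hypothesis applied componentwise. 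Hence $-K_Z\cdot C>\dim Z+1$ for every curve $C$, so in particular $K_Z$ is not nef.

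The contradiction then comes from bend-and-break: once $K_Z$ is not nef, Mori's theorem produces a rational curve $\Gamma$ with $0<-K_Z\cdot\Gamma\leq\dim Z+1$, contradicting the strict inequality above; therefore $H^1(X,-L)=0$. I expect the main obstacle to be precisely this final step, since $Z$ is only the normalization of a generally non-normal inseparable cover and need not be smooth, whereas the clean length bound $\dim Z+1$ is a smooth-variety statement. To secure it I would run bend-and-break along a curve lying in the smooth locus $Z^{\mathrm{sm}}$ — a general complete intersection of ample divisors through a general smooth point, along which $Z$ is smooth by Bertini — just as is done for Corollary \ref{ndvh1} via \cite[II Theorem 5.8]{Kol96}, and then break the resulting rational curve down to an extremal one; ensuring that the deformations stay inside $Z^{\mathrm{sm}}$ so that the bound $\dim Z+1$ genuinely applies is the delicate point.
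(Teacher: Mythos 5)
Your overall strategy --- iterate Frobenius to locate a nontrivial kernel, feed it to Ekedahl's torsor construction (Theorem \ref{torsor}), normalize, and run bend-and-break --- is exactly the mechanism behind this statement: the paper's own proof is a one-line citation of \cite[II. Theorem 6.2]{Kol96}, which \emph{is} this corollary in contrapositive form, proved in just this way. Your first two steps are sound, and your remark that the Serre-duality/Serre-vanishing step needs $\dim X\ge 2$ is a genuinely useful observation (the statement as written fails for $X=\mathbb{P}^1$, $L=\mathcal{O}_{\mathbb{P}^1}(2)$; the paper only ever applies the corollary to surfaces).

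The final step, however, contains a real gap that you flag but do not close. First, the inequality $-K_Z\cdot C\ \ge\ \pi^*\bigl((p-1)p^jL-K_X\bigr)\cdot C$ is not valid ``for every curve $C\subset Z$'': it uses $\Delta\cdot C\ge 0$, which can fail when $C\subseteq \mathrm{Supp}\,\Delta$, so you have not shown that $-K_Z$ is positive on all curves. Second, and more seriously, the appeal to ``Mori's theorem'' producing $\Gamma$ with $0<-K_Z\cdot\Gamma\le\dim Z+1$ invokes the cone theorem with its length bound on $Z$, which is only normal and generally singular; that bound is a smooth-variety statement, and no substitute is available here in characteristic $p$. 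The honest fallback --- \cite[II. Theorem 5.8]{Kol96} applied along a general complete-intersection curve in $Z^{\mathrm{sm}}$, exactly as in Corollary \ref{ndvh1}, with the nef divisor $H=\pi^*((p-1)p^jL-K_X)$ --- yields a rational curve $B$ with $H\cdot B\le 2\dim Z$, hence the corollary with the constant $2\dim X$ in place of $\dim X+1$; improving this to $\dim X+1$ requires breaking rational curves whose deformations may leave $Z^{\mathrm{sm}}$, which is precisely the point you concede is unresolved. So as written your argument proves a weaker statement (one that would, as it happens, still suffice for the paper's applications, where the numerics have plenty of slack), but not the stated bound; for that one should simply invoke \cite[II. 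Theorem 6.2]{Kol96} as the paper does.
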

\begin{proof}
This follows from \cite[II. Theorem 6.2]{Kol96}.
\end{proof}

The hard Lefschetz theorem will be used to construct global sections of a line bundle by considering its restriction on a sufficiently ample divisor.
\begin{thm}\label{thm-hl}
Let $X$ be a projective normal variety of dimension $d \geq 3$. Let $H$ be a smooth ample divisor contained in the smooth locus $X^{sm}$ of $X$ such that,
$$H^1(H, \mathcal{O}_H(-nH|_H)) = 0,~\mathrm{for~any} ~n\geq 1.$$
Then $\Pic(X) \hookrightarrow \Pic(H)$.
\end{thm}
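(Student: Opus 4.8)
The plan is to deduce the injectivity $\Pic(X) \hookrightarrow \Pic(H)$ from the exponential-type sequences comparing line bundles on thickenings of $H$ inside $X$, together with the vanishing hypothesis on $H^1(H, \O_H(-nH|_H))$. First I would observe that since $H$ is contained in the smooth locus $X^{\sm}$ and is itself smooth and ample, I may work formally along $H$: the key object is the formal completion $\widehat{X}$ of $X$ along $H$, and by the formal function / Grothendieck existence philosophy a line bundle on $X$ is determined by its restriction to this completion. So it suffices to show that $\Pic(X) \to \Pic(\widehat{X})$ is injective and that $\Pic(\widehat{X}) \to \Pic(H)$ is injective; the composite is restriction to $H$.

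For the second injectivity I would compare the Picard groups of the infinitesimal neighborhoods $H_n$ defined by the ideal $\mathcal{I}_H^{n}$. The standard exact sequence of sheaves of abelian groups
$$0 \to \mathcal{I}_H^{n}/\mathcal{I}_H^{n+1} \to \O_{H_{n+1}}^{*} \to \O_{H_n}^{*} \to 0$$
(using that $\mathcal{I}_H^{n}/\mathcal{I}_H^{n+1}$, endowed with additive structure via $1+t \mapsto t$, is a coherent sheaf on $H$) gives on cohomology a sequence in which the obstruction to lifting/distinguishing line bundles from $H_n$ to $H_{n+1}$ lives in $H^1\big(H, \mathcal{I}_H^{n}/\mathcal{I}_H^{n+1}\big)$. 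Since $H$ is a divisor, $\mathcal{I}_H/\mathcal{I}_H^2 = \O_H(-H|_H)$ and more generally $\mathcal{I}_H^{n}/\mathcal{I}_H^{n+1} = \O_H(-nH|_H)$. The hypothesis $H^1(H, \O_H(-nH|_H)) = 0$ for every $n \geq 1$ therefore kills all these obstructions, so the maps $\Pic(H_{n+1}) \to \Pic(H_n)$ are injective for all $n$, and passing to the inverse limit $\Pic(\widehat{X}) \to \Pic(H)$ is injective.

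For the first injectivity, that restriction $\Pic(X) \to \Pic(\widehat{X})$ is injective, I would use that $H$ is ample together with a formal-GAGA / Lefschetz-type comparison: a line bundle on $X$ whose formal completion along $H$ is trivial must be trivial. Concretely, if $\mathcal{L} \in \Pic(X)$ restricts trivially to every $H_n$, then the compatible trivializing sections on the $H_n$ assemble, by the theorem on formal functions, to a section of $\mathcal{L}$ over $\widehat{X}$ trivializing it there; ampleness of $H$ then forces $\mathcal{L}$ to be trivial on $X$ itself. (Here the normality of $X$ and the containment $H \subset X^{\sm}$ guarantee the relevant sheaf-theoretic statements behave as in the smooth case near $H$.)

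The main obstacle I expect is the passage from the formal neighborhood back to $X$, i.e. justifying the first injectivity rigorously: the algebraization step genuinely uses the ampleness of $H$ and some form of Grothendieck's existence theorem, and one must be careful that $X$ is only normal (not smooth) away from $H$, so the argument should be localized near $H$ where everything is smooth before invoking global ampleness. The infinitesimal-neighborhood computation of the second half is, by contrast, essentially formal once one identifies $\mathcal{I}_H^{n}/\mathcal{I}_H^{n+1} \cong \O_H(-nH|_H)$ and feeds in the vanishing hypothesis; the delicate point there is only to set up the exponential-type sequence correctly in characteristic $p$, where one restricts to the subgroup of line bundles of the appropriate torsion-free type, but since we only need injectivity (not surjectivity) of the restriction maps, the vanishing of the $H^1$ obstruction groups is exactly what is required.
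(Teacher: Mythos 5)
The paper does not actually prove Theorem \ref{thm-hl}: its ``proof'' is a citation to \cite[XII, Corollary 3.6]{SGA2}. What you have written is, in substance, a reconstruction of the standard Grothendieck--Lefschetz argument behind that cited result, so your route is the ``same'' proof, just spelled out. Your Step B is correct and complete in outline: since $H\subset X^{\rm sm}$ and $X$ is normal, $H$ is Cartier, $\I_H^n/\I_H^{n+1}\cong \mathcal{O}_H(-nH|_H)$, and the truncated exponential $t\mapsto 1+t$ is a group homomorphism in \emph{any} characteristic because the ideal $\I_H^n/\I_H^{n+1}$ squares to zero in $\mathcal{O}_{H_{n+1}}$ --- so your hedge about ``torsion-free type'' in characteristic $p$ is unnecessary. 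The hypothesis $H^1(H,\mathcal{O}_H(-nH|_H))=0$ kills both the obstruction groups (giving injectivity of $\Pic(H_{n+1})\to\Pic(H_n)$) and, via the same long exact sequence, the surjectivity of $H^0(\mathcal{O}_{H_{n+1}}^*)\to H^0(\mathcal{O}_{H_n}^*)$ needed to choose compatible trivializations, so no $\varprojlim^1$ issue arises.

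The one thin spot is your Step A, and there ``Grothendieck's existence theorem'' is not the right tool. What you actually need is: if $\mathcal{L}|_{H_n}$ is trivial for all $n$, then $\mathcal{L}\cong\mathcal{O}_X$. This follows once $H^0(X,\mathcal{L})\to H^0(H_n,\mathcal{L}|_{H_n})$ is surjective for one sufficiently large $n$, i.e.\ once $H^1(X,\mathcal{L}(-nH))=0$ for $n\gg 0$; note this is a \emph{negative} twist, so it is not Serre vanishing but the Enriques--Severi--Zariski lemma for normal projective varieties of dimension $\ge 2$ --- this is exactly where the normality of $X$ and $d\ge 3$ enter, and it is recorded in this very paper as Lemma \ref{vanishing-of-refl}. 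With it, lift a trivialization $\varphi$ of $\mathcal{L}|_{H_n}$ and its inverse $\varphi^{-1}$ to sections $s\in H^0(X,\mathcal{L})$ and $t\in H^0(X,\mathcal{L}^{-1})$; then $s\otimes t\in H^0(X,\mathcal{O}_X)=k$ restricts to $1$ on $H_n$, hence equals $1$, so $\mathcal{L}\cong\mathcal{O}_X$. This also shows the formal scheme $\widehat{X}$ is dispensable: a single large thickening $H_n$, together with your Step B induction, suffices.
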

\begin{proof}
This is \cite[XII, Corollary 3.6]{SGA2}.
\end{proof}

We will need the following simple vanishing of $H^1(E \otimes L^{-1})$ for a reflexive sheaf $E$ and a sufficiently ample line bundle $L$.
\begin{lem}\label{vanishing-of-refl}
Let $X$ be a projective normal variety of dimension with $\dim X \geq 2$. Let $E$ be a torsion free reflexive sheaf and $H$ a very ample Cartier divisor on $X$. Then $H^1(X, E(-nH))=0$ for $n\gg 0$.
\end{lem}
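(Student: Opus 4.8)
The plan is to reduce the statement to the vanishing of a single group of global sections via Grothendieck--Serre duality, which is the natural setting here precisely because the paper has already committed to dualizing complexes rather than assuming $X$ is Cohen--Macaulay. Write $d=\dim X\ge 2$ and fix a dualizing complex $\omega_X^\bullet$ on $X$, normalized so that its cohomology sheaves $\mathcal{H}^q(\omega_X^\bullet)$ vanish outside the range $-d\le q\le 0$. Since $E$ is reflexive on the normal variety $X$, it satisfies Serre's condition $S_2$; this is the only property of $E$ that the argument uses.

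First I would dualize. Because $\mathcal{O}_X(nH)$ is locally free, duality gives, for every $n$,
$$H^1\bigl(X,E(-nH)\bigr)^\vee\;\cong\;\mathbb{H}^{-1}\bigl(X,\,R\mathcal{H}om(E,\omega_X^\bullet)\otimes\mathcal{O}_X(nH)\bigr).$$
Set $\mathcal{D}E:=R\mathcal{H}om(E,\omega_X^\bullet)$. The key input is that the $S_2$ condition controls the low cohomology sheaves of the dual complex: one has $\dim\Supp\,\mathcal{H}^{-i}(\mathcal{D}E)\le i-2$ for all $i<d$, so in particular $\mathcal{H}^{0}(\mathcal{D}E)=\mathcal{H}^{-1}(\mathcal{D}E)=0$ (the cases $i=0,1$, for which this range is available precisely because $d\ge 2$). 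Thus every nonzero cohomology sheaf of $\mathcal{D}E$ lies in a cohomological degree $q\le -2$.

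Then I would run the local-to-global (hypercohomology) spectral sequence
$$E_2^{p,q}=H^p\bigl(X,\mathcal{H}^{q}(\mathcal{D}E)\otimes\mathcal{O}_X(nH)\bigr)\ \Longrightarrow\ \mathbb{H}^{p+q}\bigl(X,\mathcal{D}E\otimes\mathcal{O}_X(nH)\bigr).$$
Every contribution to total degree $-1$ has $q\le -2$, hence $p\ge 1$; since only finitely many sheaves $\mathcal{H}^{q}(\mathcal{D}E)$ occur, Serre vanishing forces all these $E_2^{p,q}$ with $p\ge 1$ to vanish once $n\gg 0$. Therefore $\mathbb{H}^{-1}(X,\mathcal{D}E(nH))=0$ for $n\gg 0$, and the duality isomorphism yields $H^1(X,E(-nH))=0$ for $n\gg 0$, as claimed. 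The case $E=\mathcal{O}_X$ recovers the classical Enriques--Severi--Zariski lemma, and when $\dim X=2$ the variety is automatically Cohen--Macaulay, so $\omega_X^\bullet$ may be replaced by $\omega_X[2]$ and the argument collapses to ordinary Serre duality together with $\mathcal{E}xt^{q}(E,\omega_X)=0$ for $q\ge 1$.

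The step I expect to be the main obstacle is the second one: pinning down the exact relationship between the $S_2$ property of $E$ and the vanishing of $\mathcal{H}^{0}(\mathcal{D}E)$ and $\mathcal{H}^{-1}(\mathcal{D}E)$. This is a local statement that must be verified with the correct normalization of $\omega_X^\bullet$ and careful codimension bookkeeping along the possibly nonempty non-Cohen--Macaulay locus of $X$, whose codimension is $\ge 3$ by normality. It is exactly here, together with the constraint $i<d$, that the hypothesis $\dim X\ge 2$ is used and the corresponding assertion for curves fails.
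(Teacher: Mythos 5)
Your proposal is correct, and it takes a genuinely different route from the paper's. The paper's own proof is an elementary two-step reduction: since $E$ is reflexive, dualizing a finite presentation of $E^\vee$ exhibits $E$ as a kernel $0 \to E \to V_1 \to V_2$ of a map of vector bundles; twisting by $-nH$ and chasing the long exact sequence traps $H^1(X, E(-nH))$ between $H^0$ of a subsheaf of $V_2(-nH)$, which vanishes because that subsheaf is torsion-free, and $H^1(X, V_1(-nH))$, which vanishes for $n \gg 0$ by the Enriques--Severi--Zariski lemma \cite[III 7.8]{Har77} --- the one place where normality and $\dim X \ge 2$ enter. Your argument instead works with $E$ directly through Grothendieck duality, and every step checks out: on a normal variety, reflexive is indeed equivalent to torsion-free plus $S_2$; the support bound $\dim \Supp\, \mathcal{H}^{-i}(R\mathcal{H}om(E, \omega_X^\bullet)) \le i-2$ for $i < d$ is the standard local-duality translation of $S_2$ (Matlis duality identifies the stalk of $\mathcal{H}^{-i}$ at a point whose closure has dimension $e$ with the dual of $H^{i-e}_{\mathfrak{m}}$ of the stalk of $E$, which vanishes below the depth); and the spectral-sequence-plus-Serre-vanishing finish is airtight. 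The step you flag as the main obstacle is indeed the crux, but it is a known, citable lemma, so there is no gap. As for what each route buys: the paper's is shorter and needs essentially nothing beyond \cite{Har77} as a black box; yours isolates exactly which hypotheses do what ($S_2$ of $E$, and $d \ge 2$), applies verbatim to arbitrary $S_2$ coherent sheaves and yields the stronger statement that an $S_k$ sheaf satisfies $H^j(X, E(-nH)) = 0$ for all $j < \min(k, \dim X)$ and $n \gg 0$, reproves Enriques--Severi--Zariski rather than quoting it, and harmonizes with the dualizing-complex formalism the paper is forced to use elsewhere (e.g. in Case (2) of the proof of Theorem \ref{thm-nv}), precisely because terminal threefold singularities are not known to be Cohen--Macaulay in this range of characteristics.
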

\begin{proof}This is well known when $E$ is a vector bundle, see \cite[III 7.8]{Har77}. For a general torsion free reflexive sheaf $E$ on a normal projective variety, we know that there exists an exact sequence
$$0\to E\to V_1\to V_2,$$
where $V_1$ and $V_2$ are vector bundles. Then we conclude by a simple diagram chase.
\end{proof}

\subsection{Proof of Theorem  \ref{thm-nv}} In this section, we aim to finish proving Theorem  \ref{thm-nv}. We can assume the smooth resolution $\rho: Y \to X$ is isomorphic over $X^{\rm sm}$ and
$$K_X\cdot \rho_*c_2(Y) \ge 0.$$

Since $\dim \Pic^0(X) = 0$ and $h^3(\O_X) = h^0(X, \omega_X)$ (\cite[p.241 Lemma 7.4]{Har77}), by Theorem \ref{pic}
\begin{eqnarray*}
\chi(\O_X) &= & h^0(\O_X) - (h^1(\O_X) - h^2(\O_X)) - h^3(\O_X) \\
 &\geq & h^0(\O_X) - h^0(X, \omega_X).
\end{eqnarray*}
We only need to consider the case $h^0(X, \omega_X) = 0$, thus we can assume $\chi(\mathcal{O}_X)>0$.
\begin{lem}
For any integer $n>0$ divisible by the Cartier index $r_0$ of $K_X$, $$\chi(X, \mathcal{O}_X(nK_X))>0.$$
\end{lem}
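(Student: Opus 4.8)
The plan is to use the Riemann--Roch theorem on the terminal threefold $X$ and show that the leading terms of $\chi(X, \mathcal{O}_X(nK_X))$ as a function of $n$ are all nonnegative, with the constant term $\chi(\mathcal{O}_X) > 0$. Since $X$ has terminal singularities and we are in the setting $0 \le \nu(K_X) \le 2$, we have $K_X^3 = 0$, so the cubic term in $n$ vanishes. Thus the Euler characteristic will be at most quadratic in $n$, and I would identify the coefficients of the $n^2$ and $n^1$ terms and argue each is $\ge 0$.

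First I would write down the singular Riemann--Roch formula. Because terminal singularities need not be Cohen--Macaulay in this generality, I would work with the dualizing complex rather than assuming $\omega_X$ is a line bundle; as flagged in the introduction this is one of the technical points the paper handles. The upshot is an expression of the shape
$$
\chi(X, \mathcal{O}_X(nK_X)) = \frac{n^3}{6}K_X^3 + \frac{n^2}{4}K_X\cdot(\text{something involving } K_X) + \frac{n}{12}K_X\cdot c_2 + \chi(\mathcal{O}_X) + (\text{correction terms}),
$$
where the linear term, up to the contribution of the (finitely many, point) singularities, is governed by $-\tfrac{1}{12}K_X\cdot c_2(Y)$ pulled back through $\rho$. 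The key input here is the running hypothesis $K_X \cdot \rho_* c_2(Y) \ge 0$, equivalently $\rho^*K_X \cdot c_2(Y) \ge 0$, which makes the coefficient of $n$ nonnegative. The $n^2$ coefficient is a multiple of $K_X^3 = 0$ by the nef-and-numerically-degenerate hypothesis $\nu(K_X) < 3$, so it too contributes nonnegatively (in fact vanishes once one accounts for $K_X^3=0$). Combined with $\chi(\mathcal{O}_X) > 0$ established just above, every coefficient of the polynomial $\chi(X, \mathcal{O}_X(nK_X))$ in $n$ is $\ge 0$ and the constant term is strictly positive, giving $\chi(X, \mathcal{O}_X(nK_X)) > 0$ for all $n > 0$ divisible by $r_0$.

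The main obstacle I anticipate is making the singular Riemann--Roch computation rigorous on a terminal threefold that may fail to be Cohen--Macaulay, and in particular correctly accounting for the local contributions of the terminal singular points to the various coefficients. One must verify that the point contributions do not spoil the sign of the linear coefficient; this is where the terminality of $X$ and the comparison $K_X\cdot \rho_* c_2(Y) = \rho^*K_X\cdot c_2(Y)$ (valid since $\rho$ is an isomorphism away from isolated points) are essential, so that the global intersection number controls the relevant term. A secondary care point is bookkeeping the $\mathbb{Q}$-Cartier nature of $K_X$: one works with multiples $nK_X$ for $n$ divisible by $r_0$ so that $\mathcal{O}_X(nK_X)$ is a genuine line bundle, which is exactly the divisibility hypothesis in the statement. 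Once the coefficients are pinned down, the positivity is immediate.
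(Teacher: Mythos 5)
Your target is right (show every coefficient of the polynomial $\chi(X,\mathcal{O}_X(nK_X))$ in $n$ is nonnegative and the constant term is $\chi(\mathcal{O}_X)>0$), and you have correctly identified the numerical inputs: $K_X^3=0$ from $\nu(K_X)<3$, the hypothesis $K_X\cdot\rho_*c_2(Y)\ge 0$, and $\chi(\mathcal{O}_X)>0$. But the route you propose — a singular Riemann--Roch formula directly on $X$, with unspecified local correction terms at the singular points — is exactly the step you flag as the ``main obstacle'' and never resolve. In characteristic $p$, on a terminal threefold that may not be Cohen--Macaulay, there is no off-the-shelf singular Riemann--Roch with the control you need over the point contributions, so as written the argument has a genuine gap: you cannot rule out that those correction terms depend on $n$ and spoil the sign.

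The missing idea, which is how the paper proceeds, is to never do Riemann--Roch on $X$ at all. Compute $\chi(Y,\rho^*\mathcal{O}_X(nK_X))$ on the smooth resolution $Y$, where ordinary Riemann--Roch applies; since $\rho$ is an isomorphism away from finitely many points, $(\rho^*K_X)^2\cdot K_Y=K_X^3=0$ and the formula collapses to $\frac{n}{12}K_X\cdot\rho_*c_2(Y)+\chi(\mathcal{O}_Y)\ge\chi(\mathcal{O}_Y)$. Then transfer to $X$ by noting that $R^i\rho_*\mathcal{O}_Y$ for $i>0$ is supported at points, so tensoring with the line bundle $\mathcal{O}_X(nK_X)$ does not change its length; hence
$$\chi(Y,\rho^*\mathcal{O}_X(nK_X))=\chi(X,\mathcal{O}_X(nK_X))+\chi(\mathcal{O}_Y)-\chi(\mathcal{O}_X),$$
i.e.\ your ``correction terms'' are independent of $n$ and equal $\chi(\mathcal{O}_X)-\chi(\mathcal{O}_Y)$. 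This gives $\chi(X,\mathcal{O}_X(nK_X))\ge\chi(\mathcal{O}_X)>0$ with no need for any Riemann--Roch statement on the singular variety, and no need to discuss Cohen--Macaulayness or dualizing complexes at this step.
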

\begin{proof}
Applying Riemann-Roch formula, we have
\begin{align*}
&\chi(Y, \rho^*\mathcal{O}_X(nK_X)) \\
&= \frac{2n^3 - 3n^2}{12}(\rho^*K_X^3) + \frac{n}{12}(\rho^*K_X)\cdot(K_Y^2 + c_2(Y)) + \chi(\mathcal{O}_Y)\\
& = \frac{2n^3 - 3n^2}{12}K_X^3 + \frac{n}{12}K_X\cdot(K_X^2 + \rho_*c_2(Y)) + \chi(\mathcal{O}_Y) \\
& =\frac{n}{12}K_X\cdot \rho_*c_2(Y) + \chi(\mathcal{O}_Y) \\
& \geq  \chi(\mathcal{O}_Y).
\end{align*}

For $i>0$, the sheaves $R^i\rho_* \O_Y$ are supported at the union of those isolated irrational singularities on $X$.
By
\begin{align*}
\chi(Y, \rho^*\mathcal{O}_X(nK_X)) & = \chi(X, R\rho_*\O_Y \otimes\mathcal{O}_X(nK_X)) \\
& = \chi(X, \mathcal{O}_X(nK_X)) - length(R^1\rho_* \O_Y) + length(R^2\rho_* \O_Y)\\
& = \chi(X, \mathcal{O}_X(nK_X)) + \chi(\mathcal{O}_Y) - \chi(\mathcal{O}_X)
\end{align*}
we conclude that
$$\chi(X, \mathcal{O}_X(nK_X)) = (\chi(Y, \rho^*\mathcal{O}_X(nK_X)) - \chi(\mathcal{O}_Y)) + \chi(\mathcal{O}_X) \geq \chi(\mathcal{O}_X) > 0.$$
\end{proof}

If $\nu(X, K_X) = 0$, then $K_X \equiv 0$, and since $\dim \Pic^0(X) = 0$, it follows that $K_X \sim_{\Q} 0$. Thus following the proof of \cite[Theorem 9.0.6]{Kol92}, we shall divide the argument into three cases.

\bigskip

\noindent \textbf{Case (1)}: $\nu(K_X)=2$.

\medskip

Let $H$ be a smooth ample divisor on $X$ contained in the smooth locus $X^{\rm sm}$ of $X$. The exact sequence below
$$0 \to \O_X(nK_X ) \to \O_X(nK_X + H) \to \O_H(nK_X + H) \to 0,$$
induces a cohomological long exact sequence by taking cohomology
$$\to H^1(\O_H(nK_X + H)) \to H^2(\O_X(nK_X)) \to H^2(\O_X(nK_X + H)) \to\cdots.$$

Since $K_X$ is nef, applying Fujita vanishing (see \cite{Keeler03} or \cite[1.4.35-36]{Laz04}) we can assume $H$ is ample enough so that for any $n >0$ divisible by $r_0$,
$$H^2(X, \O_X(nK_X + H) )= 0.$$
Note that $K_X|_H$ is a nef big Cartier divisor. Applying Theorem \ref{tavsh} for $n_0\gg 0$ divisible by $r_0$
$$H^1(\O_H(n_0K_X + H)) \cong H^1(\O_H(K_H + (n_0 - 1)K_X|_H)) = 0.$$
Therefore $H^2(\O_X(n_0K_X)) = 0$, and in turn we have
$$h^0(\mathcal{O}_X(n_0K_X)) \geq \chi(\mathcal{O}_X(n_0K_X)) >0.$$
This finishes the proof of Case (1).

\bigskip

\begin{rem}In the following we will focus on the case $\nu(K_X)=1$.  As before, we can assume $h^2(\O_X(nK_X)) >0$.
We will split the case by considering whether $ \pi_1^{\rm t}(X^{\rm sm})$ is finite or not. Here $ \pi_1^{\rm t}(X^{\rm sm})$  is defined as the  tame fundamental group $\pi^{\rm t}_1(\overline{X},D)$ where $X^{\rm sm}\subset \overline{X}$ is a compactification with the boundary $D=\overline{X}\setminus X^{\rm}$ being simple normal crossing (see Sec. \ref{fdmgp}). For other equivalent definitions, see \cite[Theorem 1.1]{KS10}. In particular, it does not depend on the choice of the compactification $\overline{X}$.
\end{rem}

\noindent \textbf{Case (2)}: $\nu(X)=1$ and the tame fundamental group $ \pi_1^{\rm t}(X^{\rm sm})$ is finite.

\medskip

By our assumption, we can take a finite cover $\pi: X_1 \to X$ \'{e}tale over $X^{\rm sm}$ such that $\pi^{\rm t}_1(X_1^{\rm sm})$ is trivial. This implies that $\pi^{\rm et}_1(X_1^{\rm sm})$ is a pro-$p$-group. Then $\pi^*K_X = K_{X_1}$, and by Corollary \ref{sing-of-cover} $X_1$ also has terminal singularities, thus $\kappa(X_1) = \kappa(X)$. If $\dim \Pic^0(X_1) = 0$ then we have been done by \cite{Zhang17}. We can replace $X$ with $X_1$, and assume that $\pi^{\rm et}_1(X^{\rm sm})$ is a pro-$p$-group. Note that if $A$ is a smooth ample divisor on $X$ contained  in $X^{\rm sm}$, then $\pi^{\rm et}_1(A) \cong \pi^{\rm et}_1(X^{\rm sm})$ by \cite[X. Example 2.2, Theorem 3.10]{SGA2}.

To start, we fix a smooth ample divisor $H$ contained in $X^{\rm sm}$. Since $K_X$ is nef, by Fujita vanishing (\cite{Keeler03} or \cite[1.4.35-36]{Laz04}) we can assume $H$ is ample enough that $H^i(X, mK_X + H) = 0$ for $i >0$ and every integer $m >0$ divisible by $r_0$.
We fix an integer $n$ divisible by $r_0$ and such that
\begin{eqnarray}\label{e-foli}
n > \frac{(4r_0+2)K_X \cdot H^2+H^3}{K_X \cdot H^2}.
\end{eqnarray}

Denote by $\omega_X^{\bullet}$ the dualizing complex.  Applying Grothendieck duality, we get
$${\rm Ext}_{\O_X}^1(\O_X(nK_X), \omega_X^{\bullet}[-3]) \cong H^2(\O_X(nK_X))^* \neq 0.$$
Take a nonzero element $\alpha \in {\rm Ext}_{\O_X}^1(\O_X(nK_X), \omega_X^{\bullet}[-3])$. Then $\alpha$ corresponds to a non-split triangle
$$ \omega_X^{\bullet}[-3] \to \mathcal{E}^{\bullet} \to \O_X(nK_X) \to \omega_X^{\bullet}[-2].$$
Taking the $0^{\mathrm{th}}$ cohomology gives the extension
$$(*)~~~~~~0 \to \omega_X \to \mathcal{E} \to I_{T}\cdot\O_X(nK_X) \to 0,$$
where $T$ is a closed subscheme of $X$ supported at the union of those singularities of $X$ and $I_T$ is its ideal sheaf.

\begin{lem}\label{nonsplit}
The restriction of $(*)$ on $H$
\begin{eqnarray}\label{e-extension}
0 \to \O_H(K_X) \to \mathcal{F} = \mathcal{E}|_H \to \O_H(nK_X) \to 0
\end{eqnarray}
does not split.
\end{lem}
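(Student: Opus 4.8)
The plan is to recognize the restricted extension class of $(\ref{e-extension})$ as the pullback of the chosen $\alpha$, and to deduce its non-vanishing from $\alpha\neq 0$ via Grothendieck--Serre duality. First I would check that the restriction is clean. Since $H$ is smooth, contained in $X^{\rm sm}$, and disjoint from the finite set $T$, the sheaf $\mathcal{E}$ in $(*)$ is an extension of the line bundles $\omega_X$ and $\O_X(nK_X)$ in a neighbourhood of $H$, hence locally free of rank two there; so $i^\ast=Li^\ast$ is exact on $(*)$, and using $I_T|_H=\O_H$ together with adjunction $\omega_H=\O_H(K_X+H)$ (so $\omega_X|_H=\O_H(K_X)$) the restricted sequence $(\ref{e-extension})$ is genuinely $i^\ast(*)$. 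Its class is therefore
$$
e_H=Li^\ast\alpha\in {\rm Ext}^1_H(\O_H(nK_X),\O_H(K_X))=H^1(H,\O_H((1-n)K_X)),
$$
the image of $\alpha$ under the natural pullback on Ext. Here I use that $Li^\ast(\omega_X^\bullet[-3])\cong\O_H(K_X)$ near $H$: this follows from the divisorial adjunction $i^!\omega_X^\bullet\cong Li^\ast\omega_X^\bullet\otimes\O_H(H)[-1]$ (valid because $H$ is Cartier along its support) and the fact that the non-Cohen--Macaulay part $\mathcal{H}^{-2}(\omega_X^\bullet)$ is supported on the singular locus $T$, disjoint from $H$, so it contributes nothing after restriction.

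Next I would compute both Ext groups by duality. Grothendieck--Serre duality on $X$ gives ${\rm Ext}^1_X(\O_X(nK_X),\omega_X^\bullet[-3])\cong H^2(X,\O_X(nK_X))^\ast$ (this is exactly how $\alpha$ was produced), while Serre duality on the smooth projective surface $H$ gives
$$
{\rm Ext}^1_H(\O_H(nK_X),\O_H(K_X))=H^1(H,\O_H((1-n)K_X))\cong H^1(H,\O_H(nK_X+H))^\ast .
$$
Under these identifications the pullback $Li^\ast$ is dual to a map $\theta\colon H^1(H,\O_H(nK_X+H))\to H^2(X,\O_X(nK_X))$, and the functoriality of Grothendieck duality with respect to the Gysin map of the Cartier divisor $H$ identifies $\theta$ with the connecting homomorphism $\delta$ in the long exact cohomology sequence of
$$
0\to\O_X(nK_X)\to\O_X(nK_X+H)\to\O_H(nK_X+H)\to 0 .
$$

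Finally I would invoke the Fujita vanishing fixed in the setup: as $n$ is a positive multiple of $r_0$, one has $H^1(X,nK_X+H)=H^2(X,nK_X+H)=0$, so the long exact sequence forces $\delta$ to be an isomorphism. Hence its dual $Li^\ast$ is an isomorphism, and since $\alpha\neq 0$ we get $e_H=Li^\ast\alpha\neq 0$; this is precisely the non-splitting of $(\ref{e-extension})$. I expect the duality bookkeeping to be the main obstacle, namely verifying that, under the two dualities above, the Ext-pullback $Li^\ast$ is dual to the cohomology connecting map $\delta$. This is a compatibility of Grothendieck duality with restriction to the divisor $H$ — encoded in the adjunction $i^!\omega_X^\bullet\cong Li^\ast\omega_X^\bullet\otimes\O_H(H)[-1]$ and the attendant trace/residue maps — and it is the reason one must work with the dualizing complex $\omega_X^\bullet$ rather than $\omega_X$, since $X$ is not known to be Cohen--Macaulay.
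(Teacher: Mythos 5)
Your proposal is correct, and at bottom it is the Grothendieck-dual presentation of the paper's own argument: both rest on tensoring $0 \to \O_X(-H) \to \O_X \to \O_H \to 0$ into the picture and on the Fujita vanishing $H^1(X, \O_X(nK_X+H)) = H^2(X, \O_X(nK_X+H)) = 0$ fixed in the setup. The difference is organizational. You dualize both Ext groups and must then verify that the Ext-pullback $Li^{*}$ is dual to the connecting homomorphism $\delta$ of $0 \to \O_X(nK_X) \to \O_X(nK_X+H) \to \O_H(nK_X+H) \to 0$ --- the duality bookkeeping you yourself flag as the main obstacle. The paper instead applies $R{\rm Hom}_X(\O_X(nK_X), -)$ directly to the triangle $\omega_X^{\bullet}[-3](-H) \to \omega_X^{\bullet}[-3] \to \O_H(K_X) \to \omega_X^{\bullet}[-2](-H)$, so the restriction map $r_H$ sits in a four-term exact sequence whose outer terms are ${\rm Ext}^i_{\O_X}(\O_X(nK_X), \omega_X^{\bullet}[-3](-H)) \cong H^{3-i}(X, \O_X(nK_X+H))^{*} = 0$ for $i=1,2$; duality is used only to compute these two vanishing terms, never to identify a map with the dual of another, so the compatibility of duality with the Gysin/trace map never has to be checked. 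Your route buys a concrete description of the restriction map as $\delta^{*}$ (and the observation that it is an isomorphism, which the paper also obtains); the paper's route buys a shorter verification in which the only nontrivial input is the vanishing itself. Your preliminary reductions (that $H$ misses $T$ so the restriction of $(*)$ is honest, and that $\omega_X^{\bullet}[-3]$ restricts to $\O_H(K_X)$ because $H$ lies in the smooth locus) are correct and are implicitly used in the paper as well.
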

\begin{proof} Since
$${\rm Ext}_{\O_X}^1(\O_X(nK_X), \O_H(K_X)) \cong  {\rm Ext}_{\O_H}^1(\O_H(nK_X), \O_H(K_X)),$$ it suffices to prove that the restriction of $\alpha$ on $H$
$$\alpha|_H \in \mathrm{Im}({\rm Ext}_{\O_X}^1(\O_X(nK_X), \omega_X^{\bullet}[-3]) \xrightarrow{r_H} {\rm Ext}_{\O_X}^1(\O_X(nK_X), \O_H(K_X)))$$
is nonzero. Applying $R{\rm Hom}_X(\O_X(nK_X),-)$ to the following triangle
$$\omega_X^{\bullet}[-3](-H) \to\omega_X^{\bullet}[-3] \to \O_H(K_X) \to \omega_X^{\bullet}[-2](-H)$$
gives the exact sequence
{\small\begin{align*}
{\rm Ext}_{\O_X}^1(\O_X(nK_X), &\omega_X^{\bullet}[-3](-H)) \to  {\rm Ext}_{\O_X}^1(\O_X(nK_X), \omega_X^{\bullet}[-3]) \\
&\xrightarrow{r_H}  {\rm Ext}_{\O_X}^1(\O_X(nK_X), \O_H(K_X)) \to {\rm Ext}_{\O_X}^2(\O_X(nK_X), \omega_X^{\bullet}[-3](-H)).
\end{align*}}
Applying Grothendieck duality, by the construction of $H$ we have that for $i=1,2$
$${\rm Ext}_{\O_X}^i(\O_X(nK_X), \omega_X^{\bullet}[-3](-H)) \cong H^{3-i}(\O_X(nK_X + H))^* = 0.$$
In turn we conclude the restriction map $r_H$ is an isomorphism.
\end{proof}
\bigskip

We need the following weak version of Theorem \ref{nfvb} for $X$ which is not necessarily smooth.

\begin{lem}\label{lem-up-to-cover}
There exists a finite cover $\eta: Z \to X$ \'{e}tale over $X^{\rm sm}$ such that $\eta^*\mathcal{E}$ is not strongly $\eta^*H$-stable.
\end{lem}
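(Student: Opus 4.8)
\textbf{Proof proposal for Lemma \ref{lem-up-to-cover}.}
The plan is to transport the situation to the smooth ample divisor $H$, where Theorem \ref{nfvb} is available, and then use the Lefschetz-type comparison of Picard groups together with the non-split extension from Lemma \ref{nonsplit} to produce the destabilizing data. First I would argue that it suffices to work with the restriction $\mathcal{F} = \mathcal{E}|_H$: since $H\subset X^{\rm sm}$ is ample and $\pi_1^{\rm et}(H)\cong \pi_1^{\rm et}(X^{\rm sm})$ is a pro-$p$-group, any étale cover of $H$ that I construct extends (after possibly enlarging) to a finite cover of $X$ that is étale over $X^{\rm sm}$; strong stability of $\eta^*\mathcal{E}$ would restrict to strong stability of the corresponding sheaf on the preimage of $H$, so it is enough to destabilize on $H$. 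Concretely, I want to show that $\mathcal{F}$ (or a Frobenius pullback of it on a suitable cover) fails to be strongly stable.

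The heart of the argument is to compare $\mathcal{F}$ with a numerically flat bundle. The extension (\ref{e-extension}) has the shape $0\to \O_H(K_X)\to \mathcal{F}\to \O_H(nK_X)\to 0$, and after twisting by a suitable multiple of $K_X$ (using $\nu(K_X)=1$, so that $K_X|_H$ is a nef, non-big divisor with $c_1(K_X|_H)^2=0$) the two outer terms become numerically trivial line bundles in the appropriate sense, making the twisted $\mathcal{F}$ a candidate for a numerically flat bundle. I would then invoke Theorem \ref{nfvb}: on some étale cover $\pi\colon Y\to H$ and after a Frobenius iteration $F^{e}$, the relevant numerically flat bundle becomes trivial, $F_Y^{e*}\pi^*(\text{twist of }\mathcal{F})\cong \O_Y^2$. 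A direct-sum decomposition of a Frobenius pullback of $\mathcal{F}$ into line subbundles exhibits a destabilizing subsheaf, contradicting strong stability; pulling this cover back to $X$ via the Lefschetz isomorphism $\Pic(X)\hookrightarrow\Pic(H)$ (Theorem \ref{thm-hl}, whose hypothesis holds because $H^1(H,\O_H(-nH|_H))=0$ by the ampleness of $H|_H$ and our vanishing setup) gives the claimed cover $\eta\colon Z\to X$.

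The step I expect to be the main obstacle is reconciling the non-splitting of (\ref{e-extension}) from Lemma \ref{nonsplit} with the trivialization coming from Theorem \ref{nfvb}. A priori the extension class may die after Frobenius pullback even though the sheaf $\mathcal{F}$ itself is non-trivial, so the naive conclusion ``$F^{e*}\mathcal{F}$ splits'' is both what I want (it destabilizes) and a tension with non-splitness. The resolution is that non-splitness of the \emph{original} extension is what guarantees $\mathcal{F}$ is not itself a sum of line bundles with distinct slopes, i.e.\ it is properly semistable rather than stable, and this is precisely the failure of strong stability that I am after; I would make this quantitative by checking $\mu(\O_H(K_X)) = \mu(\mathcal{F})$ in the chosen polarization, so that $\O_H(K_X)\hookrightarrow\mathcal{F}$ is already a subsheaf of equal slope and $\mathcal{F}$ cannot be stable. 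If the slopes do not match on the nose, I would instead run the Harder-Narasimhan/Jordan-Hölder machinery of Theorem \ref{nfvb} on a Frobenius pullback to extract a strongly stable graded piece that must be $\O$ by Lemma \ref{l2}, and read off the destabilization from the resulting triviality. Either route reduces the lemma to the structural results already established, and the bound (\ref{e-foli}) on $n$ is what ensures the relevant positivity/vanishing needed to run these comparisons.
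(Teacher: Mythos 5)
Your overall strategy --- restrict to an ample surface inside $X^{\rm sm}$, exploit that its \'etale fundamental group is pro-$p$ to apply Theorem \ref{nfvb} to a numerically flat bundle, and then extend the resulting cover back to $X$ --- matches the paper's in outline, but several of your steps have genuine problems. First, you apply Theorem \ref{nfvb} to a twist of $\mathcal{F}=\mathcal{E}|_H$ itself. Since $c_1(\mathcal{F})=(n+1)K_X|_H$ and $K_X\cdot H^2>0$, killing $c_1$ requires twisting by a line bundle whose double is $-(n+1)K_X|_H$, and such a square root need not exist in $\Pic$. The paper avoids this by applying Theorem \ref{nfvb} to $\mathcal{E}nd(\mathcal{F})\cong\mathcal{F}\otimes\mathcal{F}^*$, which has $c_1=0$ automatically and $c_2=4c_2(\mathcal{F})-c_1(\mathcal{F})^2=0$ because $K_X^2\cdot H=0$; trivializing this endomorphism bundle after an \'etale cover and a Frobenius pullback yields four independent endomorphisms, and a stable sheaf is simple. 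Second, your fallback computation is numerically wrong: with respect to the induced polarization one has $\mu(\O_H(K_X))=kK_X\cdot H^2$ while $\mu(\mathcal{F})=\tfrac{n+1}{2}\,kK_X\cdot H^2$, and these differ since $n\ge 2$ and $K_X\cdot H^2>0$ (this is $\nu(K_X)=1$, not $0$); so $\O_H(K_X)\hookrightarrow\mathcal{F}$ has strictly smaller slope and destabilizes nothing. Had the slopes matched, the lemma would be trivial and no cover or Frobenius would be needed. Relatedly, Lemma \ref{nonsplit} plays no role in the proof of Lemma \ref{lem-up-to-cover}; the non-splitting is used later in the main argument, so the ``tension'' you identify is not where the difficulty lies.

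Third, and most seriously, you treat the passage between $X$ and the surface as free. The assertion that strong stability of $\eta^*\mathcal{E}$ ``restricts'' to the preimage of $H$ is exactly the hard direction of the restriction theorem, and it is needed with a bound uniform in the Frobenius exponent $e$; this is supplied by \cite[Theorem 5.2]{Lan04} applied to a general member $A\in|kH|$ with $k\gg0$, using precisely the vanishing $\Delta(F_X^{e*}\mathcal{E})\cdot H=0$. Without it the reduction to the surface does not start. Likewise, the return trip is not via $\Pic(X)\hookrightarrow\Pic(H)$ (that Lefschetz statement is used elsewhere in the paper): the cover extends because $\pi_1^{\rm et}(A)\cong\pi_1^{\rm et}(X^{\rm sm})$, and the instability must then be transported from the surface up to $Z$, which again requires either the restriction theorem on $Z$ or, as the paper does, lifting the four endomorphisms from $A'$ to $Z$ through the vanishings of Corollary \ref{vanishing-of-h1} and Lemma \ref{vanishing-of-refl} applied to infinitesimal neighbourhoods of $A'$. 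None of these omissions is cosmetic; each is where the hypotheses ($K_X^2\cdot H=0$, $H$ sufficiently ample, the pro-$p$ condition) actually enter the argument.
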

\begin{proof}
We may assume that $\mathcal{E}$ is strongly $H$-stable.
Since $K_X^2 \cdot H = 0$, we have
$$c_1(\mathcal{E})^2\cdot H = c_2(\mathcal{E})\cdot H = \Delta(F_X^{e*}\mathcal{E}) \cdot H = 0.$$
Take a smooth divisor $A \in |kH|$ contained in $X^{\rm sm}$. Applying \cite[Theorem 5.2]{Lan04}, we can fix a sufficiently large $k$ (independent of $e$) such that $F_X^{e*}\mathcal{E}|_A$ is stable, hence
the restriction $\mathcal{F} = \mathcal{E}|_A$ is strongly $H$-stable. We note that \cite[Theorem 5.2]{Lan04} holds in the smooth setting, but in our case $X$ has isolated singularities, so we can verify the assertion on a smooth resolution, as the reflexive hull of the pull back of  $\mathcal{E}$ is strongly stable with respect to the pull back of $H$.

Let $\mathcal{G} = \mathcal{E}nd(\mathcal{F}) \cong \mathcal{F} \otimes \mathcal{F}^*$. Then $\mathcal{G}$ is strongly semistable by Lemma \ref{tensor}. We have
$$c_1(\mathcal{G})=0 ~\mathrm{and} ~c_2(\mathcal{G}) = 4c_2(\mathcal{F}) - c_1(\mathcal{F})^2 = 0.$$
The sheaf $\mathcal{G}$ is numerically flat by \cite[Theorem 2.2]{Lan12}. Since $\pi^{\rm et}_1(A)$ is a pro-$p$-group, applying Theorem \ref{nfvb}, we can find an \'{e}tale cover $\eta_A: A' \to A$ and $e>0$ such that
$$F_{A'}^{e*} \eta_A^*\mathcal{G} \cong \eta_A^*F_A^{e*} \mathcal{G} \cong \oplus^4\O_{A'}.$$
It follows that
$$\dim {\rm Hom}_{A'}(F_{A'}^{e*} \eta_A^*\mathcal{F}, F_{A'}^{e*} \eta_A^*\mathcal{F}) = h^0(\mathcal{E}nd(F_{A'}^{e*} \eta_A^*\mathcal{F})) = 4.$$

By $\pi^{\rm et}_1(A) \cong \pi^{\rm et}_1(X^{\rm sm})$, the cover $\eta_A \colon A' \to A$ extends to a finite cover $\eta: Z \to X$ \'{e}tale over $X^{\rm sm}$,
where $Z$ is assumed to be normal. We will show that $\eta^*\mathcal{E}$ is not strongly $\eta^*H$-stable.
Since stable sheaves do not admit nontrivial endomorphisms, it suffices to prove that the sheaf $\tilde{\mathcal{G}}^e =_{\rm defn} \mathcal{E}nd_Z(F_Z^{e*}\eta^*\mathcal{E})$ satisfies
$h^0(Z, \tilde{\mathcal{G}}^e )>1.$

\medskip

We fix an integer $m$ such that $m(p-2)H -K_X$ is ample, and at the beginning we can choose $k > m + 4$, then
$$(p-1)A|_A - K_A = (p-1)A|_A - (K_X +A)|_A = ((p-2)kH -K_X)|_A$$
satisfies the condition in Corollary \ref{vanishing-of-h1}. Obviously this property holds for $A'$ on $Z$. Applying Corollary \ref{vanishing-of-h1} gives
$$H^1(A', \O_{A'}(-lA')) = 0~\mathrm{for\  every}~l>0.$$
Then by $\tilde{\mathcal{G}}^e|_{A'} (-lA') \cong \oplus^4\O_{A'}(-lA')$, it follows immediately that
$$H^1(A', \tilde{\mathcal{G}}^e|_{A'} (-lA')) = 0~\mathrm{for \ every}~l>0.$$
Consider the long exact sequence induced by taking cohomology of the short exact sequence
$$0 \to \tilde{\mathcal{G}}^e|_{A'} (-lA') \to \tilde{\mathcal{G}}^e|_{(l+1)A'}  \to \tilde{\mathcal{G}}^e|_{lA'}\to 0.$$
We can show the restriction map
$$H^0(A', \tilde{\mathcal{G}}^e|_{(l+1)A'}) \to H^0(A', \tilde{\mathcal{G}}^e|_{lA'})$$ is surjective.
By induction on $l$, we conclude $h^0(\tilde{\mathcal{G}}^e|_{lA'}) \geq 4$ for every $l$.

Consider the exact sequence
$$0 \to \tilde{\mathcal{G}}^e(-lA') \to \tilde{\mathcal{G}}^e \to \tilde{\mathcal{G}}^e|_{lA'} \to 0.$$
Then for a sufficiently large $l$, Lemma \ref{vanishing-of-refl} tells that $H^1(Z, \tilde{\mathcal{G}}^e(-lA')) = 0$.
In turn we conclude that $\dim H^0(Z, \tilde{\mathcal{G}}^e) \ge 4$.
\end{proof}

\bigskip

Denote the pull back $H'=\eta^{-1}(H)$ and $\eta_H\colon H'\to H$. We claim that the pull-back of the extension (\ref{e-extension}) via $\eta_H$
\begin{eqnarray}\label{e-extension-pullback}
0 \to \O_{H'}(\eta^*K_X) \to \eta_H^*\mathcal{F} = \eta^*\mathcal{E}|_{H'} \to \O_{H'}(\eta^*nK_X) \to 0
\end{eqnarray}
does not split. For this we only need to show that the pull-back homomorphism
$$\gamma: H^1(H, \O_H((1-n)K_X)) \to H^1(H', \eta_H^*\O_H((1-n)K_X))$$
is injective. Up to the isomorphism below
$$H^1(H', \eta_H^*\O_H((1-n)K_X)) \cong H^1(H, \eta_{H*}\O_{H'} \otimes \O_H((1-n)K_X))$$
the map $\gamma$ fits into the following  exact sequence
$$H^0(H, \mathcal{C} \otimes \O_H((1-n)K_X)) \to  H^1(H, \O_H((1-n)K_X)) \to H^1(H, \eta_{H*}\O_{H'} \otimes \O_H((1-n)K_X))$$
which is induced by tensoring $\mathcal{O}((1-n)K_X)$ with the natural exact sequence
$$0 \to \O_H \to \eta_{H*}\O_{H'} \to \mathcal{C} \to 0$$
and taking the cohomology.
To show the injectivity of $\gamma$, we only need to prove the vanishing
$$H^0(H, \mathcal{C} \otimes \O_H((1-n)K_X)) = 0.$$
Denote by $F_H^e: H_e \to H$ and $F_{H'}^e: H'_e \to H'$ the $e^{\rm th}$ absolute Frobenius iterations. Since $\eta_{H}: H' \to H$ is \'{e}tale, we have the following commutative diagram
$$\xymatrix@C=1.5cm{&H'\cong H'_e\ar[dr]^{\eta_H}\ar@/^2pc/[rr]|{F_{H'}^{e}}\ar[r]^{\cong} &H'\times_H H_e\ar[d]\ar[r] &H'\ar[d]^{\eta_H}\\
& &H\cong H_e \ar[r]^{F_H^e} &H
}$$
Then $F_H^{e*} (\eta_{H*}\O_{H'}) \cong  \eta_{H*}(F_{H'}^{e*}\O_{H'}) \cong \eta_{H*}\O_{H'}$ by \cite[III.9.3]{Har77}, and thus $F_H^{e*}\mathcal{C} \cong \mathcal{C}$.
Since $\mathcal{C}$ is torsion free and $\nu(H, K_X|_H) = 1$, if $e\gg 0$ then
$$H^0(H, F_{H}^{e*}(\mathcal{C} \otimes \O_H((1-n)K_X))) \cong H^0(H, \mathcal{C} \otimes \O_H(p^e(1-n)K_X)) = 0.$$
We can conclude $H^0(H, \mathcal{C} \otimes \O_H((1-n)K_X)) = 0$ since the map below is injective
$$F_H^{e*}: H^0(H, \mathcal{C} \otimes \O_H((1-n)K_X)) \to H^0(H, F_{H}^{e*}(\mathcal{C} \otimes \O_H((1-n)K_X))).$$


\bigskip

Let us proceed with the proof. In the following we will replace $X, H$ with $Z, H'$ introduced in the lemma above, and replace the extension $(*)$ with its pull-back via $\eta$. We note that the inequality \eqref{e-foli} still holds on $Z$ and $H'$.

By Lemma \ref{lem-up-to-cover}, we may assume that $F_X^{e*}\mathcal{E}$ is not $H$-stable for some $e>0$. There exists an exact sequence
$$0 \to \mathcal{A} \to F_X^{e*}\mathcal{E} \to \mathcal{B} \to 0$$
$\mu$-destabilizing $F_X^{e*}\mathcal{E}$. We may write that
$$\mathcal{A}^{**} = \O_X(A)~ \mathrm{and}~ \mathcal{B}^{**} = \O_X(B)$$
where $A, B$ are Weil divisors on $X$.
Then we get nonzero homomorphisms
$$\eta_1: \O_X(A) \to \O_X(p^enK_X)~\mathrm{and}~\eta_2: \O_X(p^eK_X) \to \O_X(B).$$
In particular both $p^enK_X - A$ and $B-p^eK_X$ are linearly equivalent to some effective divisors.

\begin{lem}\label{equiv-restr}
Let $S$ be a smooth ample divisor on $X$ contained in $X^{\rm sm}$. Then we have  $\Q \cdot A|_S = \Q \cdot K_X|_S$ in ${\rm NS}(S)\otimes \Q$.
\end{lem}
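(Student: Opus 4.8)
The plan is to work entirely on the surface $S$ and reduce the statement, via the Hodge index theorem, to a single intersection computation. Write $m=K_X|_S$ and $a=A|_S$. Since $\nu(X,K_X)=1$ we have $K_X^2\cdot S=0$, so $m$ is a nonzero nef class on $S$ with $m^2=0$; by the Hodge index theorem the intersection form on $m^{\perp}\subset N^1(S)_{\mathbb R}$ is negative semidefinite with radical $\mathbb R\cdot m$. Hence it suffices to prove the two numerical statements $a\cdot m=0$ and $a^2\ge 0$: the first places $a$ in $m^{\perp}$, forcing $a^2\le 0$, and together with the second this gives $a^2=0$, whence $a\in\mathbb R\cdot m=\mathbb Q\cdot m$, which is exactly $\mathbb Q\cdot A|_S=\mathbb Q\cdot K_X|_S$.

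Several of the needed inequalities come for free from the destabilizing data. The map $\eta_1$ exhibits $D:=p^enK_X-A$ as (linearly equivalent to) an effective divisor, so $a=p^enm-d$ with $d=D|_S\ge 0$; intersecting with the nef class $m$ and using $m^2=0$ gives $a\cdot m=-d\cdot m\le 0$. For the self-intersection I will use that $F_X^{e*}\mathcal E$ has vanishing discriminant against $S$: from $\Delta(\mathcal E)=4c_2(\mathcal E)-c_1(\mathcal E)^2$ together with $c_1(\mathcal E)=(n+1)K_X$ and $c_2(\mathcal E)=nK_X^2$ one gets $\Delta(F_X^{e*}\mathcal E)\cdot S=0$; applying the rank-two formula $\Delta=-(c_1(\mathcal A|_S)-c_1(\mathcal B|_S))^2+4\ell$ with $\ell\ge 0$ to the restricted sequence then yields, once $a\cdot m=0$ is known, $a^2=d^2=\ell\ge 0$, as desired. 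Finally, the passage from a single ample divisor to the arbitrary $S$ in the statement is cheap: $D\cdot K_X$ is a pseudoeffective $1$-cycle (an effective divisor met with a nef divisor), so if $D\cdot K_X\cdot H=0$ for one ample $H$ then $D\cdot K_X\equiv 0$ in $N_1(X)$ and therefore $A\cdot K_X\cdot S=0$ for every ample $S$ contained in $X^{\rm sm}$.

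Thus everything is reduced to the single equality $A\cdot K_X\cdot H=0$, and this is the step I expect to be the crux. One inequality, $A\cdot K_X\cdot H\le 0$, is immediate from $A=p^enK_X-D$ with $D$ effective. For the reverse inequality the key structural input is that $F_X^{e*}\mathcal E$ is $\mu$-semistable with respect to the nef pair $(K_X,H)$: every rank-one subsheaf of $F_X^{e*}\mathcal E$ maps nontrivially either to the sub-line bundle $\mathcal O_X(p^eK_X)$ or to the quotient $\mathcal O_X(p^enK_X)$ of the Frobenius pull-back of $(*)$, and both of these have $(K_X,H)$-slope $0=\mu_{(K_X,H)}(F_X^{e*}\mathcal E)$ because $K_X^2\cdot H=0$. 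I will then realize $\mathcal A$ as a graded piece of the Harder--Narasimhan filtration of $F_X^{e*}\mathcal E$ taken with respect to the perturbed polarization $(K_X+\epsilon H,H)$; by the slope principle recalled in Section \ref{pf-nc2}, the $(K_X,H)$-semistability forces every such graded piece to have limiting $(K_X,H)$-slope equal to $\mu_{(K_X,H)}(F_X^{e*}\mathcal E)=0$, giving $A\cdot K_X\cdot H=0$. The delicate point, where the numerical-dimension-one hypothesis is genuinely used, is to guarantee that this perturbed filtration is nontrivial and that its slope-zero piece is the one producing the nonzero maps $\eta_1,\eta_2$; here I expect to need both the pseudoeffectivity of $K_X^2$ (giving $K_X^2\equiv 0$ as a $1$-cycle, hence $A\cdot K_X^2=0$) and the lower bound on $n$ in \eqref{e-foli}, which together pin the degeneracy divisor $D$ of $\eta_1$ to be numerically $K_X$-trivial.
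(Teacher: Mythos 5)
Your overall architecture is sound and in fact runs parallel to the paper's proof: both arguments work on $S$, use $c_2(F_X^{e*}\mathcal{E}|_S)=0$ together with the effectivity of $p^enK_X-A$ and $B-p^eK_X$, and finish with the Hodge index theorem; your globalization from one ample $H$ to an arbitrary $S$ via pseudoeffectivity of $D\cdot K_X$ is also fine, as is the computation $a^2=\ell\ge 0$ once $a\cdot K_X|_S=0$ is known. The problem is precisely the step you flag as the crux, namely $A\cdot K_X\cdot H\ge 0$, which you do not actually prove. The $(K_X,H)$-semistability of $F_X^{e*}\mathcal{E}$ only reproves what you already have: it says rank-one subsheaves have slope $\le 0$ and rank-one quotients have slope $\ge 0$, i.e. $A\cdot K_X\cdot H\le 0\le B\cdot K_X\cdot H$, which together with $A+B=p^e(n+1)K_X$ is consistent with $A\cdot K_X\cdot H$ being any nonpositive number. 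The proposed fix --- realizing $\mathcal{A}$ as a graded piece of the Harder--Narasimhan filtration for the perturbed polarization $(K_X+\epsilon H,H)$ --- is unjustified: $\mathcal{A}$ is by construction the destabilizing subsheaf for the polarization $H$, there is no reason it should occur in the HN filtration for a different polarization, and $F_X^{e*}\mathcal{E}$ may well be stable for the perturbed one, in which case that filtration is trivial and yields nothing.

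The paper closes exactly this gap by a different mechanism, using both $\eta_1$ and $\eta_2$ simultaneously. Writing $a=A|_S$, $b=B|_S$, $k=K_X|_S$, $s=S|_S$, one has $(a-b)^2=(a+b)^2-4a\cdot b=-4a\cdot b\ge 0$ from $c_2=0$; if $(a-b)^2>0$, then by $s\cdot(a-b)\ge 0$ and \cite[V.1.8]{Har77} the class $a-b$ lies in the positive cone, so $h^0(\O_S(m(a-b)))$ grows like $m^2$, and since $p^e(n-1)k=(a-b)+(b-p^ek)+(np^ek-a)$ exhibits $mp^e(n-1)k$ as $m(a-b)$ plus effective classes, $h^0(\O_S(mp^e(n-1)k))$ would also grow quadratically, contradicting $\nu(S,k)=1$. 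Hence $(a-b)^2=0$, so $a-b$ lies in the closure of the positive cone of $S$, and since $k$ does too, $k\cdot(a-b)\ge 0$. Combined with $k\cdot(a-b)\le 0$ (from the effectivity of $p^e(n-1)k-(a-b)$ and $k^2=0$) and $k\cdot(a+b)=0$, this gives $k\cdot a=0$, which is exactly the identity your proof is missing. To complete your write-up you should replace the perturbed-HN step by this section-counting and positive-cone argument; as it stands the proof is incomplete at its central step.
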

\begin{proof}This follows from the same argument as in \cite[p.113-114]{Kol92}. For readers' convenience, we give all the details. Put
$$A|_S = a, B|_S = b, S|_S = s~\mathrm{and}~K_X|_S = k.$$
Then
$$a+b = p^e(n+1)k,~s\cdot (a-b) \geq 0~\mathrm{and}~a\cdot b \leq c_2(F_X^{e*}\mathcal{E}|_S) = 0.$$

We claim that $(a- b)^2 = 0$. Otherwise,
$$(a-b)^2 = (a+b)^2 - 4a\cdot b = - 4a\cdot b>0,$$
applying \cite[ V.1.8]{Har77} shows that $a-b \in C^{+}(S)$ (i.e., the positive cone of $S$), so that $h^0(\O_S(m(a-b))) = O(m^2)$ for $m\gg 0$. Since $b-p^ek$ and $np^ek-a$ are both effective and
$$mp^e(n-1)k = m(a-b) + m(b-p^ek) + m(np^ek-a),$$
it follows that
$$h^0(\O_S(mp^e(n-1)k)) \geq h^0(\O_S(m(a-b))) = O(m^2)~\mathrm{for}~m\gg 0$$
which contradicts that $\nu(S, k) = 1$.

Since $\O_S(a) \hookrightarrow \O_S(p^enk)$ and $\O_S(p^ek) \hookrightarrow \O_S(b)$, we get $\O_S(a-b) \hookrightarrow \O_S(p^e(n-1)k)$. Since $k^2=0$ and $k$ is nef, we find that $k\cdot(a-b) = 0$. Since $k$ and $(a-b)$ both lie in the closure of $C^{+}(S)$, the Hodge index theorem gives $a-b \in \Q\cdot  k$ in $NS(S)\otimes \Q$, thus $a,b\in \Q\cdot  k$.
\end{proof}

If $\dim{\rm Pic}^0(S)>0$, we can always find an $l(\neq p)$-torsion in ${\rm Pic}(S)$, which will induces a $\mathbb{Z}/(l)$ quotient of $\pi_1^{\rm et}(S)$. Thus it follows from that $\pi_1^{\rm et}(S)$ is a pro-$p$-group, ${\rm Pic}^0(S)$ is of dimension zero. We can assume $S$ is sufficiently ample to satisfy Lefschetz Hyperplane Theorem \ref{thm-hl} $\Pic(X) \hookrightarrow \Pic(S)$, indeed by
$$(p-1)nS - K_S = (p-1)nS - (K_X + S)|_S = (((p-1)n -1)S - K_X)|_S,$$
for $n>0$ if $S$ is sufficiently ample then $H^1(S, -nS|_S) = 0$ by Corollary \ref{vanishing-of-h1}.
From Lemma \ref{equiv-restr} we conclude that there exist positive integers $n_1,n_2$ such that $n_1A \sim n_2K_X$.
The inclusion $\eta_1: \O_X(A) \hookrightarrow \O_X(p^enK_X)$ induces another inclusion
$$\O_X(n_1A) \cong \O_X(n_2K_X) \hookrightarrow \O_X(n_1p^enK_X).$$
We obtain that $H^0(X, (n_1p^en - n_2)K_X) \neq 0$.

\bigskip

If the pull-back of the extension \eqref{e-extension}
\begin{eqnarray}\label{e-extension2}
0 \to F_H^{e*}\O_H(K_X) \to F_H^{e*}\mathcal{E} \to F_H^{e*}\O_H(nK_X) \to 0
\end{eqnarray}
does not split, then $\O_H(A|_H) \hookrightarrow \O_H(p^enK_X|_H)$ is not an isomorphism, thus $n_1p^en - n_2 \neq 0$. The theorem follows easily.

\medskip

The above argument applies for every smooth divisor of $|H|$ contained in $X^{\rm sm}$. The remaining case follows from the lemma below.

\begin{lem}
We assume  for some $$n > \frac{(4r_0+2)K_X \cdot H^2+H^3}{K_X \cdot H^2}$$
and general smooth divisors in $ |H|$,
the extension \eqref{e-extension2} splits. Then $K_X$ is semiample.
\end{lem}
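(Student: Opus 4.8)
The plan is to exploit the Ekedahl torsor construction (Theorem \ref{torsor}) to convert the hypothesis---a non-split extension whose Frobenius pullback splits---into a covering family of $K_X$-trivial rational curves, and then to pin down the nef dimension of $K_X$. Throughout I may assume $k$ uncountable, so that the nef reduction exists.

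First I would record the extension class. The sequence \eqref{e-extension} corresponds to a class
$$\xi \in {\rm Ext}^1_{\O_H}(\O_H(nK_X), \O_H(K_X)) \cong H^1(H, \O_H((1-n)K_X)),$$
which is nonzero by Lemma \ref{nonsplit}. The splitting of \eqref{e-extension2} means precisely that $(F_H^e)^*\xi = 0$ under the ($p$-linear) Frobenius $H^1(H, \O_H((1-n)K_X)) \to H^1(H, \O_H(p^e(1-n)K_X))$. Choosing the minimal $e'\ge 1$ with $(F_H^{e'})^*\xi = 0$ and setting $\xi' = (F_H^{e'-1})^*\xi$, I obtain a nonzero element
$$\xi' \in \ker\big(F_H^*\colon H^1(H, L) \to H^1(H, pL)\big), \quad L = \O_H(p^{e'-1}(1-n)K_X).$$

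Next I would feed $\xi'$ into the mechanism of Corollary \ref{ndvh1} on the smooth projective surface $H$, taking the nef Cartier divisor $M = K_X|_H$ (Cartier since $H\subset X^{\rm sm}$) and the ample divisor $H|_H$. Using $K_H = (K_X+H)|_H$, the required inequality $-(K_H + (p-1)L)\cdot H|_H > 4\,M\cdot H|_H$ unwinds to
$$(p-1)p^{e'-1}(n-1)\,K_X\cdot H^2 > 5\,K_X\cdot H^2 + H^3,$$
which follows at once from the standing bound \eqref{e-foli} together with $p\ge 7$ and $e'\ge 1$ (so $(p-1)p^{e'-1}\ge 6$). Theorem \ref{torsor} applied to $\xi'$ then yields a degree-$p$ purely inseparable cover $\pi'\colon H'\to H$ with $K_{H'}\sim \pi'^*(K_H+(p-1)L)$, and bend-and-break exactly as in the proof of Corollary \ref{ndvh1} produces, through a general point of $H$, a rational curve $\Gamma$ with $(K_X|_H)\cdot\Gamma < 1$; since $K_X|_H$ is Cartier this forces $K_X\cdot\Gamma = (K_X|_H)\cdot\Gamma = 0$. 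As a general member of $|H|$ passes through any general point of $X$, letting $H$ vary these rational curves cover $X$, so I obtain a covering family of $K_X$-trivial rational curves and hence $n(X,K_X)\le 2$.

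Finally I would exclude $n(X,K_X) = 2$ and conclude. If $n(X,K_X)=2$, the nef reduction $f\colon X\dashrightarrow Z$ has a surface base and a $1$-dimensional general fibre $F$ with $K_X\cdot F = 0$; by adjunction $p_a(F)=1$, so since $\mathrm{char}~k\ge 5$ the geometric fibre is a smooth elliptic curve by \cite[Prop. 2.9]{Zhang17} (or \cite[Cor. 1.8]{PW17}). On the other hand, the $K_X$-trivial rational curves through general points must lie in fibres of $f$, forcing the general fibre to contain a rational curve---contradicting that it is a smooth elliptic curve. Therefore $n(X,K_X)=\nu(X,K_X)=1$, and exactly as in the last paragraph of the proof of Theorem \ref{nc2} one gets $\rho^*K_X\equiv f^*D$ for an effective $D>0$ on a curve by \cite[Lemma 5.2]{BW14}, hence $\rho^*K_X\sim_{\Q} f^*D'$ by \cite[Theorem 1.1]{Tan15b}; thus $\rho^*K_X$, and therefore $K_X$, is semiample. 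The main obstacle I expect is the middle step: the split-after-Frobenius class $\xi'$ only becomes usable once the Ekedahl torsor turns it into a genuine purely inseparable cover with controlled canonical class, and one must then check that the bend-and-break estimate really closes---this is where the precise hypothesis \eqref{e-foli} and $p\ge 7$ are consumed. The exclusion of $n(X,K_X)=2$ is the other delicate point, resting on the characteristic $\ge 5$ smoothness of arithmetic-genus-one curves.
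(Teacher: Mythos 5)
Your proposal is correct and follows essentially the same route as the paper: take the minimal Frobenius power $a$ killing the extension class, feed the surviving class $F_H^{(a-1)*}\alpha$ into the Ekedahl torsor plus bend-and-break mechanism of Corollary \ref{ndvh1} (the numerical hypothesis being exactly what \eqref{e-foli} guarantees), deduce $n(H,K_X|_H)\le 1$ for general $H$, hence $n(X,K_X)\le 2$, and finish as in Theorem \ref{nc2}. The only cosmetic difference is your choice $M=K_X|_H$ in place of the paper's $M=r_0K_X|_H$, which still satisfies the required inequality.
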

\begin{proof}
We have a minimal natural number $1\leq a \leq e$ such that the class
$$\alpha \in \ker(F_H^{a*}: H^1(\O_H((1 - n)K_X)) \to H^1(F_H^{a*}\O_H((1 - n)K_X))),$$
while $F_H^{(a-1)*}\alpha$ is nonzero in $H^1(F_H^{(a-1)*}\O_H((1 - n)K_X))$. Set
$$L =p^{a-1}(1 - n)K_X|_H \mbox{\ \  and \ \ }M = r_0K_X|_H,$$ then by our assumption on $n$,
we know
$$-(K_H+(p-1)L)\cdot H > 4M\cdot H.$$

Applying Corollary \ref{ndvh1} shows
$n(H, K_X|_H) \leq 1$. As this holds for general members in $|H|$, we conclude $n(X, K_X) \leq 2$, and then we can proceed to prove the claim by the same argument as in Theorem \ref{nc2}.
\end{proof}

\bigskip

\noindent   \textbf{Case (3):} $\nu(X)=1$ and $ \pi_1^{\rm t}(X^{\rm sm})$ is infinite.

\medskip

By assumption we have an infinite tower of nontrivial quasi-\'{e}tale Galois covers $$\cdots \to X_n \to X_{n-1} \to \cdots \to X_1 \to X_0 = X,$$
which are all tamely ramified over $X^{\rm sm}$.
By Theorem \ref{thm-finite-cover}, there exists $n_0$ such that for $n>n_0$, the covers $X_{n+1} \to X_n$ are all \'{e}tale. Therefore, we can assume the resolutions $Y_{n+1}\to Y_n$ are also finite \'etale for $n>n_0$. Similarly as in the previous case, we have that $X_n$ has terminal singularities and $\kappa(X) = \kappa(X_n)$, and if $\dim \Pic^0(X_n) = 0$ then we have been done by \cite{Zhang17}. So we can replace $X$ with $X_n$, and always assume $\dim \Pic^0(X) = 0$, thus $\chi(\O_Y) >0$ by the same argument as at the beginning of the proof. Up to a finite \'{e}tale cover we can assume that $\chi(\O_Y) \geq 4$.
Applying the properties of Hodge-Witt numbers summarized in Theorem \ref{hw}, we know
$$\chi(\O_{Y}) = \sum_j (-1)^jh_W^{0,j}(Y) =  \sum_j (-1)^jh_W^{j,0}(Y) \leq h_W^{0,0}(Y) +  h_W^{2,0}(Y),$$
which implies
$$h^0(\Omega_{Y}^2) \geq h_W^{2,0}(Y) \geq 3.$$ The linearly independent global sections of $\Omega_Y^2$ give a $\O_Y$-homomorphism
$\O_Y^{\oplus 3} \to \Omega_Y^2$. Take the saturation $\mathcal{E} \subset \Omega_Y^2$ of the image of this homomorphism.

\medskip

If $\rk ~\mathcal{E} = 3$, then $\Omega_Y^2$ is generically globally generated, thus
$$H^0(Y, 2K_Y) \cong H^0(Y, \det \Omega_Y^2) \neq 0.$$

If  $\rk ~\mathcal{E} = 1$ or $2$, then $\dim |\det \mathcal{E}| \geq 1$. We can write that $|\det \mathcal{E}| = |M| + F$ where $|M|$ denotes the movable part and $F$ denotes the fixed part.
Consider the sheaf
$$\mathcal{F} = \mathcal{E} \otimes \omega_Y^{-1} \subset \Omega_Y^2\otimes \omega_Y^{-1} \cong T_Y.$$
Let $H$ be an ample divisor on $X$. Then
\begin{eqnarray*}
c_1(\mathcal{F}) \cdot \rho^*K_X \cdot \rho^*H &= & c_1(\mathcal{E})\cdot \rho^*K_X \cdot \rho^*H + (\rk~\mathcal{E})(-K_Y) \cdot \rho^*K_X \cdot \rho^*H \\
&=& c_1(\mathcal{E})\cdot \rho^*K_X \cdot \rho^*H \\
& \geq & 0.
\end{eqnarray*}

If $c_1(\mathcal{E})\cdot \rho^*K_X \cdot \rho^*H >0$, then $T_Y$ is unstable with respect to $(\rho^*K_X, \rho^*H)$.
So abundance holds in this case by Remark \ref{rem-unstable-T_Y}.

If $c_1(\mathcal{E})\cdot \rho^*K_X \cdot \rho^*H = 0$ then $M\cdot \rho^*K_X \cdot \rho^*H= 0$. Assume $H$ is a smooth ample divisor contained in the smooth locus $X^{\rm sm}$.
By the Hodge index theorem we have
$$\Q\rho_*M|_H = \Q K_X|_H \mbox{ in } {\rm NS}(H)\otimes \Q,$$ and by the above argument we can assume this holds for all sufficiently ample $H$. We conclude that $\Q\rho_*M  = \Q K_X$ in ${\rm NS}(X)\otimes \Q$. There exists some $t \in \Q$ such that $\rho_*M  \equiv tK_X$, which concludes $\rho_*M \sim_{\Q} tK_X$ since $\dim \Pic^0(X) = 0$.

\bigskip

In conclusion we complete the proof of all cases.

\begin{bibdiv}
\begin{biblist}

\bib{Al94}{article}{
    AUTHOR = {Alexeev, V.}
     TITLE = {Boundedness and $K^2$ for log surfaces},
   JOURNAL = {Internat. J. Math. },
   VOLUME = {5},
      YEAR = {1994},
    NUMBER = {6},
     PAGES = {779--810},
}

\bib{BCE02}{article}{
   author={Bauer, T.},
   author={Campana, F.},
   author={Eckl, T.},
   author={Kebekus, S.},
   author={Peternell, T.},
   author={Rams, S.},
   author={Szemberg, T.},
   author={Wotzlaw, L.},
   title={A reduction map for nef line bundles},
   conference={
      title={Complex geometry},
      address={G\"ottingen},
      date={2000},
   },
   book={
      publisher={Springer, Berlin},
   },
   date={2002},
   pages={27--36},
}

\bib{BGO17}{article}{
    AUTHOR = {Bhatt, B.}
    AUTHOR={Gabber, O.},
    AUTHOR={Olsson, M.},
     TITLE = {Finiteness of \'etale fundamental groups by reduction modulo $p$},
   JOURNAL = {arXiv:1705.07303},
      YEAR = {2017},
}

\bib{Birkar16}{article}{
   author={Birkar, C.},
     TITLE = {Existence of flips and minimal models for 3-folds in char
              {$p$}},
   JOURNAL = {Ann. Sci. \'Ec. Norm. Sup\'er. (4)},
  FJOURNAL = {Annales Scientifiques de l'\'Ecole Normale Sup\'erieure. Quatri\`eme
              S\'erie},
    VOLUME = {49},
      YEAR = {2016},
    NUMBER = {1},
     PAGES = {169--212},
}

\bib{BW14}{article}{
    AUTHOR = {Birkar, C.}
    AUTHOR={Waldron, J.},
     TITLE = {Existence of {M}ori fibre spaces for 3-folds in {${\rm
              char}\,p$}},
   JOURNAL = {Adv. Math.},
  FJOURNAL = {Advances in Mathematics},
    VOLUME = {313},
      YEAR = {2017},
     PAGES = {62--101},
}

\bib{CST16}{article}{
    AUTHOR = {Carvajal-Rojas, J.}
    AUTHOR={Schwede, K.}
    AUTHOR={Tucker, K.},
     TITLE = {Fundamental groups of F-regular singularities via F-signature},
   JOURNAL = {arXiv:1606.04088, to appear in Ann. Sci. \'Ec. Norm. Sup\'er. (4)},
        YEAR = {2016},
   }

\bib{CTX15}{article}{
    AUTHOR = {Cascini, P.}
    AUTHOR={Tanaka, H.}
    AUTHOR={Xu, C.},
     TITLE = {On base point freeness in positive characteristic},
   JOURNAL = {Ann. Sci. \'Ec. Norm. Sup\'er. (4)},
  FJOURNAL = {Annales Scientifiques de l'\'Ecole Normale Sup\'erieure. Quatri\`eme
              S\'erie},
    VOLUME = {48},
      YEAR = {2015},
    NUMBER = {5},
     PAGES = {1239--1272},
}

\bib{DW16}{article}{
    AUTHOR = {Das, O.}
    AUTHOR={Waldron, J.}
     TITLE = {On the Abundance Problem for 3-folds in characteristic $p>5$},
   JOURNAL = {arXiv:1610.03403 },
        YEAR = {2016},
   }

\bib{Eke86}{book}{
    AUTHOR = {Ekedahl, T.},
     TITLE = {Diagonal complexes and {$F$}-gauge structures},
    SERIES = {Travaux en Cours.},
 PUBLISHER = {Hermann, Paris},
      YEAR = {1986},
     PAGES = {xii+122},
  }

\bib{Eke87}{incollection}{
    AUTHOR = {Ekedahl, T.},
     TITLE = {Foliations and inseparable morphisms},
 BOOKTITLE = {Algebraic geometry, {B}owdoin, 1985 ({B}runswick, {M}aine,
              1985)},
    SERIES = {Proc. Sympos. Pure Math.},
    VOLUME = {46},
     PAGES = {139--149},
 PUBLISHER = {Amer. Math. Soc., Providence, RI},
      YEAR = {1987},
 }

\bib{Eke88}{article}{
    AUTHOR = {Ekedahl, T.},
     TITLE = {Canonical models of surfaces of general type in positive
              characteristic},
   JOURNAL = {Inst. Hautes \'Etudes Sci. Publ. Math.},
  FJOURNAL = {Institut des Hautes \'Etudes Scientifiques. Publications
              Math\'ematiques},
     VOLUME = {67},
      YEAR = {1988},
     PAGES = {97--144},
  }

\bib{EM10}{article}{
    AUTHOR = {Esnault, H},
     AUTHOR = {Mehta, V.},
     TITLE = {Simply connected projective manifolds in characteristic $p > 0$ have no nontrivial
stratified bundles},
   JOURNAL = {Invent. Math.},
    VOLUME = {181}
      YEAR = {2010},
        NUMBER = {3},
     PAGES = {449-465},
  }

\bib{FGA}{book}{
    AUTHOR = {Fantechi\ et\ al., B.},
     TITLE = {Fundamental algebraic geometry},
    SERIES = {Mathematical Surveys and Monographs},
    VOLUME = {123},
 PUBLISHER = {American Mathematical Society, Providence, RI},
      YEAR = {2005},
   }
\bib{GM71}{book}{
    AUTHOR = {Grothendieck, A.},
    AUTHOR = {Murre, J.},
     TITLE = {The tame fundamental group of a formal neighbourhood of a
              divisor with normal crossings on a scheme},
    SERIES = {Lecture Notes in Mathematics, Vol. 208},
 PUBLISHER = {Springer-Verlag, Berlin-New York},
      YEAR = {1971},
     PAGES = {viii+133},
   MRCLASS = {14E20 (14B20)},
  MRNUMBER = {0316453},
MRREVIEWER = {T. Oda},
}
	
\bib{SGA1}{book}{
  author={Grothendieck, A.},
     TITLE = {Rev\^etements \'etales et groupe fondamental ({SGA} 1)},
    SERIES = {Documents Math\'ematiques (Paris) [Mathematical Documents
              (Paris)]},
    VOLUME = {3},
 PUBLISHER = {Soci\'et\'e Math\'ematique de France, Paris},
      YEAR = {2003},
     PAGES = {xviii+327},
      ISBN = {2-85629-141-4},
   MRCLASS = {14E20 (14-06 14F35)},
  MRNUMBER = {2017446},
}
		
\bib{SGA2}{book}{
    AUTHOR = {Grothendieck, A.},
     TITLE = {Cohomologie locale des faisceaux coh\'erents et th\'eor\`emes de
              {L}efschetz locaux et globaux ({SGA} 2)},
    SERIES = {Documents Math\'ematiques (Paris) [Mathematical Documents
              (Paris)]},
    VOLUME = {4},
    PUBLISHER = {Soci\'et\'e Math\'ematique de France, Paris},
      YEAR = {2005},
     PAGES = {x+208},
      ISBN = {2-85629-169-4},
   MRCLASS = {14B15 (14C20 14F20)},
  MRNUMBER = {2171939},
}

\bib{Har77}{book}{
    AUTHOR = {Hartshorne, Robin},
     TITLE = {Algebraic geometry},
      NOTE = {Graduate Texts in Mathematics, No. 52},
 PUBLISHER = {Springer-Verlag, New York-Heidelberg},
      YEAR = {1977},
     PAGES = {xvi+496},
  }

\bib{HNT17}{article}{
    AUTHOR = {Hashizume, K.}
    AUTHOR={Nakamura, Y.},
    AUTHOR={Tanaka, H.},
     TITLE = {Minimal model program for log canonical threefolds in positive characteristic},
   JOURNAL = { arXiv:1711.10706},
       YEAR = {2017},
   }

\bib{HPZ17}{article}{
    AUTHOR = {Hacon, C.}
    AUTHOR={Patakfalvi, Z.},
    AUTHOR={Zhang, L.},
     TITLE = {Birational characterization of abelian varieties and ordinary abelian varieties in characteristic $p>0$},
   JOURNAL = {arXiv:1703.06631},
       YEAR = {2017},
   }

\bib{HW17}{article}{
   AUTHOR = {Hacon, C.}
      AUTHOR = {Witaszek, J.},
     TITLE = {On the rationality of kawamata log terminal singular-
ities in positive characteristic},
   JOURNAL = {arXiv:1706.03204},
      YEAR = {2017},
    }

\bib{HX15}{article}{
    AUTHOR = {Hacon, C.}
    AUTHOR={Xu, C.},
     TITLE = {On the three dimensional minimal model program in positive
              characteristic},
   JOURNAL = {J. Amer. Math. Soc.},
  FJOURNAL = {Journal of the American Mathematical Society},
    VOLUME = {28},
      YEAR = {2015},
    NUMBER = {3},
     PAGES = {711--744},
      ISSN = {0894-0347},
   MRCLASS = {14E30 (13A35)},
  MRNUMBER = {3327534},
MRREVIEWER = {Mihnea Popa},
       URL = {https://doi.org/10.1090/S0894-0347-2014-00809-2},
}

\bib{HL10}{book}{
    AUTHOR = {Huybrechts, D.}
    AUTHOR={Lehn, M.},
     TITLE = {The geometry of moduli spaces of sheaves},
     publisher={Cambridge University Press},
   place={The Edinburgh Building, Cambridge},
   year={2010},
   pages={345},
      ISBN = {ISBN-13 978-0-521-13420-0},
}

\bib{Ill82}{article}{
   author={Illusie, L.},
   title={Finiteness, duality, and K\"unneth theorems in the cohomology of the
   de Rham-Witt complex},
   conference={
      title={Algebraic geometry},
      address={Tokyo/Kyoto},
      date={1982},
   },
   book={
      series={Lecture Notes in Math.},
      volume={1016},
      publisher={Springer, Berlin},
   },
   date={1983},
   pages={20--72},
}

\bib{Kat70}{article}{
    AUTHOR = {Katz, N.},
     TITLE = {Nilpotent connections and the monodromy theorem: applications of a result Turrittin},
     JOURNAL = {Inst. Hautes \'Etudes Sci. Publ. Math.},
  FJOURNAL = {Institut des Hautes \'Etudes Scientifiques. Publications
              Math\'ematiques},
    VOLUME = {39},
      YEAR = {1970},
     PAGES = {175-232},
      ISSN = {0021-8693},
   MRCLASS = {14F05 (16S38)},
  MRNUMBER = {1953719},
MRREVIEWER = {Michel Van den Bergh},
       URL = {https://doi.org/10.1016/S0021-8693(02)00557-4},
}

\bib{Keeler03}{article}{
    AUTHOR = {Keeler, D.},
     TITLE = {Ample filters of invertible sheaves},
   JOURNAL = {J. Algebra},
  FJOURNAL = {Journal of Algebra},
    VOLUME = {259},
      YEAR = {2003},
    NUMBER = {1},
     PAGES = {243--283},
      ISSN = {0021-8693},
   MRCLASS = {14F05 (16S38)},
  MRNUMBER = {1953719},
MRREVIEWER = {Michel Van den Bergh},
       URL = {https://doi.org/10.1016/S0021-8693(02)00557-4},
}

\bib{Keeler08}{article}{
    AUTHOR = {Keeler, D.},
     TITLE = {Fujita's conjecture and {F}robenius amplitude},
   JOURNAL = {Amer. J. Math.},
  FJOURNAL = {American Journal of Mathematics},
    VOLUME = {130},
      YEAR = {2008},
    NUMBER = {5},
     PAGES = {1327--1336},
 }

\bib{KM98}{book}{
   author={Koll{\'a}r, J.},
   author={Mori, S.},
   title={Birational geometry of algebraic varieties},
   series={Cambridge Tracts in Mathematics},
   volume={134},
   note={With the collaboration of C. H. Clemens and A. Corti;
   Translated from the 1998 Japanese original},
   publisher={Cambridge University Press},
   place={Cambridge},
   date={1998},
   pages={viii+254},
}

\bib{Kol92}{article}{
AUTHOR={Koll\'ar\ et\ al., J.}
TITLE={Flips and abundance for algebraic threefolds}
      NOTE = {   Ast\'erisque No. 211 (1992)},
 PUBLISHER = {Soci\'et\'e Math\'ematique de France, Paris},
      YEAR = {1992},
     PAGES = {1--258},
   }
	
 \bib{Kol96}{book}{,
    AUTHOR = {Koll\'ar, J.},
     TITLE = {Rational curves on algebraic varieties},
    SERIES = {Ergebnisse der Mathematik und ihrer Grenzgebiete. 3. Folge. A
              Series of Modern Surveys in Mathematics},
    VOLUME = {32},
 PUBLISHER = {Springer-Verlag, Berlin},
      YEAR = {1996},
     PAGES = {viii+320},
   }

 \bib{Kol13}{book}{
    AUTHOR = {Koll\'ar, J.},
     TITLE = {Singularities of the minimal model program},
    series= {Cambridge Tracts in Mathematics},
    VOLUME = {200},
      NOTE = {With a collaboration of S{\'a}ndor Kov{\'a}cs},
 PUBLISHER = {Cambridge University Press},
   ADDRESS = {Cambridge},
      YEAR = {2013},
   }

\bib{KS10}{article}{
   author={Kerz, M.},
   author={Schmidt, A.},
   title={On different notions of tameness in arithmetic geometry},
   journal={Math. Ann.},
   volume={346},
   date={2010},
   number={3},
   pages={641--668},

}

\bib{Laz04}{book}{
    AUTHOR = {Lazarsfeld, R.},
     TITLE = {Positivity in algebraic geometry. {I}},
    SERIES = {Ergebnisse der Mathematik und ihrer Grenzgebiete. 3. Folge. A
              Series of Modern Surveys in Mathematics [Results in
              Mathematics and Related Areas. 3rd Series. A Series of Modern
              Surveys in Mathematics]},
    VOLUME = {48},
      NOTE = {Classical setting: line bundles and linear series},
 PUBLISHER = {Springer-Verlag, Berlin},
      YEAR = {2004},
     PAGES = {xviii+387},
      ISBN = {3-540-22533-1},
   MRCLASS = {14-02 (14C20)},
  MRNUMBER = {2095471},
MRREVIEWER = {Mihnea Popa},
       URL = {https://doi.org/10.1007/978-3-642-18808-4},
}

\bib{LS77}{article}{
    AUTHOR = {Lange, H.},
    AUTHOR = {Stuhler, U},
     TITLE = {Vektorbundel auf Kurven und Darstellungen der algebraischen
Fundamentalgruppe},
   JOURNAL = {Math. Z.},
    VOLUME = {156},
      YEAR = {1977},
    NUMBER = {1},
     PAGES = {73--83},
}

\bib{Lan04}{article}{
    AUTHOR = {Langer, A.},
     TITLE = {Semistable sheaves in positive characteristic},
   JOURNAL = {Ann. of Math. (2)},
  FJOURNAL = {Annals of Mathematics. Second Series},
    VOLUME = {159},
      YEAR = {2004},
    NUMBER = {1},
     PAGES = {251--276},
      ISSN = {0003-486X},
   MRCLASS = {14F05 (14D20 14J60)},
  MRNUMBER = {2051393},
MRREVIEWER = {Vikram B. Mehta},
       URL = {https://doi.org/10.4007/annals.2004.159.251},
}

\bib{Lan09}{incollection}{
    AUTHOR = {Langer, A.},
     TITLE = {Moduli spaces of sheaves and principal {$G$}-bundles},
 BOOKTITLE = {Algebraic geometry---{S}eattle 2005. {P}art 1},
    SERIES = {Proc. Sympos. Pure Math.},
    VOLUME = {80},
     PAGES = {273--308},
 PUBLISHER = {Amer. Math. Soc., Providence, RI},
      YEAR = {2009},
}

\bib{Lan11}{article}{
    AUTHOR = {Langer, A.},
     TITLE = {On the {S}-fundamental group scheme},
   JOURNAL = {Ann. Inst. Fourier (Grenoble)},
  FJOURNAL = {Universit\'e de Grenoble. Annales de l'Institut Fourier},
    VOLUME = {61},
      YEAR = {2011},
    NUMBER = {5},
     PAGES = {2077--2119 (2012)},
      ISSN = {0373-0956},
   MRCLASS = {14F35 (14L15)},
  MRNUMBER = {2961849},
MRREVIEWER = {Andy R. Magid},
       URL = {https://doi.org/10.5802/aif.2667},
}

\bib{Lan12}{article}{
    AUTHOR = {Langer, A.},
     TITLE = {On the {S}-fundamental group scheme. {II}},
   JOURNAL = {J. Inst. Math. Jussieu},
  FJOURNAL = {Journal of the Institute of Mathematics of Jussieu. JIMJ.
              Journal de l'Institut de Math\'ematiques de Jussieu},
    VOLUME = {11},
      YEAR = {2012},
    NUMBER = {4},
     PAGES = {835--854},
    }

\bib{Lan15}{article}{
    AUTHOR = {Langer, A.},
     TITLE = {Generic positivity and foliations in positive characteristic},
   JOURNAL = {Adv. Math.},
  FJOURNAL = {Advances in Mathematics},
    VOLUME = {277},
      YEAR = {2015},
     PAGES = {1--23},
}

     \bib{Miy88a}{article}{
    AUTHOR = {Miyaoka, Y.},
     TITLE = {On the {K}odaira dimension of minimal threefolds},
   JOURNAL = {Math. Ann.},
  FJOURNAL = {Mathematische Annalen},
    VOLUME = {281},
      YEAR = {1988},
    NUMBER = {2},
     PAGES = {325--332},
      ISSN = {0025-5831},
}

\bib{PW17}{article}{
  AUTHOR = {Patakfalvi, Z.},
   AUTHOR = {Waldron, J.},
     TITLE = {Singularities of general fibers and the LMMP},
   JOURNAL = { arXiv:1708.04268},
      YEAR = {2017},
    }

\bib{Ser58}{article}{
    AUTHOR = {Serre, J.},
     TITLE = {Sur la topologie des vari\'et\'es alg\'ebriques en caract\'eristique
              {$p$}},
 BOOKTITLE = {Symposium internacional de topolog\'\i a algebraica
              {I}nternational symposium on algebraic topology},
     PAGES = {24--53},
 PUBLISHER = {Universidad Nacional Aut\'onoma de M\'exico and UNESCO, Mexico
              City},
      YEAR = {1958},
   MRCLASS = {14.00},
  MRNUMBER = {0098097},
MRREVIEWER = {M. Rosenlicht},
}

\bib{Szp79}{incollection}{
    AUTHOR = {Szpiro, L.},
     TITLE = {Sur le th\'eor\`eme de rigidit\'e de {P}arsin et {A}rakelov},
 BOOKTITLE = {Journ\'ees de {G}\'eom\'etrie {A}lg\'ebrique de {R}ennes ({R}ennes,
              1978), {V}ol. {II}},
    SERIES = {Ast\'erisque},
    VOLUME = {64},
     PAGES = {169--202},
 PUBLISHER = {Soc. Math. France, Paris},
      YEAR = {1979},
}

\bib{Tan15}{article}{
    AUTHOR = {Tanaka, H.},
     TITLE = {The {X}-method for klt surfaces in positive characteristic},
   JOURNAL = {J. Algebraic Geom.},
  FJOURNAL = {Journal of Algebraic Geometry},
    VOLUME = {24},
      YEAR = {2015},
    NUMBER = {4},
     PAGES = {605--628},
  }

\bib{Tan15b}{article}{
    AUTHOR = {Tanaka, H.},
     TITLE = {Abundance theorem for surfaces over imperfect fields},
   JOURNAL = { arXiv:1502.01383},
       YEAR = {2015},
    }

\bib{Wit17}{article}{
    AUTHOR = {Witaszek, J.},
     TITLE = {On the canonical bundle formula for fibrations of relative dimension one in positive characteristic},
   JOURNAL = {arXiv:1711.04380},
       YEAR = {2017},
    }

\bib{Xu14}{article}{
  AUTHOR = {Xu, C.},
     TITLE = {Finiteness of algebraic fundamental groups},
   JOURNAL = {Compos. Math.},
  FJOURNAL = {Compositio Mathematica},
    VOLUME = {150},
      YEAR = {2014},
    NUMBER = {3},
     PAGES = {409--414},
}

\bib{Xu15}{article}{
  AUTHOR = {Xu, C.},
     TITLE = {On the base-point-free theorem of 3-folds in positive
              characteristic},
   JOURNAL = {J. Inst. Math. Jussieu},
  FJOURNAL = {Journal of the Institute of Mathematics of Jussieu. JIMJ.
              Journal de l'Institut de Math\'ematiques de Jussieu},
    VOLUME = {14},
      YEAR = {2015},
    NUMBER = {3},
     PAGES = {577--588},
      ISSN = {1474-7480},
  }

\bib{Zhang16}{article}{
  AUTHOR = {Zhang, L.},
     TITLE = { Abundance for non-uniruled 3-folds with non-trivial Albanese maps in positive characteristics},
   JOURNAL = {to appear in J. Lond. Math. Soc., arXiv:1610.03637},
      YEAR = {2016},
    }

\bib{Zhang17}{article}{
  AUTHOR = {Zhang, L.},
     TITLE = { Abundance for 3-folds with non-trivial Albanese maps in positive characteristic},
   JOURNAL = {arXiv: 1705.00847},
      YEAR = {2017},
    }

\end{biblist}
\end{bibdiv}

\end{document}